\newtheorem{theorem}{Theorem}[section]
\newtheorem{lemma}[theorem]{Lemma}
\newtheorem{proposition}[theorem]{Proposition}
\newtheorem{corollary}[theorem]{Corollary}
\theoremstyle{definition}
\theoremstyle{definitions}
\newtheorem{definition}[theorem]{Definition}
\newtheorem{remark}[theorem]{Remark}
\newtheorem{example}[theorem]{Example}
\theoremstyle{notations}
\theoremstyle{remarks}
\journal{ }
\begin{document}

\begin{frontmatter}



\title{On Subgroups of Topologized Fundamental Groups and Generalized Coverings}


\author[]{Mehdi~Abdullahi Rashid}
\ead{mbinev@stu.um.ac.ir}
\author[]{Behrooz~Mashayekhy\corref{cor1}}
\ead{bmashf@um.ac.ir}
\author[]{Hamid~Torabi}
\ead{h.torabi@ferdowsi.um.ac.ir}
\author[]{Seyyed Zeynal~Pashaei}
\ead{pashaei.seyyedzeynal@stu.um.ac.ir}
\address{Department of Pure Mathematics, Center of Excellence in Analysis on Algebraic Structures, Ferdowsi University of Mashhad,\\
P.O.Box 1159-91775, Mashhad, Iran.}
\cortext[cor1]{Corresponding author}
\begin{abstract}
In this paper,  we are interested in study subgroups of topologized fundamental groups and their influences on generalized covering maps.
More precisely, we find some relationships between generalized covering subgroups and the other famous subgroups of the fundamental group equipped with
the compact-open topology and the whisker topology. Moreover, we present some conditions under which generalized coverings, semicoverings and coverings
are equal.
\end{abstract}

\begin{keyword}
Generalized covering\sep Semicovering\sep Generalized covering subgroup\sep Quasitopological fundamental group\sep Whisker topology\sep Semilocally small generated\sep Homotopically Hausdorff.
\MSC[2010]{57M10, 57M12, 57M05, 55Q05}

\end{keyword}

\end{frontmatter}


\section{Introduction and motivation}

We recall that a continuous map $p:\tilde{X}\rightarrow X$ is a covering map if every point of $X$ has an open neighborhood which is evenly covered by $p$. It is well-known that the induced
homomorphism $p_*:\pi_1(\tilde{X},\tilde{x})\rightarrow \pi_1(X,x)$ is a monomorphism and so $\pi_1(\tilde{X},\tilde{x})\cong p_*\pi_1(\tilde{X},\tilde{x})$ is a subgroup of $\pi_1(X,x)$. Based on some recent works of \cite{11,15,17,18,23} there is a chain of some effective subgroups of the fundamental group $\pi_1(X,x)$ as follows:
$$\{e\}\leq \pi^s_1(X,x)\leq \pi^{sg}_1(X,x)\leq \overline{{\ \pi }^{sg}_1(X,x_0)}\leq {\widetilde\pi}^{sp}_1(X,x)$$ $$\leq \pi^{sp}_1(X,x)\leq p_*\pi_1(\tilde{X},\tilde{x})\leq \pi_1(X,x),\ \ \ \ \ \ \  (*)$$
where ${\pi }^s_1(X,x)$ is the subgroup of all small loops at $x$ \cite{23}, ${\pi }^{sg}_1(X,x)$ is the subgroup of all small generated loops, i.e, the subgroup generated by the set $\{[\alpha *\beta *{\alpha }^{-1}]| [\beta]\in {\pi }^s_1(X,\alpha(1)),\alpha \in P(X,x)\}$ where $\ P(X,x)$ is the space of all paths with initial point $x$ \cite{23}, ${\pi }^{sp}_1(X,x)$ is the Spanier group of $X$, the intersection of the Spanier subgroups relative to open covers of $X$ \cite[Definition 2.3]{24}, and ${\widetilde{\pi }}^{sp}_1(X,x)$ is the path Spanier group, i.e, the intersection of all path Spanier subgroups ${\widetilde{\pi }}_1(V,x)$ where $V$ is a path open cover of $X$ \cite[Section 3]{16}.

Some people extended the notion of covering maps and introduced rigid coverings \cite{2}, semicoverings \cite{5}, generalized coverings \cite{3,11}. These generalizations focus on keeping some properties of covering maps and eliminating the evenly covered property. Brazas \cite{5} defined semicoverings by removing the evenly covered property and keeping local homeomorphism and the unique path and homotopy lifting properties. For generalized coverings, the local homeomorphism is replaced with the unique lifting property (see \cite{3,11}). In each case, one of the interesting problems is to classify subgroups of the fundamental group with respect to coverings, semicoverings and generalized coverings. A subgroup $H$ of the fundamental group ${\pi }_1(X,x)$ is called covering, semicovering, generalized covering subgroup if there is a covering, semicovering, generalized covering map $p:(\tilde{X},\tilde{x})\rightarrow (X,x)$ such that ${H=p_*\pi }_1(\tilde{X},\tilde{x})$, respectively.

Brazas \cite[Theorem 2.36]{3}  showed that the intersection of any collection of generalized covering subgroups of ${\pi }_1(X,x)$ is also a generalized covering subgroup. We give another proof for the above fact in Corollary 2.11 which is simpler than that of Brazas and denote the intersection of all generalized covering subgroups of ${\pi }_1(X,x)$ by ${\pi }^{gc}_1(X,x)$. In Section 2, we find the location of the subgroup ${\pi }^{gc}_1(X,x)$ in the chain $(*)$. In fact, we prove that ${\pi }^{sg}_1(X,x)\leq {\pi }^{gc}_1(X,x)\leq \overline{{\pi }^{sg}_1(X,x)}$ for a locally path connected space $X$. Note that $\overline{{\pi }^{sg}_1(X,x)}$ is the topological closure of the subgroup ${\pi }^{gc}_1(X,x)$ in the quasitopological fundamental group ${\pi }^{qtop}_1(X,x)$. By the quasitopological fundamental group ${\pi }^{qtop}_1(X,x)$ we mean the fundamental group ${\pi }_1(X,x)$ equipped with the quotient topology induced by the compact-open topology (see \cite{2,Br4}). Moreover, we give some examples in which ${\pi }^{sg}_1(X,x)\neq {\pi }^{gc}_1(X,x)$ and ${\pi }^{gc}_1(X,x)\neq {\pi }^{sp}_1(X,x)$.

It seems interesting to find equivalent conditions for a subgroup $H$ of the fundamental group ${\pi }_1(X,x)$ to be a covering, semicovering or generalized covering subgroup. Based on some classical results of Spanier \cite{21} every covering subgroup contains the Spanier group ${\pi }^{sp}_1(X,x)$. Hence for every subgroup $H\leq {\pi }_1(X,x)$ if ${\pi }^{sp}_1(X,x)\cap H\neq {\pi }^{sp}_1(X,x)$, then $H$ can not be a covering subgroup. If ${\pi }^{sp}_1(X,x)\cap H= {\pi }^{sp}_1(X,x)$, then by some results in \cite{24,15} $H$ is a covering subgroup if and only if $H$ contains an open normal subgroup of ${\pi }^{qtop}_1(X,x)$. For semicovering subgroups, based on some results in \cite{6,16}, one can state a similar result, i.e, if ${\widetilde{\pi }}^{sp}_1(X,x)\cap H\neq {\widetilde{\pi }}^{sp}_1(X,x)$, then $H$ can not be a semicovering subgroup. For locally path connected spaces, if ${\widetilde{\pi }}^{sp}_1(X,x)\cap H={\widetilde{\pi }}^{sp}_1(X,x)$, then $H$ is a semicovering subgroup if and only if $H$ is an open subgroup of ${\pi }^{qtop}_1(X,x)$. In Section 2, we intend to give a similar result for generalized covering subgroups. In fact, we show that if ${\pi }^{gc}_1(X,x)\cap H\neq {\pi }^{gc}_1(X,x)$, then $H$ is not a generalized covering subgroup. If ${\pi }^{gc}_1(X,x)\cap H={\pi }^{gc}_1(X,x)$, then $H$ is a generalized covering subgroup if and only if $H={(p_H)}_*{\pi }_1({\tilde{X}}_H,{\tilde{e}}_H)$, where $p_H:{\tilde{X}}_H\rightarrow X$ is the end point projection (see Definition 2.7 for the definition of $p_H:{\tilde{X}}_H\rightarrow X$). Note that Brazas \cite[Lemma 5.9]{3} gave two equivalent conditions for the equality $H={(p_H)}_*{\pi }_1({\tilde{X}}_H,{\tilde{e}}_H)$.

It is easy to show that the class of all coverings, semicoverings, generalized coverings on $X$ forms a category denoted by $\mathrm{Cov}(X)$, $\mathrm{SCov}(X)$, $\mathrm{GCov}(X)$, respectively. By definition, $\mathrm{Cov}(X)$ is a subcategory of $\mathrm{SCov}(X)$. Brazas \cite{3,6} showed that $\mathrm{Cov}(X)=\mathrm{SCov}(X)=\mathrm{GCov}(X)$ for locally path connected, semi locally simply connected space $X$. Torabi et al. \cite{19} showed that the first equality can be extended for locally path connected, semilocally small generated spaces, i.e, $\mathrm{Cov}(X)=\mathrm{SCov}(X)$. In Section 2, we show that $\mathrm{SCov}(X)=\mathrm{GCov}(X)$ for locally path connected, semilocally small generatedted space $X$.

Spanier \cite[Theorem 13 page 82]{21} introduced a different topology on the fundamental group which has been called the whisker topology by Brodskiy et al. \cite{8} and denoted by $\pi_{1}^{wh}(X,x_{0})$. They showed that $\pi_{1}^{wh}(X,x_{0})$ is a topological group when the inverse map is continuous \cite[Proposition 4.20]{8}. Although $\pi_{1}^{wh}(X,x_{0})$ is not a quasitopological group, in general, we show that it is a homogenous space (see Proposition 3.2). In Section 3, after recalling the whisker topology and presenting some of its properties, we intend to describe its influence on the notion of generalized covering subgroups. Trying to classify the generalized covering subgroups of the fundamental group, we give an example to show that the whisker topology is not suitable for the subject. Moreover, we find some properties of the whisker topology on the qoutient space $\pi_{1}^{wh}(X,x_{0})/H$, where $H$ is a subgroup of $\pi_{1}(X,x_{0})$.

In Section 4, we introduce some topological properties in order to preserve the categorical equality between $\mathrm{Cov}(X)$, $\mathrm{SCov}(X)$ and $\mathrm{GCov}(X)$. More precisely, we introduce the notions {\it semilocally path $H$-connectedness} and {\it semilocally $H$-connectedness} and give their relations to open subgroups of ${\pi }^{qtop}_1(X,x)$ and $\pi_{1}^{wh}(X,x_{0})$ in order to show that $\mathrm{SCov}(X)=\mathrm{GCov}(X)$ if and only if $X$ is a semilocally path $H$-connected, and $\mathrm{GCov}(X)=\mathrm{Cov}(X)$ if and only if $X$ is a semilocally $H$-connected when $H=\pi_{1}^{gc}(X,x_{0})$.

Throughout the paper, the topological space $X$ is assumed to be connected and locally path connected.

\section{Generalized Covering Subgroups of the Quasitopological Fundamental Group}

Universal generalized covering maps was introduced by Fischer and Zastrow \cite{11} and extended to generalized covering maps by Brazas \cite{3}. The definition is based on removing the evenly covered property and keeping the unique lifting property from classical covering maps.
\begin{definition} A continuous map $p:(\tilde{X},{\tilde{x}}_0)\rightarrow (X,x_0)$ has the unique lifting property (UL for abbreviation) if for every connected, locally path connected space$\ (Y,y_0)$ and every continuous map $f:(Y,y_0)\rightarrow (X,x_0)$ with
$f_*{\pi }_1(Y,y_0)\subseteq p_*{\pi }_1(\tilde{X},{\tilde{x}}_0)$ for ${\tilde{x}}_0\in p^{-1}(x_0)$, there exists a unique continuous map $\tilde{f}:(Y,y_0)\rightarrow (\tilde{X},{\tilde{x}}_0)$ with $p\circ\tilde{f}=f$. If $\tilde{X}$ is a connected, locally path connected space and $p:\tilde{X}\rightarrow X$ has UL property, then $p$ and $\tilde{X}$ are called a generalized covering map and a generalized covering space for $X$, respectively.
\end{definition}

\begin{definition}
A subgroup ${H\le \pi }_1(X,x_0)$ is called a generalized covering subgroup of ${\pi }_1(X,x_0)$ if there is a generalized covering $p:(\tilde{X},{\tilde{x}}_0)\rightarrow (X,x_0)$ such that ${H=p_*\pi }_1(\tilde{X},{\tilde{x}}_0)$ (see \cite {3}). Brazas \cite [Theorem 2.36]{3} showed that the intersection of any collection of generalized covering subgroups of ${\pi }_1(X,x_0)$ is also a generalized covering subgroup. We denote the intersection of all generalized covering subgroups of ${\pi }_1(X,x_0)$ by \textit{$\pi_{1}^{gc}(X,x_0)$ }.
\end{definition}

In the following proposition, we show that ${\pi }^{gc}_1(X,x_0)$ contains $ {\pi }^{sg}_1(X,x_0) $ as a subgroup of ${\pi }_1(X,x_0)$.
\begin{proposition}
For a pointed topological space $(X,x_0)$, we have
$${ {\pi }^{sg}_1(X,x_0)\le \pi }^{gc}_1(X,x_0).$$
\end{proposition}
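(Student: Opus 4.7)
The plan is to reduce the claim to showing $\pi^{sg}_1(X,x_0)\le H$ for every generalized covering subgroup $H=p_*\pi_1(\tilde X,\tilde x_0)$, since $\pi^{gc}_1(X,x_0)$ is by definition the intersection of all such $H$. So I fix a generalized covering $p:(\tilde X,\tilde x_0)\to(X,x_0)$ and a typical generator $[\alpha*\beta*\alpha^{-1}]$ of $\pi^{sg}_1(X,x_0)$, with $\alpha\in P(X,x_0)$ ending at $y$ and $[\beta]\in\pi^{s}_1(X,y)$, and aim to show this class lies in $H$.

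First I use the UL property to lift the path $\alpha$ to a path $\tilde\alpha$ in $\tilde X$ starting at $\tilde x_0$ and ending at some $\tilde y\in p^{-1}(y)$; similarly I lift $\beta$ to $\tilde\beta$ starting at $\tilde y$ and ending at some $\tilde z\in p^{-1}(y)$. Concatenating yields a lift $\tilde\alpha*\tilde\beta*\tilde\alpha^{-1}$ of $\alpha*\beta*\alpha^{-1}$ at $\tilde x_0$; this lift is a loop precisely when $\tilde z=\tilde y$, which by the usual covering-theoretic correspondence is equivalent to $[\alpha*\beta*\alpha^{-1}]\in H$. Thus the proof reduces to showing that the lift at $\tilde y$ of a small loop at $y$ is itself a loop.

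For this I plan to use the small-loop property of $\beta$: for every open neighbourhood $U$ of $y$ there is a loop $\beta_U$ in $U$ at $y$ with $[\beta_U]=[\beta]$ in $\pi_1(X,y)$. Path-homotopic paths lift to paths with the same endpoint, so the lift of $\beta_U$ at $\tilde y$ also terminates at $\tilde z$. Since that lift has image in $p^{-1}(U)$ and starts at $\tilde y$, it is contained in the path component $\tilde V_U$ of $\tilde y$ in $p^{-1}(U)$; local path connectedness of $\tilde X$ makes $\tilde V_U$ an open neighbourhood of $\tilde y$. Consequently $\tilde z\in\bigcap_{U\ni y}\tilde V_U$.

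The main obstacle I expect is deducing $\tilde z=\tilde y$ from this containment. For this I would appeal to the fact, implicit in Brazas' construction of generalized coverings via the endpoint projection $p_H:\tilde X_H\to X$ to be introduced in Definition~2.7, that the sets $\tilde V_U$ form a neighbourhood basis at $\tilde y$: any generalized covering is equivalent over $X$ to $p_H$ for its own characteristic subgroup $H$, and the standard basic open sets of $\tilde X_H$ at a point $[\tilde\alpha]$ are exactly these $\tilde V_U$. Combined with the topological separation of distinct points of $p^{-1}(y)$ in $\tilde X_H$, this forces $\bigcap_U\tilde V_U\cap p^{-1}(y)=\{\tilde y\}$, which yields $\tilde z=\tilde y$ and hence $[\alpha*\beta*\alpha^{-1}]\in H$.
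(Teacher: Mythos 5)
Your reduction to showing $\pi^{sg}_1(X,x_0)\le H$ for each generalized covering subgroup $H$, and your translation of the problem into the question of whether the lift of a small loop based at $\tilde y$ returns to $\tilde y$, are both sound, and in substance this is the same argument as the paper's: the paper works downstairs with the property ``homotopically Hausdorff relative to $H$'', while you work upstairs in $\tilde X_H$ with the whisker-topology basis, and these are the same condition in two languages, since a basic neighbourhood $(U,\langle\alpha\rangle_H)$ meets $p_H^{-1}(y)$ exactly in the classes $\langle\alpha*\gamma\rangle_H$ with $\gamma$ a loop in $U$ at $y$.

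The gap is the final step, ``the topological separation of distinct points of $p^{-1}(y)$ in $\tilde X_H$'', which you assert without justification. This is not a formal consequence of the construction of $\tilde X_H$: for an arbitrary subgroup $H$ the fibre of $p_H$ need not be $T_1$ in the whisker topology (take $H=1$ for the Harmonic Archipelago at its special point: any nontrivial small loop $\beta$ gives a point $\langle\beta\rangle_1\ne e_1$ of the fibre lying in every basic neighbourhood of $e_1$). The separation of fibre points is precisely the statement that $X$ is homotopically Hausdorff relative to $H$, and the fact that this holds whenever $H$ is a generalized covering subgroup is exactly the nontrivial input the paper imports from \cite[Proposition 6.4]{11} together with \cite[Lemma 5.9]{3}: unique path lifting for $p_H$ forces the fibres to be separated. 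If you add that citation, or reprove it (say, by manufacturing two distinct lifts of a path from a non-separated pair of fibre points), your argument closes; as written, the crucial step is assumed rather than proved. Two smaller points: the concatenation $\tilde\alpha*\tilde\beta*\tilde\alpha^{-1}$ is not defined until you already know $\tilde z=\tilde y$ (the third factor should be the lift of $\alpha^{-1}$ starting at $\tilde z$), and ``path-homotopic paths lift to paths with the same endpoint'' uses homotopy lifting, which for generalized coverings should be justified by applying the UL property to $I\times I$ rather than quoted as standard covering-space theory.
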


\begin{proof} Let $g$ be any generator of $ {\pi }^{sg}_1(X,x_0) $. Then $g=[\alpha *\omega *{\alpha }^{-1}]$, where $\alpha $ is a path beginning at $x_0$ and $\omega $ is a small loop at $\alpha(1)$. Suppose $g\notin H$ for a generalized covering subgroup $H$ of ${\pi }_1(X,x_0)$. Since $H$ is a generalized covering subgroup of ${\pi }_1(X,x_0)$, by \cite [Proposition 6.4] {11} and \cite[Lemma 5.9]{3}, $X$ is homotopically Hausdorff relative to $H$ and so there exists an open neighborhood $U_g$ of $\alpha(1)$ such that there is no loop $\gamma :(I,\dot{I})\rightarrow (U_g,\alpha (1))$ with $[\alpha *\gamma *{\alpha }^{-1}]\in Hg$. Since $\omega $ is a small loop at $\alpha(1)$, there exists ${\omega }^{'}:(I,\dot{I})\rightarrow (U_g,\alpha (1))$ such that $[\omega ]=[{\omega }']$. Thus $g=[\alpha *\omega *{\alpha }^{-1}]=[\alpha *{\omega }^{'}\ast{\alpha }^{-1}]$ which implies that $[\alpha *{\omega }^{'}*{\alpha }^{-1}]\in Hg$. This is a contradiction to $X$ being homotopically Hausdorff relative to $H$. Hence ${\pi }^{sg}_1(X,x_0)\le H$ and so ${\pi}^{sg}_1(X,x_0)\le {\pi}^{gc}_1(X,x_0)$.
\end{proof}

In the case of locally path connected, semilocally simply connected spaces, equality of subgroups in the chain $ (*) $ holds since one can conclude that $ {\pi }^{sp}_1(X,x_0)=1 $. This result comes from the discreteness of ${\pi }^{qtop}_1(X,x_0)$ which supports the existence of the classical universal covering space. Note that for $ {\pi }^{sp}_1(X,x_0)=1 $, the semilocally simply connectedness is not a necessary condition. For instance, Fischer and Zastrow \cite [Example 4.15] {11} showed that the Spanier group of the Hawaiian earring, HE, is trivial which implies the existence of the generalized universal covering space for HE, i.e, $ {\pi }^{gc}_1(HE,x_0)=1 $, where $ x_0 $ is the wedge point. Using this fact and the chain $ (*) $ it is easy to see that HE is a homotopically Hausdorff space and $ {\pi }^{sg}_1(HE,x_0)={\pi }^{gc}_1(HE,x_0)=1 $.
\begin{example}
To present an example to show that $ {\pi }^{sg}_1(X,x_0) $ may be a proper subgroup of $ {\pi }^{gc}_1(X,x_0) $, consider the space $ RX $  described in \cite[Definition 7]{22}. By \cite [Theorem 16]{22}, $ RX $ is a metric, path connected, locally path connected and homotopically Hausdorff space and so by \cite{24,23} $ {\pi }^{sg}_1(X,x_0)=1 $, which does not admit a generalized universal covering space, i.e, $ {\pi }^{gc}_1(X,x_0)\neq1 $.
\end{example}

In the following theorem, using the previous proposition, we find the location of the subgroup ${\pi }^{gc}_1(X,x_0)$ in the chain  $ (*) $ of some interesting subgroups of ${\pi }_1(X,x_0)$.
\begin{theorem}
If $(X,x_0)$ is a locally path connected space, then there exsits the following chain of subgroups of ${\pi }_1(X,x_0)$.
$$\{e\}\leq {\pi }^s_1(X,x_0)\leq {\pi }^{sg}_1(X,x_0)\le \pi ^{gc}_1(X,x_0)\le \overline{{\ \pi }^{sg}_1(X,x_0)}$$ $$\leq {\widetilde{\pi }}^{sp}_1(X,x_0)\leq {\pi }^{sp}_1(X,x_0)\leq {\pi }_1(X,x_0). $$
\end{theorem}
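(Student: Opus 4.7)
The plan is to assemble the chain by combining two ingredients: the already-established chain $(*)$ from the literature cited in the introduction (which supplies
$$\{e\} \le \pi^s_1(X,x_0) \le \pi^{sg}_1(X,x_0) \le \overline{\pi^{sg}_1(X,x_0)} \le \widetilde{\pi}^{sp}_1(X,x_0) \le \pi^{sp}_1(X,x_0) \le \pi_1(X,x_0)$$
for locally path connected $X$), together with the insertion of $\pi^{gc}_1(X,x_0)$ at its correct position between $\pi^{sg}_1(X,x_0)$ and $\overline{\pi^{sg}_1(X,x_0)}$. Thus only two inclusions require new work: $\pi^{sg}_1(X,x_0) \le \pi^{gc}_1(X,x_0)$, which is exactly Proposition 2.3, and $\pi^{gc}_1(X,x_0) \le \overline{\pi^{sg}_1(X,x_0)}$. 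First I would dispose of all the recycled inclusions by explicit reference to $(*)$ and Proposition 2.3.

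For the remaining inclusion $\pi^{gc}_1(X,x_0) \le \overline{\pi^{sg}_1(X,x_0)}$, my strategy is to exhibit $H := \overline{\pi^{sg}_1(X,x_0)}$ itself as a generalized covering subgroup of $\pi_1(X,x_0)$. Since $\pi^{gc}_1(X,x_0)$ is defined as the intersection of all generalized covering subgroups, once this is established the desired containment is immediate.

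To prove $H = \overline{\pi^{sg}_1(X,x_0)}$ is a generalized covering subgroup, I would form the standard endpoint projection $p_H:\tilde{X}_H \to X$ (as in Definition 2.7 and the Fischer--Zastrow construction) and verify both that $p_H$ has the unique lifting property and that $(p_H)_*\pi_1(\tilde{X}_H,\tilde{e}_H) = H$. Invoking the criterion referenced from \cite[Lemma 5.9]{3}, both requirements reduce to checking that $X$ is homotopically Hausdorff relative to $H$. The structure of this verification mirrors the proof of Proposition 2.3: given a coset representative $g$ with $[\alpha] \notin Hg$, one must produce an open neighborhood $U$ of $\alpha(1)$ such that no loop $\gamma \subseteq U$ at $\alpha(1)$ satisfies $[\alpha\ast\gamma\ast\alpha^{-1}] \in Hg$; local path connectedness of $X$ is what lets us move freely between paths, loops, and their small deformations inside $U$.

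The main obstacle is precisely this homotopically-Hausdorff-relative check for the \emph{topological closure} of $\pi^{sg}_1(X,x_0)$ rather than for $\pi^{sg}_1(X,x_0)$ itself. The delicate point is translating the statement ``$[\alpha] \in \overline{\pi^{sg}_1(X,x_0)}\cdot g$'' — purely about the compact-open quotient topology on $\pi^{qtop}_1(X,x_0)$ — back into the geometric language of loops supported in arbitrarily small neighborhoods of $\alpha(1)$. I would argue by contradiction: if no suitable $U$ existed, then arbitrarily small loops at $\alpha(1)$ would yield $\pi^{sg}_1$-conjugates lying in every basic neighborhood of $[\alpha]g^{-1}$ in $\pi^{qtop}_1(X,x_0)$, forcing $[\alpha]g^{-1} \in \overline{\pi^{sg}_1(X,x_0)} = H$ and contradicting $[\alpha] \notin Hg$. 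This continuity-of-concatenation argument is where local path connectedness of $X$ is essential, since it guarantees that the conjugation-by-$\alpha$ map respects the quasitopological structure well enough to transport convergence between cosets and their closures.
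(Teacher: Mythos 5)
Your decomposition is the same as the paper's: everything except the two inclusions involving $\pi^{gc}_1(X,x_0)$ comes from the chain $(*)$, the inclusion $\pi^{sg}_1(X,x_0)\le\pi^{gc}_1(X,x_0)$ is Proposition 2.3, and the remaining inclusion is obtained by showing that $\overline{\pi^{sg}_1(X,x_0)}$ is itself a generalized covering subgroup. The gap is in how you propose to show that last fact. You claim that, via \cite[Lemma 5.9]{3}, proving $H=\overline{\pi^{sg}_1(X,x_0)}$ is a generalized covering subgroup ``reduces to checking that $X$ is homotopically Hausdorff relative to $H$.'' That reduction is false: being homotopically Hausdorff relative to $H$ is a \emph{necessary} condition for $H$ to be a generalized covering subgroup, not a sufficient one. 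The paper's own Example 2.4 (the Virk--Zastrow space $RX$) is a metric, connected, locally path connected, homotopically Hausdorff space --- i.e.\ homotopically Hausdorff relative to the trivial subgroup --- for which the trivial subgroup is nevertheless not a generalized covering subgroup. So even if your continuity-of-concatenation argument succeeded in verifying the homotopically Hausdorff condition (and it is itself shaky, since the neighborhoods $[\alpha]\,i_*\pi_1(U,\alpha(1))\,[\alpha]^{-1}$ you would be manipulating are whisker-type neighborhoods, not basic neighborhoods in the quotient of the compact-open topology where the closure $\overline{\pi^{sg}_1(X,x_0)}$ is taken), the conclusion you need would not follow.

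The paper closes this step in one line with a different and stronger tool: by \cite[Theorem 11]{4} (Brazas--Fabel), for a locally path connected space every \emph{closed} subgroup of $\pi_1^{qtop}(X,x_0)$ is a generalized covering subgroup. Since $\overline{\pi^{sg}_1(X,x_0)}$ is a topological closure it is closed in $\pi_1^{qtop}(X,x_0)$, hence a generalized covering subgroup, hence contains $\pi^{gc}_1(X,x_0)$. (The paper phrases this via the identity $\overline{\{e\}}=\overline{\pi^{sg}_1(X,x_0)}$ from \cite[Theorem 2.2]{19} and applies the theorem to $\overline{\{e\}}$, but the point is the same.) To repair your argument, replace the homotopically-Hausdorff criterion with this closed-subgroup criterion; no further verification is then needed.
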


\begin{proof} Recall from \cite[Theorem 2.2]{19} that $\overline{\{e\}}= \overline{{\ \pi }^{sg}_1(X,x_0)}$. Using Proposition 2.3, it is enough to show that $ \pi ^{gc}_1(X,x_0)\le\overline{\{e\}} $. Since $ X $ is locally path connected and $ \overline{\{e\}} $ is a closed subgroup of ${\ \pi }^{qtop}_1(X,x_0)$, the result holds by \cite[Theorem 11]{4}.
\end{proof}

\begin{example}
 As an example of a locally path connected space which supports the inequality $ {\pi }^{gc}_1(X,x_0)\neq{\pi }^{sp}_1(X,x_0) $, consider the space $ A $ in \cite [Section 3]{9}. It consists of a rotated topologists' sine curve about its limiting arc (see \cite [Fig. 1]{9}), the "central axis", where this surface tends to, and a system of horizontal arc is attached to them so that they become countable dense of radial cross sections to make the space locally path connected at the central arc. Let $ a $ be any point of central axis. Fischer et al. \cite[Proposition 3.2]{24} showed that $ {\pi }^{sp}_1(A,a)\neq1 $ and $ A $ is a homotopically path Hausdorff space. Also, they proved that every homotopically path Hausdorff space has the  generalized universal covering space \cite[Proof of Theorem 2.9]{24}. It means that $ {\pi }^{gc}_1(A,a)=1 $.
\end{example}

In order to find equivalent conditions for a subgroup $H$ of the fundamental group ${\pi }_1(X,x_0)$ to be a generalized covering subgroup we need following concepts.
\begin{definition}
Let $H$ be a subgroup of ${\pi }_1(X,x_0)$ and $P(X,x_0)$ be the path space in $X$ beginning at $x_0$. Consider an equivalence relation $\sim_H$ on $P(X,x_0)$ as follows.

${\alpha }_1\sim_H {\alpha }_2$ if and only if ${\alpha }_1(1)={\alpha }_2(1)$ and $[{\alpha }_1*{{\alpha }_2}^{-1}]\in H$.
The equivalence class of $\alpha$ denoted by ${\left\langle \alpha \right\rangle }_H$.
One can define the quotient space $\tilde{X}_H=P(X,x_0)/\sim_H$ and the map $p_H: (\tilde{X}_H,e_H)\rightarrow (X,x_0)$ defined by ${\left\langle \alpha \right\rangle }_H\rightarrow \alpha (1)$, where $e_H$ is the class of constant path at $x_0$.

If $\alpha \in P(X,x_0)$ and $U$ is an open neighborhood of $\alpha (1)$, then a continuation of $\alpha$ in $U$ is a path $\beta=\alpha *\gamma $, where $\gamma $ is a path in $U$ with $\gamma (0)=\alpha (1)$. Put $(U,{\left\langle \alpha \right\rangle }_H) =\{{\left\langle \beta \right\rangle }_H\in {\tilde{X}}_H \ | \ \mathrm{\beta\ is\ a\ continuation\ of\ \alpha\ in\ U}\}$. It is well known that the subsets $(\langle U,{\left\langle \alpha \right\rangle }_H) $ form a basis for a topology on ${\tilde{X}}_H$ for which the function $p_H:{(\tilde{X}}_H,e_H)\rightarrow (X,x_0)$ is continuous (see \cite[Page 82]{21}). Brodskiy et al. \cite{8} called this topology on ${\tilde{X}}_H$ the whisker topology.
\end{definition}

Note that the map $p_H:{(\tilde{X}}_H,e_H)\rightarrow (X,x_0)$ has the path lifting property and if $X$ is path connected, then $p_H$ is surjective (see \cite[page 83]{21}).
Brazas \cite{3} gave the following relationship between generalized covering maps and the above concepts.
\begin{lemma}(\cite[Lemma 5.10]{3}).
Suppose $\hat{p}:(\hat{X},\hat{x})\rightarrow (X,x_0)$ is a generalized covering map with ${\hat{p}}_*({\pi }_1(\hat{X},\hat{x}))=H$. Then there is a homeomorphism $h:(\hat{X},\hat{x})\rightarrow ({\tilde{X}}_H,e_H)$ such that $p_H\circ h=\hat{p}$.
\end{lemma}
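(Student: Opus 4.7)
The plan is to construct an explicit map $h:(\hat X,\hat x)\to(\tilde X_H,e_H)$ using path lifting along $\hat p$ and to verify it is a homeomorphism compatible with the two projections.

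\emph{Construction and well-definedness.} Given $\hat x'\in\hat X$, use path-connectedness of $\hat X$ to choose a path $\tilde\alpha$ from $\hat x$ to $\hat x'$, and set $h(\hat x')=\langle\hat p\circ\tilde\alpha\rangle_H$. If $\tilde\alpha_1,\tilde\alpha_2$ are two such paths, the loop $\tilde\alpha_1*\tilde\alpha_2^{-1}$ at $\hat x$ projects to a loop whose class lies in $\hat p_*\pi_1(\hat X,\hat x)=H$, so $\hat p\circ\tilde\alpha_1\sim_H\hat p\circ\tilde\alpha_2$. The relation $p_H\circ h=\hat p$ is then immediate from $p_H(\langle\beta\rangle_H)=\beta(1)$.

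\emph{Bijectivity.} Applying the UL property to the simply connected domain $[0,1]$ yields unique path lifting along $\hat p$: every $\alpha\in P(X,x_0)$ has a unique lift $\tilde\alpha$ starting at $\hat x$, so $h(\tilde\alpha(1))=\langle\alpha\rangle_H$ and $h$ is surjective. For injectivity, suppose $h(\hat x_1)=h(\hat x_2)$ with paths $\tilde\alpha_i$ from $\hat x$ to $\hat x_i$ and $\alpha_i=\hat p\circ\tilde\alpha_i$. Then $\alpha_1(1)=\alpha_2(1)$ and the loop $\ell=\alpha_1*\alpha_2^{-1}$ represents a class of $H$, so there is a loop $\tilde\ell'$ at $\hat x$ with $\hat p\circ\tilde\ell'$ path-homotopic to $\ell$. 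Lifting this path homotopy via the UL property applied to the simply connected square $I^2$ forces the unique lift of $\ell$ starting at $\hat x$ to end at $\hat x$ as well. That lift is $\tilde\alpha_1*\eta$ where $\eta$ is the lift of $\alpha_2^{-1}$ at $\hat x_1$; since it returns to $\hat x$, the reverse $\eta^{-1}$ is a lift of $\alpha_2$ at $\hat x$ ending at $\hat x_1$, which by uniqueness must equal $\tilde\alpha_2$. Hence $\hat x_1=\hat x_2$.

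\emph{Continuity in both directions and main obstacle.} For continuity of $h$ at $\hat x'$ with $h(\hat x')=\langle\alpha\rangle_H$, given a basic whisker neighborhood $(U,\langle\alpha\rangle_H)$, local path connectedness of $\hat X$ supplies a path-connected open $V$ with $\hat x'\in V\subseteq\hat p^{-1}(U)$; any path in $V$ from $\hat x'$ to $\hat y\in V$ projects to a continuation of $\alpha$ in $U$, so $h(V)\subseteq(U,\langle\alpha\rangle_H)$. For continuity of $h^{-1}$, I claim that $h^{-1}\bigl((U,\langle\alpha\rangle_H)\bigr)$ equals the path component $W$ of $\hat p^{-1}(U)$ containing $\hat x'$: the inclusion $W\subseteq h^{-1}(\cdots)$ comes from concatenating $\tilde\alpha$ with a path in $W$, while the reverse uses path lifting to send a continuation $\alpha*\gamma$ to $\tilde\alpha*\tilde\gamma$ and then injectivity of $h$ to identify the preimage with $\tilde\gamma(1)\in W$. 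Local path connectedness of $\hat X$ makes $W$ open. The delicate step is injectivity: without a local homeomorphism one cannot compare fibres directly, and must combine unique path lifting with the lifting of a path homotopy $I^2\to X$ supplied by the UL property in order to force the lift of $\alpha_1*\alpha_2^{-1}$ to be a loop. Once this is settled the continuity assertions are routine.
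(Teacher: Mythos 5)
The paper does not actually prove this lemma --- it is imported verbatim from Brazas (\cite[Lemma 5.10]{3}) --- so there is no internal proof to compare with; your attempt must be judged on its own. The construction of $h$, its well-definedness, the relation $p_H\circ h=\hat p$, surjectivity via lifting paths over $I$, and injectivity via lifting a path homotopy over $I^2$ are all correct and carefully argued, as is the continuity of $h$. The gap is in the continuity of $h^{-1}$, which you dismiss as ``routine'' after singling out injectivity as the delicate step. Your identity $h^{-1}\bigl((U,\langle\alpha\rangle_H)\bigr)=W$, with $W$ the path component of $\hat p^{-1}(U)$ through $\hat x'$, shows that $h$ pulls basic open sets back to open sets (which is continuity of $h$ again) and that $h(W)=(U,\langle\alpha\rangle_H)$ is open for these particular sets $W$. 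For $h^{-1}$ to be continuous you need $h$ to be open: for every open $V\subseteq\hat X$ and every $\hat y\in V$ there must exist an open $U\ni\hat p(\hat y)$ whose path component $W_U$ through $\hat y$ satisfies $W_U\subseteq V$; that is, the path components of the sets $\hat p^{-1}(U)$ must form a neighborhood basis for $\hat X$. Nothing you have proved gives this: unique path lifting identifies the points of $W_U$ as endpoints of lifts of paths in $U$, but does not prevent such lifts from ending outside a prescribed $V$. This neighborhood-basis statement is essentially the entire content of the lemma (it says the topology of $\hat X$ is the whisker topology), and it is precisely where the full UL property --- for arbitrary connected, locally path connected domains $Y$, not merely $I$ and $I^2$ --- must be invoked.

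The standard way to close the argument is not to attack openness directly but to realize the inverse as a lift. One first shows $(p_H)_*\pi_1(\tilde X_H,e_H)\le H$, equivalently that $p_H$ has unique path lifting (this is \cite[Lemma 5.9]{3}, echoed in Lemma 2.10 of the paper), and then applies the UL property of $\hat p$ with $Y=\tilde X_H$ (which is connected and locally path connected) to obtain a continuous $k:(\tilde X_H,e_H)\to(\hat X,\hat x)$ with $\hat p\circ k=p_H$. Unique path lifting forces $k(\langle\alpha\rangle_H)=\tilde\alpha(1)$, i.e.\ $k$ is your set-theoretic inverse, and $k\circ h=\mathrm{id}_{\hat X}$ follows from the uniqueness clause of UL applied to $\hat p$ lifted through itself. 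If you prefer to keep the hands-on approach, you must supply a proof either that $(p_H)_*\pi_1(\tilde X_H,e_H)\le H$ or directly that the sets $W_U$ form a neighborhood basis of $\hat X$; neither is routine, and neither follows from lifting paths and squares alone.
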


A semicovering map introduced by Brazas in \cite[Definition 3.1]{5} as a local homeomorphism with continuous lifting of paths and homotopies.  Then he \cite[Definition 2.4]{6} simplified this definition by showing that the property of continuous lifting of homotopies can obtain from the continuous lifting of paths.
Further simplification could be done, such as Kowkabi et al. \cite[Theorem 3.2]{25} by showing that the continuous lifting of paths can be replaced with the unique path lifting property, i.e, the $ UL $ property with respect to paths.
Also, this result can be concluded from a new definition of semicoverings which Klevdal \cite[Definition 7]{14} presented and showed that these definitions are equivalent. In this paper, we use a continuous surjective local homeomorphism with unique path lifting property as the definition of a semicovering maps.

The following lemma which has been proved in \cite[Theorem 3.2]{13}, presented a relationship between semicoverings and generalized coverings. We give another proof using some properties of the topologized fundamental group.

\begin{lemma} Let p$:\tilde{X}\rightarrow X$ be a continuous surjective local homeomorphism with the unique path lifting property. Then $p$ has UL property.
\end{lemma}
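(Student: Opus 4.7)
My plan is to construct the lift $\tilde f: (Y,y_0) \to (\tilde X, \tilde x_0)$ pointwise via the unique path lifting hypothesis, and then verify well-definedness, continuity, and uniqueness in turn. Since $Y$ is connected and locally path connected, for each $y \in Y$ there is a path $\alpha$ from $y_0$ to $y$. The composition $f \circ \alpha$ is a path in $X$ starting at $x_0$, which by hypothesis lifts uniquely to a path $\widetilde{f \circ \alpha}$ in $\tilde X$ starting at $\tilde x_0$. Define $\tilde f(y) := \widetilde{f \circ \alpha}(1)$. Then $p \circ \tilde f = f$ and $\tilde f(y_0) = \tilde x_0$ are immediate from the construction.

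The essential step is well-definedness. If $\beta$ is a second path from $y_0$ to $y$ in $Y$, then $\alpha * \beta^{-1}$ is a loop at $y_0$, and the subgroup hypothesis yields $[(f \circ \alpha)*(f \circ \beta)^{-1}] \in p_*\pi_1(\tilde X, \tilde x_0)$. Thus there is a loop $\tilde \gamma$ at $\tilde x_0$ in $\tilde X$ with $p \circ \tilde \gamma$ path-homotopic rel endpoints to $(f \circ \alpha)*(f \circ \beta)^{-1}$. Here I invoke the fact, essentially due to Brazas \cite[Definition 2.4]{6} and Kowkabi et al.\ \cite[Theorem 3.2]{25}, that a continuous surjective local homeomorphism with unique path lifting enjoys the path-homotopy lifting property. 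Applying homotopy lifting and uniqueness of path lifts, the unique lift of $(f \circ \alpha)*(f \circ \beta)^{-1}$ starting at $\tilde x_0$ must coincide with $\tilde \gamma$ and is therefore a loop. Since the lift of a concatenation is the concatenation of the successive lifts, this forces the lift of $\beta$ from $\tilde x_0$ to end at the same point as the lift of $\alpha$ from $\tilde x_0$, so $\tilde f(y)$ is independent of the chosen path.

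For continuity, fix $y \in Y$, set $\tilde x := \tilde f(y)$, and choose an open neighborhood $\tilde U$ of $\tilde x$ on which $p$ restricts to a homeomorphism onto $U := p(\tilde U)$. By continuity of $f$ and local path connectedness of $Y$, pick a path-connected open neighborhood $V$ of $y$ with $f(V) \subseteq U$. For $y' \in V$ and a path $\beta$ from $y$ to $y'$ in $V$, the composition $(p|_{\tilde U})^{-1} \circ (f \circ \beta)$ is the unique lift of $f \circ \beta$ starting at $\tilde x$; concatenating this lift with the lift used to define $\tilde f(y)$ shows $\tilde f(y') = (p|_{\tilde U})^{-1}(f(y'))$. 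Hence $\tilde f(V) \subseteq \tilde U$ and $\tilde f|_V = (p|_{\tilde U})^{-1} \circ f|_V$ is continuous. Uniqueness of $\tilde f$ is immediate: any other lift $\tilde g$ would, when composed with a path $\alpha$ from $y_0$ to $y$, yield a lift of $f \circ \alpha$ starting at $\tilde x_0$, and by uniqueness of path lifts must equal $\widetilde{f \circ \alpha}$, so $\tilde g(y) = \tilde f(y)$. The principal obstacle is the well-definedness argument, as it depends on upgrading unique path lifting to path-homotopy lifting; once this input is granted, the remaining steps are routine local-homeomorphism manipulations.
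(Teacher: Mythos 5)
Your proof is correct, but it takes a genuinely different route from the paper's. You give the direct, classical lifting-criterion argument: construct $\tilde f$ pointwise by unique path lifting, establish well-definedness by upgrading unique path lifting to path-homotopy lifting (your appeal to Brazas \cite{6} and Kowkabi et al. \cite{25} for this upgrade is legitimate and consistent with how the paper itself frames semicoverings), and then verify continuity and uniqueness by routine local-homeomorphism manipulations; all three steps go through as you describe. The paper deliberately avoids this construction and instead argues through the topologized fundamental group: $H=p_*\pi_1(\tilde X,\tilde x)$ is an open subgroup of $\pi_1^{qtop}(X,x_0)$ by Brazas's characterization of semicovering subgroups, open subgroups of a quasitopological group are closed, and by Brazas--Fabel \cite{4} every closed subgroup of $\pi_1^{qtop}(X,x_0)$ is a generalized covering subgroup. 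Your approach is self-contained at the level of point-set lifting arguments and makes the mechanism of the unique lifting property transparent; the paper's is shorter and showcases the quasitopological machinery (which is its stated reason for giving ``another proof''), at the cost of outsourcing the work to three cited theorems and leaving implicit the final identification of $p$ itself with the generalized covering realizing $H$ (via Lemma 2.8).
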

\begin{proof}
By the above assertion, the map $p:\tilde{X}\rightarrow X$ is a semicovering map. Let $H=p_{\ast}\pi_{1}(\tilde{X},\tilde{x})\leq \pi_{1}(X,x_{0})$. Recall from \cite[Theorem 3.5]{6} that $H$ is an open subgroup of $\pi_{1}^{qtop}(X,x_{0})$. Since $\pi_{1}^{qtop}(X,x_{0})$ is a quasitopological group \cite{Br4}, $H$ is also a closed subgroup of $\pi_{1}^{qtop}(X,x_{0})$ \cite[Theorem 1.3.5]{1}. Finally, Brazas and Fabel \cite[Theorem 11]{4} showed that every closed subgroup of $\pi_{1}^{qtop}(X,x_{0})$ is a generalized covering subgroup. Hence it is done.
\end{proof}

The above lemma states that every semicovering map is a generalized covering map for connected locally path connected spaces. Clearly, the converse does not hold in general (see \cite[Example 4.15]{11}). It seems interesting to find some topological properties which guarantee the converse.

Fischer and Zastrow \cite[Lemma 2.3]{11} showed that the map $p_H:{\tilde{X}}_H\rightarrow X$ is open when $H=1$. One can prove the same result for any subgroup $H\leq {\pi }_1(X,x_0)$ with a similar proof; ``The projection map $p_H:{\tilde{X}}_H\rightarrow X$ is open if and only if $X$ is locally path connected". Thus for locally path connected spaces, the projection map $p_H$ is a local homeomorphism if and only if it is locally injective (a map $p:Y\rightarrow X$ is called locally injective if for every $y\in Y$, there exists an open neighborhood $ U $ of $ y $ such that the restriction of $ p $ on $ U $ is injective).

On the other hand, every map $p:{\tilde{X}}\rightarrow X$ with UL property is surjective when $ X $ is a path connected space. Therefore, for a connected, locally path connected  space $X$, a generalized covering map $p:\tilde{X}\rightarrow X$ is a semicovering map if and only if it is locally injective. Moreover, Fischer and Zastrow \cite[Lemma 5.5]{10} showed that  local homeomorphismness is a sufficient condition for the endpoint projection $p_H:{\tilde{X}}_H\rightarrow X$ to have UL property with respect to paths. Therefore, for a connected locally path connected space $ X $ and any subgroup $H\leq{\pi }_1(X,x_0)$, the map $p_H:{\tilde{X}}_H\rightarrow X$ is a semicovering map if and only if it is locally injective.

In the following lemma, we give necessary and sufficient conditions on a subgroup $H$ of ${\pi }_1(X,x_0)$ to be a generalized covering subgroup. Note that Fischer and Zastrow \cite[Lemma 2.8]{11} stated a result similar to the following for the trivial subgroup $H=1$. Then Brodskiy et al. \cite[Proposition 2.18]{7} extended the one side of the inclusion $(p_{H})_{\ast}\pi_{1}(\tilde{X},e_{H})\leq H$ for every generalized covering subgroup. In fact, they showed that in a path connected space $ X $, the map $p_H:{\tilde{X}}_H\rightarrow X$ has $ UPL $ property if and only if $(p_{H})_{\ast}\pi_{1}(\tilde{X},e_{H})\leq H$. The other side is obvious. Recently, Brazas \cite[Lemma 5.9]{3} gave two equivalent conditions for the equality $H={(p_H)}_*{\pi }_1({\tilde{X}}_H,{\tilde{e}}_H)$ (see also  \cite[Corollary 3.2]{26}). Now, we represent the result with another point of view.

\begin{lemma} Suppose that $H$ is a subgroup of the fundamental group ${\pi }_1\left(X,x_0\right)$. If ${\pi }^{gc}_1\left(X,x_0\right)\cap H\ne {\pi }^{gc}_1\left(X,x_0\right)$, then $H$ can not be a generalized covering subgroup. If ${\pi }^{gc}_1\left(X,x_0\right)\cap H={\pi }^{gc}_1\left(X,x_0\right)$, then $H$ is a generalized covering subgroup if and only if $H={(p_H)}_*{\pi }_1({\tilde{X}}_H,{\tilde{e}}_H)$, where $p_H:{\tilde{X}}_H\rightarrow X$ is the end point projection.
\end{lemma}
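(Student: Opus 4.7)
The first assertion is immediate from Definition 2.2: $\pi^{gc}_1(X,x_0)$ is defined as the intersection of all generalized covering subgroups of $\pi_1(X,x_0)$, so every such subgroup contains it. The hypothesis $\pi^{gc}_1(X,x_0)\cap H\ne\pi^{gc}_1(X,x_0)$ is just $\pi^{gc}_1(X,x_0)\not\subseteq H$, which is therefore incompatible with $H$ being a generalized covering subgroup. The ``only if'' direction of the second assertion is also essentially formal via Lemma 2.8: given a generalized covering $\hat p:(\hat X,\hat x)\to(X,x_0)$ with $\hat p_*\pi_1(\hat X,\hat x)=H$, the homeomorphism $h:(\hat X,\hat x)\to(\tilde X_H,\tilde e_H)$ provided by that lemma satisfies $p_H\circ h=\hat p$, whence $(p_H)_*\pi_1(\tilde X_H,\tilde e_H)=(p_H\circ h)_*\pi_1(\hat X,\hat x)=\hat p_*\pi_1(\hat X,\hat x)=H$.

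The substantive work is the ``if'' direction. Assume $H=(p_H)_*\pi_1(\tilde X_H,\tilde e_H)$; I have to show that $p_H:\tilde X_H\to X$ is a generalized covering map. The space $\tilde X_H$ is locally path connected because $X$ is and the whisker basis $(U,\langle\alpha\rangle_H)$ transports path connected $U$ to path connected basic neighborhoods; it is path connected because each $\alpha\in P(X,x_0)$ lifts to the continuous path $t\mapsto\langle\alpha_t\rangle_H$ in $\tilde X_H$, with $\alpha_t(s)=\alpha(ts)$, joining $\tilde e_H$ to $\langle\alpha\rangle_H$. The result of Brodskiy, Dydak, Labuz and Mitra cited in the paragraph preceding the lemma identifies $(p_H)_*\pi_1(\tilde X_H,\tilde e_H)\le H$ with $p_H$ having the unique path lifting property, so the assumed equality yields unique path lifting.

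It remains to upgrade unique path lifting to the full $UL$ property of Definition 2.1. Given connected and locally path connected $(Y,y_0)$ together with $f:(Y,y_0)\to(X,x_0)$ satisfying $f_*\pi_1(Y,y_0)\subseteq H$, set $\tilde f(y)=\langle f\circ\gamma_y\rangle_H$ for an arbitrarily chosen path $\gamma_y$ in $Y$ from $y_0$ to $y$. Independence of the choice is the usual observation that for two such paths the loop $\gamma_y*{\gamma'_y}^{-1}$ lies in $\pi_1(Y,y_0)$, hence its $f$-image represents an element of $H$, so $f\circ\gamma_y\sim_H f\circ\gamma'_y$. Continuity of $\tilde f$ at $y$, tested on a basic whisker neighborhood $(U,\langle f\circ\gamma_y\rangle_H)$, follows by choosing a path connected open $W\ni y$ with $f(W)\subseteq U$ and noting that, for $y'\in W$ joined to $y$ by a path $\delta$ in $W$, one has $\tilde f(y')=\langle(f\circ\gamma_y)*(f\circ\delta)\rangle_H\in(U,\langle f\circ\gamma_y\rangle_H)$. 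Uniqueness of $\tilde f$ then follows from the unique path lifting established above together with the connectedness of $Y$. The only delicate step in the whole plan is this continuity check, but it reduces cleanly to the local path connectedness of $Y$ and the shape of the whisker basis; the hypothesis $\pi^{gc}_1(X,x_0)\subseteq H$ is needed only to make the first assertion non-vacuous and plays no active role in verifying $UL$ for $p_H$.
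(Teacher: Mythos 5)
Your proposal is correct, and it follows the same overall strategy as the paper: reduce everything to the behaviour of the endpoint projection $p_H$ and to the equivalence between $(p_H)_*\pi_1(\tilde X_H,\tilde e_H)\le H$ and unique path lifting. The interesting difference is in which pieces each of you writes out. The paper's proof consists almost entirely of a direct verification that $(p_H)_*\pi_1(\tilde X_H,\tilde e_H)\le H$ forces unique path lifting (two lifts $\alpha,\beta$ of the same path are compared via the standard lifts $\tilde\gamma,\tilde\mu$ and the loop $\tilde\gamma*\alpha^{-1}|_{[0,s]}*\beta|_{[0,s]}*\tilde\mu^{-1}$); it leaves the ``only if'' direction and the promotion of unique path lifting to the full $UL$ property of Definition~2.1 to the surrounding citations (Lemma~2.8, Brodskiy et al., Fischer--Zastrow). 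You do the opposite: you delegate the unique-path-lifting step to the cited Brodskiy--Dydak--Labuz--Mitra equivalence and instead write out in full the parts the paper suppresses --- the ``only if'' direction via Lemma~2.8, the connectedness and local path connectedness of $\tilde X_H$, and the standard construction $\tilde f(y)=\langle f\circ\gamma_y\rangle_H$ with its well-definedness, continuity on whisker basic neighborhoods, and uniqueness from unique path lifting plus path connectedness of $Y$. The two write-ups are therefore complementary; yours is the more self-contained as a proof of the lemma as literally stated, since the statement asserts that $H$ is a generalized covering subgroup, which requires the full $UL$ property and not merely $UPL$. Your closing observation that the hypothesis $\pi^{gc}_1(X,x_0)\le H$ plays no active role in the second part is also accurate and worth keeping.
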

\begin{proof} The first part is obvious by the definition of $ {\pi }^{gc}_1\left(X,x_0\right) $ . For the second part, assume that ${\pi }^{gc}_1(X,x_0)\cap H={\pi }^{gc}_1\left(X,x_0\right)$. We show that if ${\left(p_H\right)}_*{\pi }_1({\tilde{X}}_H,{\tilde{e}}_H)\leq H$, then $p_H:{\tilde{X}}_H\rightarrow X$ has UL property with respect to paths. Let $ \alpha,\beta :(I,0)\rightarrow ({\tilde{X}}_H,{\tilde{e}}_H)$ be two paths with $ \alpha(0)=\beta(0)={\tilde{e}}_H $ and $ p_H \circ \alpha=p_H\circ\beta=f $. For $ s \in (0,1] $ we show that $ \alpha(s)=\beta(s) $. Let $ {\left\langle \gamma \right\rangle }_H=:\alpha(s) $ and $ {\left\langle \mu \right\rangle }_H=:\beta(s) $, where $ \gamma, \mu \in P(X,x_0) $. Since $ p_H $ has path lifting property, $ \gamma $ and $ \mu $ have standard lifts $ \tilde{\gamma}, \tilde{\mu}:(I,0)\rightarrow ({\tilde{X}}_H,\tilde{e}_H) $, respectively, i.e, $\tilde{\gamma}(t)={\left\langle \gamma_t \right\rangle }_H$ and $\tilde{\mu}(t)={\left\langle \mu_t \right\rangle }_H$ for every $ t \in I $ and so $\tilde{\gamma}(1)={\left\langle \gamma_1 \right\rangle }_H={\left\langle \gamma \right\rangle }_H=\alpha(s) $ and $ \tilde{\mu}(1)={\left\langle \mu_1 \right\rangle }_H={\left\langle \mu \right\rangle }_H=\beta(s) $. Clearly $ [ \tilde{\gamma}*\alpha^{-1}\mid_{[0,s]}*\beta\mid_{[0,s]}*\tilde{\mu}^{-1}] \in {\pi }_1({\tilde{X}}_H,{\tilde{e}}_H) $. Then by assumption $ {\left(p_H\right)}_*[ \tilde{\gamma}*\alpha^{-1}\mid_{[0,s]}*\beta\mid_{[0,s]}*\tilde{\mu}^{-1}] \in H $. Therefore $ [\gamma*\mu^{-1}]=[\gamma*f^{-1}\mid_{[0,s]}*f\mid_{[0,s]}*\mu^{-1}]=[(p_H\circ \tilde{\gamma})*(p_H\circ\alpha^{-1}\mid_{[0,s]})*(p_H\circ\beta\mid_{[0,s]})*(p_H\circ\tilde{\mu}^{-1})] \in H $ which implies that $ {\left\langle \gamma \right\rangle }_H={\left\langle \mu \right\rangle }_H $.
\end{proof}

Brazas \cite[Theorem 2.36]{3} using pullbacks showed that the intersection of any collection of generalized covering subgroups of ${\pi }_1\left(X,x_0\right)$ is also a generalized covering subgroup. In the following, using the above theorem, we give another proof which seems simpler than that of Brazas.

\begin{corollary} If $ \{H_j / j \in J\} $ is any collection of generalized covering subgroups of ${\pi }_1\left(X,x_0\right)$, then $ H=\cap_jH_j $ is a generalized covering subgroup.
\end{corollary}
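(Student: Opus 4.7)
The plan is to invoke Lemma 2.10 directly, so we must verify its two hypotheses for $H = \cap_j H_j$: namely that $\pi_1^{gc}(X,x_0) \subseteq H$ and that $H = (p_H)_*\pi_1(\tilde{X}_H, \tilde{e}_H)$. The first is essentially automatic: since each $H_j$ is a generalized covering subgroup, by the very definition of $\pi_1^{gc}(X,x_0)$ as the intersection of all such subgroups, we have $\pi_1^{gc}(X,x_0) \subseteq H_j$ for every $j \in J$, and hence $\pi_1^{gc}(X,x_0) \subseteq \cap_j H_j = H$.

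The main work is the equality $H = (p_H)_*\pi_1(\tilde{X}_H, \tilde{e}_H)$. The inclusion $H \subseteq (p_H)_*\pi_1(\tilde{X}_H, \tilde{e}_H)$ is structural and does not use that each $H_j$ is a generalized covering subgroup: given $[\gamma] \in H$, the standard lift $\tilde{\gamma}(t) = \langle \gamma_t \rangle_H$ through $p_H$ satisfies $\tilde{\gamma}(0) = \tilde{e}_H$, while $\tilde{\gamma}(1) = \langle \gamma \rangle_H = \langle c_{x_0} \rangle_H = \tilde{e}_H$ since $[\gamma \ast c_{x_0}^{-1}] = [\gamma] \in H$. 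So $\tilde{\gamma}$ is a loop at $\tilde{e}_H$ whose image under $(p_H)_*$ is $[\gamma]$.

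For the reverse inclusion I would introduce, for each $j \in J$, the natural map $q_j : \tilde{X}_H \to \tilde{X}_{H_j}$ defined by $\langle \gamma \rangle_H \mapsto \langle \gamma \rangle_{H_j}$. This is well-defined because $H \subseteq H_j$ implies $\gamma_1 \sim_H \gamma_2 \Rightarrow \gamma_1 \sim_{H_j} \gamma_2$; it is continuous in the whisker topology since basic neighborhoods $(U, \langle \alpha \rangle_H)$ pull back from $(U, \langle \alpha \rangle_{H_j})$; and by construction $p_{H_j} \circ q_j = p_H$. Now for any loop $\alpha$ in $\tilde{X}_H$ based at $\tilde{e}_H$, the composition $q_j \circ \alpha$ is a loop in $\tilde{X}_{H_j}$ based at $\tilde{e}_{H_j}$, so $[p_H \circ \alpha] = [p_{H_j} \circ q_j \circ \alpha] \in (p_{H_j})_*\pi_1(\tilde{X}_{H_j}, \tilde{e}_{H_j}) = H_j$, where the last equality uses that $H_j$ is a generalized covering subgroup together with Lemma 2.9. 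As this holds for every $j$, we conclude $[p_H \circ \alpha] \in \cap_j H_j = H$.

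The main (and essentially only nontrivial) obstacle is verifying the second inclusion $(p_H)_*\pi_1(\tilde{X}_H,\tilde{e}_H) \subseteq H$, and the key maneuver there is constructing the intertwining maps $q_j$; once those are in place the containment reduces to the generalized covering property of each individual $H_j$. With $H = (p_H)_*\pi_1(\tilde{X}_H, \tilde{e}_H)$ established and $\pi_1^{gc}(X,x_0) \subseteq H$, Lemma 2.10 immediately gives that $H$ is a generalized covering subgroup.
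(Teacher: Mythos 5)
Your proposal is correct and follows essentially the same route as the paper: the paper's auxiliary loop $\tilde{\alpha}_j(t)=\langle\beta_t\rangle_{H_j}$ is exactly your $q_j\circ\alpha$, and the paper likewise reduces the claim to Lemma 2.10 together with the equality $H_j=(p_{H_j})_*\pi_1(\tilde{X}_{H_j},\tilde{e}_{H_j})$ for each generalized covering subgroup $H_j$ (you are in fact slightly more explicit about the continuity of $q_j$ and the easy inclusion $H\leq (p_H)_*\pi_1(\tilde{X}_H,\tilde{e}_H)$). The only slip is a citation: that last equality follows from Lemma 2.8 (or the second part of Lemma 2.10), not from Lemma 2.9, which concerns local homeomorphisms with unique path lifting.
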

\begin{proof} Using Lemma 2.10, it is enough to show that $${(p_H)}_*{\pi }_1({\tilde{X}}_H,{\tilde{e}}_H)\leq\cap_j {(p_{H_j})}_*{\pi }_1({\tilde{X}}_{H_j},{\tilde{e}}_{H_j})=H.$$

 Let $ [\alpha]=[p_H\circ{\tilde{\alpha}}]=(p_H)_*[\tilde{\alpha}] \in {(p_H)}_*{\pi }_1({\tilde{X}}_H,{\tilde{e}}_H) $, where $ \tilde{\alpha}:I\rightarrow {\tilde{X}}_H$ is a loop at $ {\tilde{e}}_H $ with $ \tilde{\alpha}(t)={\left\langle \beta_t \right\rangle }_H $. We define $ \tilde{\alpha}_j:I\rightarrow {\tilde{X}}_{H_j}$ by $ \tilde{\alpha}_j(t)={\left\langle \beta_t \right\rangle }_{H_j} $, for every $ j \in J $. It is clear that $ \tilde{\alpha}_j $ is a loop at $ {\tilde{e}}_{H_j} $ and $ p_H\circ\tilde{\alpha}=p_H\circ\tilde{\alpha}_j $. Thus $ [p_H\circ\tilde{\alpha}]=[p_H\circ\tilde{\alpha}_j]=[\alpha] $ for every $ j \in J $. Hence $ {(p_H)}_*{\pi }_1({\tilde{X}}_H,{\tilde{e}}_H)\leq H $.
\end{proof}

It is easy to check that for any subgroup $ H_0 $ of the fundamental group $ {\pi }_1(X,x_0) $, we have $ H_0\leq {(p_{H_0})}_*{\pi }_1({\tilde{X}}_{H_0}, \tilde{e}_{H_0}) $. Hence, using Lemma 2.10, if $ H_0 $ is not a generalized covering subgroup, then $ H_0\neq {(p_{H_0})}_*{\pi }_1({\tilde{X}}_{H_0},{\tilde{e}}_{H_0})$. Put $ H_{i+1}={(p_{H_i})}_*{\pi }_1({\tilde{X}}_{H_i},  \tilde{e}_{H_i}) $ for any natural number $ i\in {\mathbb N} $ and use again Lemma 2.10. This process forms an ascending chain of subgroups of the fundamental group. If $ H_i $ is a generalized covering subgroup , then by Lemma 2.10 $ H_i=H_{i+1} $ which gives the following corollary.

\begin{corollary} Let $H_0\le {\pi }_1(X,x_0)$ and define $H_{i+1}={(p_{H_i})}_*{\pi }_1({\tilde{X}}_{H_i},{\tilde{e}}_{H_i})$ for any natural number $i\in {\mathbb N}$. Then ${\{H_i\}}_{i=0}^{\infty}$ is an ascending chain of subgroups of ${\pi }_1(X,x_0)$. Moreover, $H_i$ is a generalized covering subgroup of ${\pi }_1(X,x_0)$ if and only if $H_j=H_i$ for every $ j\geq i$.
\end{corollary}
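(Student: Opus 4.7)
The plan is to handle the two assertions in the corollary in order. For the ascending chain claim, the inclusion $H_i \leq H_{i+1}$ is precisely the observation stated just before the corollary, namely that $H \leq (p_H)_*\pi_1(\tilde{X}_H, \tilde{e}_H)$ for every subgroup $H \leq \pi_1(X,x_0)$, specialized to $H = H_i$. Iterating gives the whole ascending chain with no further work.

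For the forward direction of the biconditional, I would assume that $H_i$ is a generalized covering subgroup and invoke Lemma 2.10 applied to $H_i$. Since $H_i$ is gcov, the first part of Lemma 2.10 forces $\pi^{gc}_1(X,x_0) \cap H_i = \pi^{gc}_1(X,x_0)$, and then its second part yields the equality $H_i = (p_{H_i})_*\pi_1(\tilde{X}_{H_i},\tilde{e}_{H_i})$, which by definition equals $H_{i+1}$. Hence $H_{i+1} = H_i$ is itself a generalized covering subgroup, and an easy induction on $j \geq i$ gives $H_j = H_i$ for every such $j$.

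For the reverse direction, I would assume $H_j = H_i$ for every $j \geq i$; in particular $H_i = H_{i+1} = (p_{H_i})_*\pi_1(\tilde{X}_{H_i},\tilde{e}_{H_i})$. Then I would invoke the contrapositive form of the observation preceding the corollary, namely that if $H_i$ fails to be a generalized covering subgroup then $H_i \neq (p_{H_i})_*\pi_1(\tilde{X}_{H_i},\tilde{e}_{H_i})$. This contradicts the equality just obtained, so $H_i$ must be a generalized covering subgroup.

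The main obstacle here is really contained in Lemma 2.10 rather than in the corollary itself; the corollary is essentially a reformulation of that lemma as a fixed-point criterion for the self-map $H \mapsto (p_H)_*\pi_1(\tilde{X}_H,\tilde{e}_H)$ on subgroups of $\pi_1(X,x_0)$, and the iteration $\{H_i\}$ is the orbit of $H_0$ under this map. The delicate point to keep in mind during the reverse direction is that the implication ``$H = (p_H)_*\pi_1(\tilde{X}_H,\tilde{e}_H)$ implies $H$ is gcov'' relies on the non-trivial content of Lemma 2.10, which, in the locally path connected setting, upgrades unique path lifting of $p_H$ to the full \textit{UL} property.
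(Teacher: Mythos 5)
Your proposal is correct and follows essentially the same route as the paper: the ascending chain comes from the universal inclusion $H\le (p_H)_*\pi_1(\tilde{X}_H,\tilde{e}_H)$, and both directions of the biconditional are read off from Lemma 2.10 as a fixed-point criterion for $H\mapsto (p_H)_*\pi_1(\tilde{X}_H,\tilde{e}_H)$. Your version is in fact slightly more careful than the paper's, since you note explicitly that the hypothesis $\pi_1^{gc}(X,x_0)\cap H_i=\pi_1^{gc}(X,x_0)$ needed for the second part of Lemma 2.10 is forced by the first part when $H_i$ is assumed to be a generalized covering subgroup.
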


It is well-known that if $H$ is an open subgroup of a quasitopological group and $H\leq K$, then $K$ is also an open subgroup. In general, the result for $ H $ does not hold when  $K$ is an open subgroup. 
\begin{proposition} Let $K$ be an open subgroup of ${{\pi }_1}^{qtop}(X,x_0)$ and $H\le K$ such that $[K:H]<\infty $. Then $H$ is an open subgroup of ${{\pi }_1}^{qtop}(X,x_0)$ if and only if $X$ is homotopically Hausdorff relative to $H$.
\end{proposition}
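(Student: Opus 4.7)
The plan is to treat the two implications separately. The forward direction falls out of standard quasitopological-group facts combined with results already invoked in the paper, while the converse is obtained by upgrading the generalized covering $p_H$ to a semicovering by hand.

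Assume first that $H$ is open in $\pi_1^{qtop}(X,x_0)$. Since $\pi_1^{qtop}(X,x_0)$ is a quasitopological group, left translations are homeomorphisms, so the complement $\pi_1(X,x_0)\setminus H=\bigcup_{gH\ne H}gH$ is a union of open cosets, hence open, and therefore $H$ is also closed. By Brazas and Fabel \cite[Theorem 11]{4}, every closed subgroup of $\pi_1^{qtop}(X,x_0)$ is a generalized covering subgroup, so $H$ is one; the argument of Proposition 2.3 (based on \cite[Proposition 6.4]{11} and \cite[Lemma 5.9]{3}) then yields that $X$ is homotopically Hausdorff relative to $H$.

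For the converse, suppose that $X$ is homotopically Hausdorff relative to $H$. Since $K$ is open in $\pi_1^{qtop}(X,x_0)$, \cite[Theorem 3.5]{6} shows that $p_K:\tilde{X}_K\to X$ is a semicovering, hence a local homeomorphism; write $K=H\sqcup Hg_2\sqcup\cdots\sqcup Hg_n$ as its finite decomposition into right cosets of $H$. The goal is to prove that $p_H:\tilde{X}_H\to X$ is also a local homeomorphism. Once this is established, \cite[Lemma 5.5]{10} gives $p_H$ the unique path lifting property, so $p_H$ is a semicovering and $H=(p_H)_*\pi_1(\tilde{X}_H,\tilde{e}_H)$ is a semicovering subgroup; a second appeal to \cite[Theorem 3.5]{6} then concludes that $H$ is open in $\pi_1^{qtop}(X,x_0)$.

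To establish local injectivity of $p_H$ at an arbitrary point $\langle\alpha\rangle_H\in\tilde{X}_H$, first use that $p_K$ is a local homeomorphism to pick an open neighborhood $V$ of $\alpha(1)$ on which $p_K$ restricts to a homeomorphism $(V,\langle\alpha\rangle_K)\to V$. For each $i=2,\ldots,n$ the right coset $Hg_i$ differs from $H$, so homotopical Hausdorffness relative to $H$ (applied to $\alpha$ and $g_i$) furnishes an open neighborhood $U_i$ of $\alpha(1)$ admitting no loop $\gamma:(I,\dot{I})\to(U_i,\alpha(1))$ with $[\alpha*\gamma*\alpha^{-1}]\in Hg_i$. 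Let $W$ be a path connected open neighborhood of $\alpha(1)$ contained in $V\cap U_2\cap\cdots\cap U_n$, which exists because $X$ is locally path connected. If $\gamma_1,\gamma_2$ are paths in $W$ starting at $\alpha(1)$ with $\gamma_1(1)=\gamma_2(1)$, then $\delta=\gamma_1*\gamma_2^{-1}$ is a loop in $W$ at $\alpha(1)$; injectivity of $p_K$ on $(V,\langle\alpha\rangle_K)$ forces $[\alpha*\delta*\alpha^{-1}]\in K$, and the choice of $U_2,\ldots,U_n$ excludes every non-trivial coset $Hg_i$, so $[\alpha*\delta*\alpha^{-1}]\in H$ and $\langle\alpha*\gamma_1\rangle_H=\langle\alpha*\gamma_2\rangle_H$. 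Hence $p_H$ is injective on $(W,\langle\alpha\rangle_H)$. The main obstacle is producing a single whisker neighborhood of $\langle\alpha\rangle_H$ that simultaneously separates every non-trivial right coset of $H$ in $K$ by homotopical Hausdorffness and remains inside the local-injectivity neighborhood of $p_K$; this is precisely what the finiteness of $[K:H]$ provides, by letting $W$ be obtained as a finite intersection.
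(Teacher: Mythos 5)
Your proof is correct, and while it shares the same combinatorial core as the paper's argument, it packages it differently. For the substantive direction (relative homotopical Hausdorffness $\Rightarrow$ openness), the paper writes $K=H\cup Hg_1\cup\dots\cup Hg_n$, intersects the finitely many neighborhoods $U_j$ supplied by the relative homotopically Hausdorff property, asserts $[\alpha * i_*\pi_1(U,\alpha(1))*\alpha^{-1}]\le H$, assembles these into a path open cover $\mathcal{U}$ with $\widetilde{\pi}(\mathcal{U},x_0)\le H$, and invokes \cite[Theorem 4.1]{16}; you instead prove that $p_H$ is locally injective, hence a semicovering via \cite[Lemma 5.5]{10}, and conclude openness from \cite[Theorem 3.5]{6}. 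Your route is slightly longer but has two advantages. First, it makes explicit a step the paper glosses over: avoiding the cosets $Hg_1,\dots,Hg_n$ only excludes $K\setminus H$, so one must still force the conjugated local loops to land inside $K$ in the first place; your neighborhood $V$, extracted from the local homeomorphism $p_K$, supplies exactly the needed inclusion $[\alpha]\,i_*\pi_1(V,\alpha(1))\,[\alpha]^{-1}\le K$, whereas the paper uses the openness of $K$ only implicitly at this point. Second, your argument localizes cleanly at an arbitrary point of $\tilde{X}_H$. For the other direction your chain (open $\Rightarrow$ closed $\Rightarrow$ generalized covering subgroup $\Rightarrow$ relatively homotopically Hausdorff) is heavier machinery than the paper's one-line dismissal, but it is valid given the results already cited there; a more economical route is to note that an open $H$ contains a path Spanier subgroup $\widetilde{\pi}(\mathcal{V},x_0)$, which immediately yields the separating neighborhoods. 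Minor points you could tighten: justify $H=(p_H)_*\pi_1(\tilde{X}_H,\tilde{e}_H)$ by citing that unique path lifting forces $(p_H)_*\pi_1(\tilde{X}_H,\tilde{e}_H)\le H$ (the paper attributes this to \cite[Proposition 2.18]{7}), and record that surjectivity of $p_H$ follows from path connectedness of $X$ together with the path lifting property.
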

\begin{proof} Since $H\le K$ and $\left[K:H\right]<\infty $, one can write $K$ as a disjoint finite union of cosets of $H$, i.e, $K=H\cup Hg_1\cup \dots \cup Hg_n$, where $g_i\notin H$ for all $1\leq j\leq n$. Since $X$ is homotopically Hausdorff relative to $H$, for each j$=1,\dots ,n$ and for every path $\alpha \in P\left(X,x_0\right)$ there is an open neighborhood $U_j$ of $\alpha \left(1\right)\in X$ such that $[\alpha *i_*{\pi }_1\left(U_j,\alpha \left(1\right)\right)*{\alpha }^{-1}]\cap Hg_j=\emptyset $. Fix$\ \alpha \in P\left(X,x_0\right)$ and put $U=\bigcap^n_{j=1}{U_j}$. Clearly $\alpha \left(1\right)\in U\ne \emptyset $ and $[\alpha *i_*{\pi }_1\left(U_j,\alpha \left(1\right)\right)*{\alpha }^{-1}]\le H$. Now one can construct a path open cover $\mathcal{U}$ of $x_0$ such that $\widetilde{\pi }(\mathcal{U},x_0)\le H$, where $\widetilde{\pi }(\mathcal{U},x_0)$ is the path Spanier subgroup of $\mathcal{U}$ (see \cite{16} for the definition). Theorem 4.1 of \cite{16} implies that $H$ is an open subgroup of ${{\pi }_1}^{qtop}(X,x_0)$. The converse is trivial.
\end{proof}

Let $X$ be a connected, locally path connected space and $H$ be a subgroup of $\pi_ {1}(X,x_0)$. Then $H$ is a semicovering subgroup if and only if it is open in ${{\pi }_1}^{qtop}(X,x_0)$ (see \cite[Theorem 3.5]{6}). Using this fact and Proposition 2.13, we have the following result for semicovering subgroups.
\begin{corollary} Let $X$ be a connected, locally path connected space and $K\leq {\pi }_1(X,x_0)$ be a semicovering subgroup. If $H\leq K$ with $[K:H]<\infty $, then $H$ is also a semicovering subgroup of ${\pi }_1(X,x_0)$.
\end{corollary}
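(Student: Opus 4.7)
The plan is to combine the characterization in \cite[Theorem 3.5]{6} recalled immediately above with Proposition 2.13. By that characterization, for a connected, locally path connected space $X$, a subgroup of $\pi_1(X,x_0)$ is a semicovering subgroup if and only if it is open in $\pi_1^{qtop}(X,x_0)$; hence the hypothesis ``$K$ is a semicovering subgroup'' becomes ``$K$ is open in $\pi_1^{qtop}(X,x_0)$'' and the target conclusion becomes ``$H$ is open in $\pi_1^{qtop}(X,x_0)$''. Applying Proposition 2.13 to the pair $(K,H)$ with $K$ open and $[K:H]<\infty$ reduces the problem to verifying that $X$ is homotopically Hausdorff relative to $H$.

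To verify homotopically Hausdorff relative to $H$, I fix a path $\alpha\in P(X,x_0)$ and a class $g\in\pi_1(X,x_0)\setminus H$ and seek an open neighborhood $U$ of $\alpha(1)$ with $[\alpha*i_*\pi_1(U,\alpha(1))*\alpha^{-1}]\cap Hg=\emptyset$. I split on whether $g\in K$. In the easy subcase $g\notin K$, openness of $K$ provides a neighborhood $V$ of $\alpha(1)$ with $[\alpha*i_*\pi_1(V,\alpha(1))*\alpha^{-1}]\subseteq K$; since $H\subseteq K$ combined with $g\notin K$ forces $Hg\cap K=\emptyset$ (any element $hg\in K$ would give $g=h^{-1}(hg)\in K$), any such $V$ already witnesses the required disjointness.

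The main obstacle is the subcase $g\in K\setminus H$, where $Hg$ is one of the finitely many cosets $Hg_1,\ldots,Hg_n$ partitioning $K\setminus H$. My strategy is to lift the problem to the semicovering $p_K:(\tilde X_K,\tilde x_K)\to(X,x_0)$ associated with $K$: the space $\tilde X_K$ is connected and locally path connected (as the source of a surjective local homeomorphism to the connected, locally path connected space $X$), and $\tilde H:=(p_K)_*^{-1}(H)$ is a subgroup of $\pi_1(\tilde X_K,\tilde x_K)$ of finite index $n=[K:H]$. Lifting $\alpha$ to $\tilde\alpha$ in $\tilde X_K$ and using the local homeomorphism property of $p_K$ at $\tilde\alpha(1)$, I intend to construct a classical $n$-sheeted covering $q:Y\to\tilde X_K$ with $q_*\pi_1(Y)=\tilde H$ from the monodromy action of $\pi_1(\tilde X_K,\tilde x_K)$ on the finite set of right cosets modulo $\tilde H$. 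The composite $p_K\circ q:Y\to X$ is then a local homeomorphism with unique path lifting (a semicovering) whose image under $(p_K\circ q)_*$ equals $H$, which exhibits $H$ as a semicovering subgroup; equivalently, via Proposition 2.13 this confirms that $X$ is homotopically Hausdorff relative to $H$. The hardest technical step is the construction of $q$ inside $\tilde X_K$, since $\tilde X_K$ need not be semilocally simply connected; it is at this step that the finite-index hypothesis must be converted into the required local triviality to realize the monodromy action as a covering.
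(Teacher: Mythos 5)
Your reduction via \cite[Theorem 3.5]{6} and Proposition 2.13 is exactly the paper's route, and your easy subcase ($g\notin K$) is correct. The genuine gap is the one you flag yourself: the construction of the $n$-sheeted covering $q\colon Y\to \tilde{X}_K$ with $q_*\pi_1(Y)=\tilde{H}$. The monodromy action of $\pi_1(\tilde{X}_K,\tilde{x}_K)$ on the $n$ cosets of $\tilde{H}$ yields a covering realizing $\tilde{H}$ only when $\tilde{H}$ contains a Spanier group $\pi(\mathcal{U},\tilde{x}_K)$ for some open cover $\mathcal{U}$ of $\tilde{X}_K$, i.e.\ only when $\tilde{H}$ is already a covering subgroup; finite index alone does not provide this, and $\tilde{X}_K$, being locally homeomorphic to $X$, inherits exactly the local wildness that obstructs it in $X$. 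So the step you call ``the hardest technical step'' is not merely technical --- it is the entire content of the statement, and it cannot be carried out in general. Indeed, Proposition 2.13 is deliberately phrased as an equivalence (``$H$ is open \emph{if and only if} $X$ is homotopically Hausdorff relative to $H$'') precisely because openness of $H$ does not follow from openness of $K$ together with $[K:H]<\infty$; if your construction of $q$ went through unconditionally, it would collapse that equivalence to an unconditional implication, contradicting known examples of non-open finite-index subgroups of fundamental groups of locally wild Peano continua.

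For comparison, the paper supplies no argument here at all: its proof of this corollary is literally the concatenation ``semicovering subgroup $=$ open subgroup (\cite[Theorem 3.5]{6})'' with Proposition 2.13, which delivers the conclusion only under the additional assumption that $X$ is homotopically Hausdorff relative to $H$. You were right to detect that this hypothesis is needed and is not automatic; the correct repair is to add it to the statement of the corollary (as the formulation of Proposition 2.13 itself dictates), not to attempt to derive it from the finite-index hypothesis. With that hypothesis in place your whole case analysis becomes unnecessary and the corollary is immediate.
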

\begin{remark}
The class of all coverings, semicoverings, generalized coverings on $X$ forms a category denoted by  $\mathrm{Cov}(X)$, $\mathrm{SCov}(X)$, $\mathrm{GCov}(X)$, respectively. A morphism between two objects $p:\tilde{X}\rightarrow X$ and $q:\tilde{Y}\rightarrow X$ is a map $h:\tilde{Y}\rightarrow \tilde{X}$ such that $p\circ h=q$. It is easy to see that $\mathrm{Cov}(X)$ is a subcategory of $\mathrm{SCov}(X)$. Moreover, Lemma 2.9 implies the following:
$$\mathrm{Cov}(X) \leq \mathrm{SCov}(X) \leq \mathrm{GCov}(X).$$
\end{remark}

Existence of universal objects in the above categories is considered by some people. Pakdaman et al. \cite[Theorem 3.12]{18} showed that the Spanier covering is the universal object in the category of  $\mathrm{Cov}(X)$ for a locally path connected space $ X $. We recall from \cite[Definition 2.4]{15} that a space $ X $ is called coverable if the universal object exists in the category $\mathrm{Cov}(X)$. Brazas  presented a similar result for the category $\mathrm{SCov}(X)$. He introduced the universal object of the category $\mathrm{SCov}(X)$ in \cite[Definition 3.1]{5} and gave properties which support the existence of it \cite[Corollary 7.21]{5}.
 In each case the universal object may not exist, in general.  However, in the category $\mathrm{GCov}(X)$ the universal object exists, in general since it is easy to see from Corollary 2.11 that $ {\pi }^{gc}_1(X,x_0) $ is a generalized covering subgroup. It means that the map $ p_H:\tilde{X}_H\rightarrow X $ is a generalized covering, when $ H={\pi }^{gc}_1(X,x_0) $. Therefore, we have the following result.
\begin{corollary}
$ \tilde{X}_{{\pi }^{gc}_1(X,x_0)} $ is the universal object in the category $\mathrm{GCov}(X)$.
\end{corollary}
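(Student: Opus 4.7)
The plan is to exhibit, for every object $q:(\tilde{Y},\tilde{y}_0)\to (X,x_0)$ of $\mathrm{GCov}(X)$, a unique morphism from $p_H:\tilde{X}_H\to X$ to $q$, where $H=\pi_{1}^{gc}(X,x_0)$. The existence will follow from invoking the $UL$ property of $q$ applied to the map $p_H$ itself, and the subgroup inclusion required will be cheap because $H$ is by construction contained in every generalized covering subgroup.

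First I would record that $H=\pi_{1}^{gc}(X,x_0)$ is itself a generalized covering subgroup: this is precisely Corollary 2.11. Consequently, by Lemma 2.8 the endpoint projection $p_H:(\tilde{X}_H,\tilde{e}_H)\to(X,x_0)$ is a generalized covering map realizing $H$; in particular $\tilde{X}_H$ is connected and locally path connected, and by Lemma 2.10 we have the equality $(p_H)_*\pi_1(\tilde{X}_H,\tilde{e}_H)=H$. These properties are exactly the hypotheses required to apply the $UL$ property of any other generalized covering to $p_H$.

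Next, given an arbitrary object $q:(\tilde{Y},\tilde{y}_0)\to(X,x_0)$ of $\mathrm{GCov}(X)$, set $K=q_*\pi_1(\tilde{Y},\tilde{y}_0)$. By definition, $K$ is a generalized covering subgroup of $\pi_1(X,x_0)$, so by the definition of $\pi_{1}^{gc}(X,x_0)$ as the intersection of all such subgroups,
\[
(p_H)_*\pi_1(\tilde{X}_H,\tilde{e}_H)=H\leq K=q_*\pi_1(\tilde{Y},\tilde{y}_0).
\]
Since $\tilde{X}_H$ is connected and locally path connected, the $UL$ property of $q$ (Definition 2.1) applied to the continuous map $p_H:(\tilde{X}_H,\tilde{e}_H)\to(X,x_0)$ yields a unique continuous map $h:(\tilde{X}_H,\tilde{e}_H)\to(\tilde{Y},\tilde{y}_0)$ with $q\circ h=p_H$. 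This $h$ is a morphism from $p_H$ to $q$ in $\mathrm{GCov}(X)$, and its uniqueness is built into the $UL$ property of $q$, giving the universal property.

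I do not expect a real obstacle here: everything hinges on (i) Corollary 2.11 to know $H$ is a generalized covering subgroup (so that $\tilde{X}_H$ is connected and locally path connected and the equality $(p_H)_*\pi_1(\tilde{X}_H,\tilde{e}_H)=H$ holds via Lemma 2.10), and (ii) the definition of $\pi_{1}^{gc}(X,x_0)$ as an intersection to guarantee the subgroup containment hypothesis needed for the $UL$ property of $q$. The only mildly subtle point is the direction of universality — one must take ``universal object'' to mean an initial object in $\mathrm{GCov}(X)$, i.e.\ one admitting a (unique) morphism to every other object, which is the correct analogue of the classical universal covering and of Brazas's universal semicovering.
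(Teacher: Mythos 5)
Your proposal is correct and follows essentially the same route as the paper: the paper's justification is the paragraph preceding the corollary, which invokes Corollary 2.11 to see that $H=\pi_{1}^{gc}(X,x_0)$ is itself a generalized covering subgroup realized by $p_H:\tilde{X}_H\to X$, and then leaves the universality to the standard $UL$ argument you spell out (the inclusion $H\le K$ for every generalized covering subgroup $K$ plus the $UL$ property of $q$). Your write-up is simply a more explicit version of what the paper leaves implicit.
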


Using\cite[Corollary 7.2]{5} and \cite[Example 5.4]{3} one can conclude easily that $\mathrm{Cov}(X)=\mathrm{SCov}(X)=\mathrm{GCov}(X)$ for a locally path connected, semilocally simply connected space $X$. Torabi et al. \cite{19} extended the first equality for locally path connected, semilocally small generated spaces, i.e, $\mathrm{Cov}(X)=\mathrm{SCov}(X)$. Now in the following, we extend the result to generalized coverings.

\begin{proposition} If $X$ is a connected, locally path connected and semilocally small generated space, then
$$\mathrm{Cov}(X)=\mathrm{SCov}(X)=\mathrm{GCov}(X).$$
\end{proposition}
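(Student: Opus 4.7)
The first equality $\mathrm{Cov}(X) = \mathrm{SCov}(X)$ is already known from Torabi et al.~\cite{19}, and the inclusion $\mathrm{SCov}(X) \leq \mathrm{GCov}(X)$ is given by Lemma 2.9. The plan is therefore to establish the reverse inclusion $\mathrm{GCov}(X) \leq \mathrm{SCov}(X)$ under the semilocally small generated hypothesis.

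Let $p:(\tilde{X},\tilde{x})\to (X,x_0)$ be a generalized covering and put $H=p_*\pi_1(\tilde{X},\tilde{x})$. By Lemma 2.8, it suffices to work with the endpoint projection $p_H:(\tilde{X}_H,e_H)\to (X,x_0)$, since $p$ is homeomorphic to it over $X$. Because $X$ is locally path connected, $p_H$ is an open map (as noted in the discussion after Lemma 2.9), so to see that $p_H$ is a semicovering it is enough, by the same discussion, to prove that $p_H$ is locally injective. Once this is done, every generalized covering is a semicovering and the proof is complete.

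To prove local injectivity, fix a basic open set $(U,\langle\alpha\rangle_H)$ in $\tilde{X}_H$, and choose $U$ to be a small generated neighborhood of $\alpha(1)$, i.e.\ $i_*\pi_1(U,\alpha(1))\leq \pi_1^{sg}(X,\alpha(1))$, which exists by the semilocally small generated hypothesis. Suppose $\langle\beta\rangle_H,\langle\gamma\rangle_H\in (U,\langle\alpha\rangle_H)$ satisfy $p_H(\langle\beta\rangle_H)=p_H(\langle\gamma\rangle_H)$. Writing $\beta=\alpha*\delta_1$ and $\gamma=\alpha*\delta_2$ with $\delta_1,\delta_2$ paths in $U$ from $\alpha(1)$ to a common endpoint, the loop $\delta_1*\delta_2^{-1}$ lies in $U$ based at $\alpha(1)$. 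Hence $[\delta_1*\delta_2^{-1}]\in \pi_1^{sg}(X,\alpha(1))$, and conjugation by $\alpha$ gives $[\beta*\gamma^{-1}]=[\alpha*(\delta_1*\delta_2^{-1})*\alpha^{-1}]\in \pi_1^{sg}(X,x_0)$, using the standard fact that $\pi_1^{sg}$ is preserved under conjugation by a path.

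Now the key algebraic step is the chain of inclusions
$$\pi_1^{sg}(X,x_0)\;\leq\;\pi_1^{gc}(X,x_0)\;\leq\;H,$$
where the first is Proposition 2.3 and the second holds because $H$ is a generalized covering subgroup and $\pi_1^{gc}(X,x_0)$ is defined as the intersection of all such subgroups. Therefore $[\beta*\gamma^{-1}]\in H$, which yields $\langle\beta\rangle_H=\langle\gamma\rangle_H$ and shows $p_H\restriction_{(U,\langle\alpha\rangle_H)}$ is injective. I expect no serious obstacle here; the only mild technical point to verify cleanly is the change of basepoint inside $\pi_1^{sg}$, which follows routinely from the definition of the small generated subgroup as generated by conjugates $[\mu*\omega*\mu^{-1}]$ of small loops along paths $\mu\in P(X,x_0)$.
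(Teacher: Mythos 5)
Your proof is correct, but it reaches the conclusion by a different route than the paper. The paper does not pass through semicoverings at all: it uses the semilocally small generated hypothesis to produce an open cover $\mathcal{U}$ of $X$ whose Spanier subgroup satisfies $\pi(\mathcal{U},x)\le \pi_1^{sg}(X,x)\le \pi_1^{gc}(X,x)\le H$ for every generalized covering subgroup $H$, and then invokes the classical Spanier criterion to conclude that $H$ is already a \emph{covering} subgroup; this gives $\mathrm{GCov}(X)\le \mathrm{Cov}(X)$ in one stroke, and equality follows from Remark 2.15 without needing the result of \cite{19}. You instead prove $\mathrm{GCov}(X)\le \mathrm{SCov}(X)$ directly, by checking local injectivity of the endpoint projection $p_H$ on a basic neighborhood $(U,\langle\alpha\rangle_H)$ with $U$ a small generated neighborhood of $\alpha(1)$, and then import $\mathrm{Cov}(X)=\mathrm{SCov}(X)$ from \cite{19} for the remaining equality. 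The shared engine is the same in both arguments, namely $\pi_1^{sg}(X,x_0)\le \pi_1^{gc}(X,x_0)\le H$ together with the conjugation-invariance of $\pi_1^{sg}$ along paths; your local-injectivity computation is in effect a hands-on verification of the containment that the paper obtains by citing Spanier. What your version buys is a self-contained proof of the inclusion $\mathrm{GCov}(X)\le\mathrm{SCov}(X)$ at the level of the whisker-topology basis, at the cost of an extra dependence on \cite{19}; what the paper's version buys is brevity and the logically stronger intermediate statement that every generalized covering subgroup is a covering subgroup. Your argument has no gaps: the openness of $p_H$ and the reduction of the semicovering property to local injectivity are exactly the facts recorded in the discussion following Lemma 2.9, and the basepoint-change identity $[\alpha]\,\pi_1^{sg}(X,\alpha(1))\,[\alpha]^{-1}=\pi_1^{sg}(X,x_0)$ that you flag does follow routinely from the definition of the small generated subgroup.
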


\begin{proof} Since $X$ is semilocally small generated, there exists an open cover $U$ of $X$ such that ${\pi (U,x)\le {\pi }^{sg}_1(X,x)\le \pi }^{gc}_1\left(X,x\right)$. If $H$ is a generalized covering subgroup of ${\pi }_1\left(X,x\right)$, then ${\pi (U,x)\le \pi }^{gc}_1\left(X,x\right)\le H$ which implies that $H$ is a covering subgroup of ${\pi }_1\left(X,x\right)$. Hence $\mathrm{GCov}(X)\leq \mathrm{Cov}(X)$. The converse holds by Remark 2.15.
\end{proof}

The following corollaries seem interesting which are consequences of main results of this section.

\begin{corollary} If $f:Y\rightarrow X$ has UL property and $ X $ is semilocally small generated, then $ f $ is a local homeomorphism.
\end{corollary}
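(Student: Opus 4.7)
The plan is to read Corollary 2.18 as a one-line consequence of Proposition 2.17, once the map $f$ has been correctly identified as an object of $\mathrm{GCov}(X)$.

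First, I would observe that under the standing convention of the paper (all spaces connected and locally path connected), the hypothesis ``$f:Y\to X$ has UL property'' is exactly the defining condition of a generalized covering map in the sense of Definition 2.1. In other words, the pair $(Y,f)$ is an object of the category $\mathrm{GCov}(X)$.

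Next I would invoke Proposition 2.17. Since $X$ is connected, locally path connected, and semilocally small generated, that proposition gives the categorical equality
$$\mathrm{Cov}(X)=\mathrm{SCov}(X)=\mathrm{GCov}(X).$$
In particular $f\in\mathrm{Cov}(X)$, so $f$ is in fact a classical covering map, and hence a local homeomorphism by the evenly covered condition.

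I do not expect any real obstacle here; the proof is essentially bookkeeping, with all the substantive content already packaged in Proposition 2.17. The only subtlety worth flagging is the implicit assumption that $Y$ is connected and locally path connected, which is required for the naming convention of Definition 2.1 to attach the label ``generalized covering'' to $f$. If one preferred a more direct route that avoided this appeal to terminology, one could instead set $H=f_*\pi_1(Y,y_0)$ and use the inclusion $\pi(\mathcal{U},x_0)\le \pi_1^{sg}(X,x_0)\le \pi_1^{gc}(X,x_0)\le H$ (the first inclusion being exactly the hypothesis that $X$ is semilocally small generated, the second coming from Proposition 2.3, and the third from Lemma 2.10 applied to the generalized covering subgroup $H$), thereby exhibiting $H$ as a covering subgroup and recovering the local homeomorphism property of $f$ by the classical covering space theory; but this is merely unpacking Proposition 2.17.
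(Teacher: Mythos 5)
Your proof is correct and follows the same route as the paper: identify $f$ as a generalized covering map via Definition 2.1 and then apply Proposition 2.17 to conclude it is a (semi)covering, hence a local homeomorphism. The alternative argument you sketch at the end is, as you say, just an unpacking of Proposition 2.17, so nothing essentially new is added.
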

\begin{proof}
Recall from Definition 2.1 that $f:Y\rightarrow X$ is a generalized covering map. Hence Proposition 2.17 implies that $f:Y\rightarrow X$ is a semicovering map.
\end{proof}

\begin{corollary} If $H\leq{{\pi }_1}(X,x_0)$, the endpoint projection map $ p_H:\tilde{X}_H\rightarrow X $ has UL property only with respect to paths and $ X $ is semilocally small generated, then $ p_H $ is a covering map.
\end{corollary}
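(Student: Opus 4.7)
The plan is to upgrade the unique path lifting hypothesis on $p_H$ to the full UL property and then to invoke Corollary 2.18. First I would apply the cited result of Brodskiy et al., together with the always-valid reverse inclusion, to conclude that UPL of $p_H$ is equivalent to $H=(p_H)_*\pi_1(\tilde{X}_H,e_H)$, so the hypothesis delivers this equality. The central consequence is that a loop $\alpha$ in $X$ based at $x_0$ lies in $H$ precisely when its standard lift $t\mapsto\langle\alpha_t\rangle_H$ in $\tilde{X}_H$ (which automatically starts at $e_H$) ends at $e_H$.

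Next I would construct a continuous lift for any continuous $f:(Y,y_0)\to(X,x_0)$ from a connected, locally path connected $(Y,y_0)$ with $f_*\pi_1(Y,y_0)\le H$. For $y\in Y$, pick a path $\gamma$ from $y_0$ to $y$ in $Y$ and set $\tilde{f}(y)$ equal to the endpoint of the $p_H$-lift of $f\circ\gamma$ based at $e_H$. Independence of $\gamma$: for two choices $\gamma_1,\gamma_2$, the loop $f\circ(\gamma_1*\gamma_2^{-1})$ represents a class in $f_*\pi_1(Y,y_0)\le H$, and by the first step its standard lift is a loop at $e_H$, forcing the endpoints of the lifts of $f\circ\gamma_1$ and $f\circ\gamma_2$ to coincide. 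Continuity of $\tilde{f}$ is the standard whisker-topology argument: for $y\in Y$ pick a path-connected open $U\ni f(y)$ (using local path connectedness of $X$), then a path-connected open $W\ni y$ with $f(W)\subseteq U$; for any $y'\in W$, a path from $y$ to $y'$ inside $W$ has $f$-image a path in $U$, whose $p_H$-lift starting at $\tilde{f}(y)$ stays in the basic open $(U,\tilde{f}(y))$, so $\tilde{f}(W)\subseteq(U,\tilde{f}(y))$. This gives $p_H$ the UL property, making $p_H$ a generalized covering map.

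Finally, Corollary 2.18 applied to $p_H$, together with the semilocally small generated hypothesis (via Proposition 2.17), yields that $p_H$ is in fact a covering map. The main obstacle is the well-definedness in the second step, where one must convert the membership $[f\circ(\gamma_1*\gamma_2^{-1})]\in H$ into ``the lift is a loop'' using the explicit standard-lift formula $t\mapsto\langle(f\circ(\gamma_1*\gamma_2^{-1}))_t\rangle_H$ on $\tilde{X}_H$; once that identification is in place, the continuity verification is a routine basic-open computation, and the final invocation of Corollary 2.18 is immediate.
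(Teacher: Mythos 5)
Your proposal is correct and follows essentially the same route as the paper: the paper's proof simply asserts that the UL property of $p_H$ follows from its UL property with respect to paths and then invokes Proposition 2.17, whereas you supply the details behind that assertion (the equivalence of UPL with $H=(p_H)_*\pi_1(\tilde{X}_H,e_H)$, the endpoint-of-lift construction of $\tilde{f}$, its well-definedness via $f_*\pi_1(Y,y_0)\le H$, and its continuity in the whisker topology). The only point left implicit is the uniqueness of the lift $\tilde{f}$ required by Definition 2.1, but that is immediate from the UPL hypothesis since $Y$ is path connected.
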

\begin{proof}
Note that $UL$ property of $ p_H:\tilde{X}_H\rightarrow X $ comes from $UL$ property with respect to paths. Hence the result holds by Proposition 2.17.
\end{proof}


\section{On the Whisker Topology and Generalized Covering Maps}

For any pointed topological space $(X,x_{0})$ the whisker topology on the set $\tilde{X}_{e}$ (as shown in Definition 2.7) can be inherited on $\pi_{1}(X,x_{0})$ by the bijection $f:\pi_{1}(X,x_{0})\rightarrow p^{-1}_{e}(x_{0})$ defined by $[\alpha]\rightarrow \langle\alpha\rangle_{e}$, where $e$ is the trivial subgroup of $\pi_{1}(X,x_{0})$. The fundamental group with the whisker topology is denoted by ${{\pi }_1}^{wh}(X,x_0)$. Fischer and Zastrow \cite[Lemma 2.1]{11} have shown that the whisker topology is finer than the inherited topology from the compact-open topology on ${\pi }_1(X,x_0)$ which is denoted by ${{\pi }_1}^{qtop}(X,x_0)$. In this section, we intend to present some properties of ${{\pi }_1}^{wh}(X,x_0)$ in comparison with ${{\pi }_1}^{qtop}(X,x_0)$ in order to classification of generalized covering subgroups.

At first, it seems necessary to characterize open subsets and open subgroups of ${{\pi }_1}^{wh}(X,x_0)$. Let $\left[\alpha \right]\in {\pi }_1\left(X,x_0\right)$, then for any open neighborhood $U$ of $x_0$ there is a bijection ${\varphi }_{\alpha }:i_*{\pi }_1\left(U,x_0\right)\rightarrow \left(U,\left[\alpha \right]\right)\cap {p_e}^{-1}\left(x_0\right)$ defined by ${\varphi }_{\alpha }:\left[\gamma \right]\longmapsto [\alpha *\gamma ]$.

\begin{lemma}
 The collection $ \lbrace [\alpha]i_{\ast}\pi_{1}(U,x_{0}) \ \vert \  [\alpha] \in{\pi_{1}(U,x_{0})}, \ {\mathrm where} \ U \ {\mathrm is}\ {\mathrm an} \ {\mathrm open} \ {\mathrm neighborhood} \ {\mathrm of} \ x_{0} \rbrace$ forms a basis for the whisker topology on ${\pi }_1\left(X,x_0\right)$. Moreover, these basis elements are also closed.
\end{lemma}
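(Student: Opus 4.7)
The plan is to argue that the collection transports the standard whisker-topology basis through the bijection $f:\pi_{1}(X,x_{0})\to p_{e}^{-1}(x_{0})$ and then deduce the closedness from the fact that these basis elements are cosets of a subgroup.

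First, I would pin down what the basis sets look like at a loop. Given a loop $\alpha$ at $x_{0}$ and an open neighborhood $U$ of $x_{0}=\alpha(1)$, the intersection $(U,\langle\alpha\rangle_{e})\cap p_{e}^{-1}(x_{0})$ consists of classes $\langle\alpha\ast\gamma\rangle_{e}$ where $\gamma$ is a path in $U$ with $\gamma(0)=x_{0}$ and $\gamma(1)=x_{0}$, i.e.\ a loop in $U$ at $x_{0}$. Under $f$ this is exactly $[\alpha]\,i_{\ast}\pi_{1}(U,x_{0})$, so the proposed sets are all open and each contains the point $[\alpha]$.

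Second, I would check that every basic open neighborhood of an element of $p_{e}^{-1}(x_{0})$ contains one of the proposed form. Suppose $\langle\alpha\rangle_{e}\in(U,\langle\beta\rangle_{e})\cap p_{e}^{-1}(x_{0})$, where $U$ is an open neighborhood of $\beta(1)$ and $\alpha(1)=x_{0}$. Then $\alpha\simeq\beta\ast\gamma_{0}$ rel endpoints for some path $\gamma_{0}$ in $U$ from $\beta(1)$ to $x_{0}$; in particular $x_{0}\in U$. I claim $[\alpha]\,i_{\ast}\pi_{1}(U,x_{0})\subseteq(U,\langle\beta\rangle_{e})\cap p_{e}^{-1}(x_{0})$: for any loop $\delta$ in $U$ at $x_{0}$, one has $\alpha\ast\delta\simeq\beta\ast(\gamma_{0}\ast\delta)$, and $\gamma_{0}\ast\delta$ is a path in $U$ starting at $\beta(1)$, so $\langle\alpha\ast\delta\rangle_{e}\in(U,\langle\beta\rangle_{e})$. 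This confirms the basis property.

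Third, for the closedness of each $[\alpha]\,i_{\ast}\pi_{1}(U,x_{0})$, I would observe that this is a left coset of the subgroup $i_{\ast}\pi_{1}(U,x_{0})$ of $\pi_{1}(X,x_{0})$. Distinct left cosets are disjoint, and by the first two steps every left coset of $i_{\ast}\pi_{1}(U,x_{0})$ is one of our basis elements, hence open in $\pi_{1}^{wh}(X,x_{0})$. Therefore the complement of $[\alpha]\,i_{\ast}\pi_{1}(U,x_{0})$ is the union of the remaining left cosets, which is open; hence the coset itself is closed.

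The only place requiring genuine care is the second step: translating a generic whisker-basic neighborhood (based at a not-necessarily-loop path $\beta$) into one of the proposed loop-based form. Once one notices that the presence of a point of $p_{e}^{-1}(x_{0})$ in $(U,\langle\beta\rangle_{e})$ forces $x_{0}\in U$, the rest is a short concatenation argument. The closedness is then a purely group-theoretic consequence of the coset structure.
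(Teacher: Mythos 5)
Your proof is correct and follows essentially the same route as the paper: identify $[\alpha]\,i_{\ast}\pi_{1}(U,x_{0})$ with the trace of the basic whisker neighborhood $(U,\langle\alpha\rangle_{e})$ on the fiber $p_{e}^{-1}(x_{0})$, and deduce closedness from the fact that the left cosets of $i_{\ast}\pi_{1}(U,x_{0})$ partition $\pi_{1}(X,x_{0})$ into open sets. Your second step --- reducing a basic neighborhood based at a non-loop path $\beta$ to one of the loop-based form --- is the only part the paper leaves implicit, and you carry it out correctly.
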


\begin{proof}
If $ [\beta]\in{[\alpha]i_{\ast}\pi_{1}(U,x_{0}) }$ for any $[\beta]\in{\pi_{1}(U,x_{0})}$ and some open neighborhood $U$ of $x_{0}$, then it is easy to show that  $[\beta]i_{\ast}\pi_{1}(U,x_{0})=[\alpha]i_{\ast}\pi_{1}(U,x_{0})$  and vise versa. If $[\beta ]\notin \left[\alpha \right]i_*{\pi }_1\left(U,x_0\right)$, then $\left[\beta \right]i_*{\pi }_1\left(U,x_0\right)\cap \left[\alpha \right]i_*{\pi }_1\left(U,x_0\right)=\emptyset $. Therefore, $\left[\alpha \right]i_*{\pi }_1\left(U,x_0\right)$ is closed in ${{\pi }_1}^{wh}\left(X,x_0\right)$.
\end{proof}

Despite ${{\pi }_1}^{wh}(X,x_0)$ is not a topological group in general (it is not even a quasitopological group, for example, in ${{\pi }_1}^{wh}(HE,x_0)$ right translations are not continuous), it has some properties of topological groups. The following propositions state some of them. Recall that a non-empty topological space $X$ is called a $G$-space, for a group $G$, if it is equipped with an action of $G$ on $X$. A {\textit{homogeneous space}} is a $G$-space on which $G$ acts transitively.
\begin{proposition}
 ${{\pi }_1}^{wh}(X,x_0)$ is a homogenous space.
\end{proposition}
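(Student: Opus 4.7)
The plan is to exhibit an action of the group $G = \pi_1(X,x_0)$ on the underlying set of $\pi_1^{wh}(X,x_0)$ by left translations and verify that this action is by homeomorphisms and is transitive. Define, for each $[\gamma] \in \pi_1(X,x_0)$, the map $L_{[\gamma]}:\pi_1^{wh}(X,x_0)\to\pi_1^{wh}(X,x_0)$ by $L_{[\gamma]}([\alpha]) = [\gamma\ast\alpha]$. Transitivity is immediate: given $[\alpha],[\beta]$, the element $[\gamma] = [\beta\ast\alpha^{-1}]$ satisfies $L_{[\gamma]}([\alpha]) = [\beta]$. The functional identities $L_{[\gamma_1]}\circ L_{[\gamma_2]} = L_{[\gamma_1\ast\gamma_2]}$ and $L_{[e_{x_0}]} = \mathrm{id}$ are formal.

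The core step is to check that each $L_{[\gamma]}$ is continuous. Using Lemma 3.1, it suffices to check the preimage of a basic open set $[\alpha]\,i_\ast\pi_1(U,x_0)$ (with $U$ an open neighborhood of $x_0$). One computes
\[
L_{[\gamma]}^{-1}\bigl([\alpha]\,i_\ast\pi_1(U,x_0)\bigr) \;=\; \{\,[\beta]\,:\,[\gamma\ast\beta]\in [\alpha]\,i_\ast\pi_1(U,x_0)\,\} \;=\; [\gamma^{-1}\ast\alpha]\,i_\ast\pi_1(U,x_0),
\]
where in the last equality one uses the coset description from Lemma 3.1 (any representative of the coset generates the same basic open set). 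Hence the preimage is again a basic open set in $\pi_1^{wh}(X,x_0)$, so $L_{[\gamma]}$ is continuous. Its two-sided inverse is $L_{[\gamma^{-1}]}$, which is continuous by the same argument, so $L_{[\gamma]}$ is a homeomorphism.

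Combining these steps, $\pi_1(X,x_0)$ acts on $\pi_1^{wh}(X,x_0)$ by homeomorphisms transitively, which is exactly the definition of a homogeneous space.

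The conceptual hazard, rather than a technical obstacle, is to remember to act by \emph{left} translations only. The remark preceding the proposition explicitly warns that right translations need not be continuous in $\pi_1^{wh}(X,x_0)$ (already for the Hawaiian earring), so the proof must avoid any appeal to right multiplication. The reason left translation behaves well while right translation does not is visible in the preimage computation above: a basic neighborhood has the form $[\alpha]\,i_\ast\pi_1(U,x_0)$, with the ``whisker'' part $[\alpha]$ on the \emph{left}, so left multiplication permutes the basic neighborhoods, whereas pushing a $[\gamma]$ through on the right would require conjugating the subgroup $i_\ast\pi_1(U,x_0)$, which need not stay inside any prescribed basic neighborhood.
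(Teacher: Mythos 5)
Your proof is correct and follows essentially the same route as the paper: act by left translations, verify continuity against the basic open sets $[\alpha]\,i_\ast\pi_1(U,x_0)$ from Lemma 3.1, and note that the inverse of a left translation is again a left translation. The only cosmetic difference is that you compute preimages of basic open sets while the paper checks continuity pointwise by exhibiting a basic neighborhood mapped into a given one; the substance is identical.
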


\begin{proof}
Clearly ${{\pi }_1}^{wh}(X,x_0)$ acts on itself by translation. To show transitivity of this action, it is enough to show that the left translations are homeomorphism \cite[Corollary 1.3.6]{1}. Let ${\varphi }_{\alpha }:{{\pi }_1}^{wh}\left(X,x_0\right)\rightarrow {{\pi }_1}^{wh}\left(X,x_0\right)$ with $\left[\beta \right]\longmapsto [\alpha *\beta ]$, for any $ \alpha\in{\pi_{1}^{wh}(X,x_{0})}$, be a left translation. Clearly $\ {\varphi }_{\alpha }$ is a bijection. We show that ${\varphi }_{\alpha }$ is continuous. Let $U$ be an open neighborhood of $x_0$ in $X$ and $[\alpha *\beta ]\ i_*{\pi }_1(U,x_0)$ be an open basis neighborhood of $[\alpha *\beta ]$ in ${{\pi }_1}^{wh}(X,x_0)$. It is clear that $[\beta ]\ i_*{\pi }_1(U,x_0)$ is an open basis neighborhood of $[\beta ]$ and ${\varphi }_{\alpha }(\left[\beta \right]i_*{\pi }_1\left(U,x_0\right))\subseteq [\alpha *\beta ]\ i_*{\pi }_1(U,x_0)$.
\end{proof}

An immediate consequence of the above proposition is that every open subgroup of $\pi_{1}^{wh}(X,x_{0})$ is also closed. Recall that a topological space is called {\textit{totally separated}} if for every pair of disjoint points there exists a clopen subset which contains one of the points and doesn't contain the other. The following proposition states separation axioms for ${{\pi }_1}^{wh}\left(X,x_0\right)$.
\begin{proposition}
 For a connected locally path connected space $X$, the following statements are equivalent:
\begin{enumerate}
\item  ${{\pi }_1}^{wh}\left(X,x_0\right)$ is $T_0$.
\item  ${{\pi }_1}^{wh}\left(X,x_0\right)$ is $T_1$.
\item  ${{\pi }_1}^{wh}\left(X,x_0\right)$ is $T_2$.
\item  ${{\pi }_1}^{wh}\left(X,x_0\right)$ is $T_3$ ($T_3=regular+T_1)$.
\item  ${{\pi }_1}^s\left(X,x_0\right)=1$.
\item  ${{\pi }_1}^{wh}\left(X,x_0\right)$ is totally separated.
\end{enumerate}
Moreover, ${{\pi }_1}^{wh}\left(X,x_0\right)$ is regular, in general.
\end{proposition}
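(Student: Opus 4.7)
The approach rests on two structural facts established in Lemma 3.1: the basic whisker neighborhoods $[\alpha]i_{\ast}\pi_{1}(U,x_{0})$ are clopen cosets and hence partition $\pi_{1}(X,x_{0})$, and the intersection $\bigcap_{U}i_{\ast}\pi_{1}(U,x_{0})$ (over open neighborhoods $U$ of $x_{0}$) equals $\pi_{1}^{s}(X,x_{0})$ by the very definition of a small loop. Together these reduce every separation question for $\pi_{1}^{wh}(X,x_{0})$ to an algebraic question about the subgroup of small loops.

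First I would dispose of the ``moreover'' regularity clause, since it is then reused in the $T_{2}\Rightarrow T_{3}$ step. Given $[\alpha]\in\pi_{1}^{wh}(X,x_{0})$ and a closed set $C$ with $[\alpha]\notin C$, the open set $\pi_{1}(X,x_{0})\setminus C$ is a neighborhood of $[\alpha]$ and hence contains a basic clopen $V=[\alpha]i_{\ast}\pi_{1}(U,x_{0})$. Then $V$ and its complement are disjoint open sets that separate $[\alpha]$ from $C$, so $\pi_{1}^{wh}(X,x_{0})$ is regular without any hypothesis on $X$.

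The implications $(4)\Rightarrow(3)\Rightarrow(2)\Rightarrow(1)$ and $(6)\Rightarrow(3)$ are formal. For the crucial $(1)\Rightarrow(5)$: if $[\gamma]\in\pi_{1}^{s}(X,x_{0})$, then $[\gamma]\in i_{\ast}\pi_{1}(U,x_{0})$ for every such $U$, and by the coset form every basic neighborhood $[\gamma]i_{\ast}\pi_{1}(U,x_{0})$ of $[\gamma]$ coincides with $i_{\ast}\pi_{1}(U,x_{0})$ and therefore contains $[e]$; so $[e]$ and $[\gamma]$ cannot be $T_{0}$-separated, forcing $[\gamma]=[e]$. For $(5)\Rightarrow(6)$: given $[\alpha]\neq[\beta]$, the class $[\alpha^{-1}\ast\beta]$ is nontrivial and hence not small, so some $i_{\ast}\pi_{1}(U,x_{0})$ misses it, and then the clopen coset $[\alpha]i_{\ast}\pi_{1}(U,x_{0})$ contains $[\alpha]$ but not $[\beta]$. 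Finally $(3)\Rightarrow(4)$ is immediate from $T_{2}\Rightarrow T_{1}$ combined with the regularity already in hand.

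The only real subtlety is $(1)\Rightarrow(5)$: one must recognize that the coset form of the basis makes a small loop topologically indistinguishable from $[e]$ even in the $T_{0}$ sense, which is precisely the bridge from a weak topological hypothesis to a clean algebraic conclusion. Everything else is a bookkeeping exercise built on the clopen-coset structure of Lemma 3.1.
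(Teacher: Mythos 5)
Your proof is correct and rests on the same foundation as the paper's: Lemma 3.1's clopen coset basis, which converts every separation question into the question of whether a nontrivial class can lie in $i_{*}\pi_{1}(U,x_{0})$ for all $U$, i.e.\ be a small loop. The differences are organizational rather than substantive. You run the cycle $(1)\Rightarrow(5)\Rightarrow(6)\Rightarrow(3)\Rightarrow(2)\Rightarrow(1)$ together with $(3)\Leftrightarrow(4)$, whereas the paper proves $(1)\Rightarrow(2)\Rightarrow(3)\Rightarrow(5)\Rightarrow(1)$ and $(1)\Leftrightarrow(6)$ separately; both cycles close, and your $(1)\Rightarrow(5)$ (a small loop is topologically indistinguishable from the identity because every basic neighborhood of either point is some $i_{*}\pi_{1}(U,x_{0})$ containing both) is the same observation the paper packages as $(5)\Rightarrow(1)$ in contrapositive form. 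The one place you genuinely improve on the paper is the regularity claim: the paper constructs the saturated open set $Ai_{*}\pi_{1}(V,x_{0})=\bigcup\{[f]i_{*}\pi_{1}(V,x_{0}):[f]\in A\}$ and verifies by a coset computation that it misses $[g]i_{*}\pi_{1}(V,x_{0})$, while you simply note that a basic neighborhood $V$ of $[g]$ inside the complement of $A$ is clopen, so $V$ and $\pi_{1}(X,x_{0})\setminus V$ already separate the point from the closed set. Your version is shorter and makes the role of Lemma 3.1's ``basis elements are also closed'' clause more transparent; the paper's version has the minor advantage of exhibiting an explicit saturated neighborhood of $A$, but nothing downstream uses that.
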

\begin{proof} $1)\Rightarrow 2)$. Let $\left[g\right],[f]\in {{\pi }_1}^{wh}(X,x_0)$ and $U$ be an open neighborhood of $x_0$ in $X$ such that $\left[g\right]\notin [f]i_*{\pi }_1(U,x_0)$. We show that $\left[f\right]\notin [g]i_*{\pi }_1(U,x_0)$. By contrary suppose $\left[f\right]\in [g]i_*{\pi }_1(U,x_0)$, then there exists a loop $\gamma $ at $x_0$ with $\gamma (I)\subseteq U$ such that $\left[f\right]=\left[g*\gamma \right]$ and so $[g]=\left[f*{\gamma }^{-1}\right]$. Thus, it shows that $\left[g\right]\in [f]i_*{\pi }_1(U,x_0)$ which is a contradiction.

$2)\Rightarrow 3)$. Let $\left[g\right],[f]\in {{\pi }_1}^{wh}(X,x_0)$ and $U,V$ be open neighborhoods of $x_0$ in $X$ such that $\left[g\right]\notin [f]i_*{\pi }_1(V,x_0)$ and $\left[f\right]\notin [g]i_*{\pi }_1(U,x_0)$. Put $W=U\cap V$ and suppose that  $\left[g\right]i_*{\pi }_1\left(W,x_0\right)\cap \left[f\right]i_*{\pi }_1\left(W,x_0\right)\ne \emptyset $. Thus, there exist $\left[{\mu }_1\right],\ [{\mu }_2]\in i_*{\pi }_1\left(W,x_0\right)$ with $\left[f*{\mu }_1\right]=[g*{\mu }_2]$. Hence $\left[f\right]=[g*{\mu }_2*{{\mu }_1}^{-1}]\in \left[g\right]i_*{\pi }_1\left(W,x_0\right)$ which contradicts to the hypothesis. Therefore, $\left[g\right]i_*{\pi }_1\left(W,x_0\right)\cap \left[f\right]i_*{\pi }_1\left(W,x_0\right)=\emptyset $.

$3)\Rightarrow 5)$. Let $1\ne [g]\in {{\pi }_1}^{wh}(X,x_0)$, then there exist open neighborhoods $U,V$ of $x_0$ in $X$ such that $[g]i_*{\pi }_1(U,x_0)\cap i_*{\pi }_1(V,x_0)=\emptyset $. Thus $[g]\notin i_*{\pi }_1(V,x_0)$. It implies that $g$ is not a small loop at $x_0$. Therefore ${{\pi }_1}^s(X,x_0)=1$.

$5)\Rightarrow 1)$. Let $[g],[f]\in {{\pi }_1}^{wh}(X,x_0)$ and $[g]\ne [f]$. Thus $[f*g^{-1}]\ne 1$. Since ${{\pi }_1}^s(X,x_0)=1$, $[f*g^{-1}]$ is not a small loop at $x_0$. Therefore, there exists an open neighborhood $V$ of $x_0$ in $X$ such that $[f*g^{-1}]\notin i_*{\pi }_1(V,x_0)$, i.e., $[f]\notin [g]i_*{\pi }_1(V,x_0)$.

$1)\leftrightarrow 6)$. By Lemma 3.1  every $[g]i_*{\pi }_1(V,x_0)$ is a clopen subset of ${{\pi }_1}^{wh}(X,x_0)$. Hence if ${{\pi }_1}^{wh}(X,x_0)$ is $T_0$, then it is totally separated. The converse holds, in general. To show that 4) is equivalent to the other statements, it is enough to show that ${{\pi }_1}^{wh}(X,x_0)$  is regular, in general. Let $A$ be a closed subset of ${{\pi }_1}^{wh}(X,x_0)$ and $[g]\notin A$. By definition of closed subsets, there exists an open neighborhood $V$ of $x_0$ in $X$ such that $[g]i_*{\pi }_1(V,x_0)\cap A=\emptyset $. Define
\[Ai_*{\pi }_1(V,x_0)=\cup \{[f]i_*{\pi }_1(V,x_0):[f]\in A\}.\]
Clearly $Ai_*{\pi }_1(V,x_0)$ is an open subset of ${{\pi }_1}^{wh}(X,x_0)$ which contains $A$. It is enough to show that $[g]i_*{\pi }_1(V,x_0)\cap (Ai_*{\pi }_1(V,x_0))=\emptyset $. Suppose that there exist $[{\mu }_1],\ [{\mu}_2]\in i_*{\pi }_1(V,x_0)$ and $[f]\in A$ such that $[g*{\mu }_1]=[f*{\mu }_2]$. Then $[f]=[g*{\mu }_1*{{\mu }_2}^{-1}]\in [g]i_*{\pi }_1(V,x_0)$ a contradiction.
\end{proof}

  It is  well-known that fibers of a covering map are homeomorphic. This fact may not be true in the case of generalized covering maps. As an example, consider the Hawaiian earring that has universal generalized covering map (see \cite[Example 4.15]{11}). Fischer and Zastrow \cite{11} showed that the fibers of this universal generalized covering map are not homeomorphic. This fact implies also from \cite[Proposition 4.21]{8} which states that ${{\pi }_1}^{wh}(X,x_0)$ is discrete if and only if $X$ is semilocally simply connected at $x_0$. We know that the fiber of $ x_0 $ of the generalized universal covering map is homeomorphic to ${{\pi }_1}^{wh}(X,x_0)$. Let $x_0 \in HE$ be the wedge point of all circles in $HE$. Clearly $HE$ is semilocally simply connected at each of its points but $x_0 \in HE$. Then ${{\pi }_1}^{wh}(HE,x_0)$ is not discrete and ${{\pi }_1}^{wh}(HE,x)$ is discrete in each other point $x \in HE$. This shows that the fibers $p^{-1}(x_0)$ and $p^{-1}(x)$ are not homeomorphic in the whisker topology. Fischer and Zastrow also showed that for generalized universal covering maps, fibers have the same cardinality. In the following proposition, we extend the result for any subgroup of ${\pi }_1(X,x_0)$.

\begin{proposition}
For a connected locally path connected space $X,$ let $H{\le \pi }_1(X,x_0)$ and $p_H:{\tilde{X}}_H\rightarrow X$ be the endpoint projection ($p_H({\langle \alpha \rangle }_H)=\alpha (1)$). Then all fibers over each points of $X$ have the same cardinality.
\end{proposition}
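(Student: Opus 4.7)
The plan is to construct, for each $x \in X$, an explicit bijection between $p_H^{-1}(x_0)$ and $p_H^{-1}(x)$, using the path connectedness of $X$ to supply a connecting path.

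First I would fix an arbitrary point $x \in X$. Since $X$ is (path) connected, choose a path $\gamma \in P(X,x_0)$ with $\gamma(1)=x$. Observe that $p_H^{-1}(x_0)$ consists of equivalence classes $\langle \alpha \rangle_H$ where $\alpha$ is a loop at $x_0$, while $p_H^{-1}(x)$ consists of classes $\langle \beta \rangle_H$ with $\beta(1)=x$. I would then define a concatenation map
\[
\Phi_\gamma : p_H^{-1}(x_0) \longrightarrow p_H^{-1}(x), \qquad \langle \alpha \rangle_H \longmapsto \langle \alpha * \gamma \rangle_H.
\]

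The next step is to check that $\Phi_\gamma$ is well-defined, injective, and surjective, all of which reduce to simple manipulations of the equivalence relation $\sim_H$ from Definition 2.7. For well-definedness and injectivity, if $\alpha,\alpha'$ are loops at $x_0$, then $\alpha \sim_H \alpha'$ is equivalent to $[\alpha * (\alpha')^{-1}] \in H$, which in turn is equivalent to
\[
[(\alpha*\gamma) * (\alpha'*\gamma)^{-1}] = [\alpha * \gamma * \gamma^{-1} * (\alpha')^{-1}] = [\alpha * (\alpha')^{-1}] \in H,
\]
that is, $\alpha*\gamma \sim_H \alpha'*\gamma$. For surjectivity, given $\langle \beta \rangle_H \in p_H^{-1}(x)$, set $\alpha := \beta * \gamma^{-1}$, a loop at $x_0$; then $\alpha*\gamma \simeq \beta$ rel endpoints, so $\langle \alpha*\gamma \rangle_H = \langle \beta \rangle_H$, giving $\Phi_\gamma(\langle \alpha \rangle_H)=\langle \beta \rangle_H$.

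There is essentially no obstacle here: the argument is entirely formal once one writes down the candidate map, and no topological input beyond path connectedness of $X$ is needed (local path connectedness is not used in the cardinality statement). I would note in passing that different choices of the connecting path $\gamma$ yield, in general, different bijections, and that the bijection need not be a homeomorphism when the fibers are given the subspace topology from the whisker topology, which is precisely why the Hawaiian earring example discussed just before the proposition is not a counterexample.
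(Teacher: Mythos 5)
Your proof is correct and follows essentially the same route as the paper: the paper defines the very same concatenation map $\langle\alpha\rangle_H\mapsto\langle\alpha*\gamma\rangle_H$ along a connecting path and states that it is a well-defined bijection. You merely supply the routine verification and the (accurate) remarks about dependence on $\gamma$ and the failure of the bijection to be a homeomorphism.
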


\begin{proof}
 Let $x,y\in X$ and define $ \psi:{(p_H)}^{-1}(x)\rightarrow {(p_H)}^{-1}(y) $ by ${\langle \alpha \rangle }_H\longmapsto {\langle \alpha *\gamma \rangle }_H $, where $\gamma :I\rightarrow X$ is a path from \textit{x} to \textit{y}. It is easy to show that $ \psi $ is a well-defined bijection.
\end{proof}

\begin{corollary} Let $H\le {\pi }_1(X,x_0)$, then $|{p_H}^{-1}(x_0)|$=$[\pi _{1}(X,x_{0}):H]$.
\end{corollary}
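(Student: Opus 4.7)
The plan is to exhibit an explicit bijection between the fiber $p_H^{-1}(x_0)$ and the set of left cosets $\pi_1(X,x_0)/H$, from which the cardinality equality follows immediately.

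First, I would observe that $p_H^{-1}(x_0)$ is, by the definition of $p_H$, precisely the set of equivalence classes $\langle\alpha\rangle_H$ with $\alpha(1)=x_0$; that is, equivalence classes (under $\sim_H$) of loops based at $x_0$. This identifies the fiber with a quotient of the loop space, which already resembles a quotient of $\pi_1(X,x_0)$.

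Next, define $\Phi : p_H^{-1}(x_0) \longrightarrow \pi_1(X,x_0)/H$ by $\Phi(\langle\alpha\rangle_H)=[\alpha]H$. The key step is to verify that $\Phi$ is well-defined and injective in a single stroke: by Definition 2.7, for loops $\alpha_1,\alpha_2$ at $x_0$ we have $\langle\alpha_1\rangle_H=\langle\alpha_2\rangle_H$ if and only if $[\alpha_1\ast\alpha_2^{-1}]\in H$, which, rewritten as $[\alpha_1][\alpha_2]^{-1}\in H$, is exactly the condition $[\alpha_1]H=[\alpha_2]H$. Surjectivity is immediate: given a coset $[\alpha]H$ with $\alpha$ any loop representative at $x_0$, the class $\langle\alpha\rangle_H$ lies in $p_H^{-1}(x_0)$ and maps to $[\alpha]H$.

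Since $\Phi$ is a bijection, $|p_H^{-1}(x_0)|=|\pi_1(X,x_0)/H|=[\pi_1(X,x_0):H]$, as desired. There is no real obstacle here; the statement is essentially a tautological consequence of how the equivalence relation $\sim_H$ encodes the coset relation, and one simply has to line up the two definitions. (Combined with Proposition 3.4, this also yields that every fiber of $p_H$ has cardinality $[\pi_1(X,x_0):H]$.)
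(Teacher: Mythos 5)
Your overall strategy is exactly the intended one: the paper states this corollary without a separate proof precisely because the fiber $p_H^{-1}(x_0)$ consists of the classes $\langle\alpha\rangle_H$ of loops at $x_0$, and these classes are in canonical bijection with the cosets of $H$; later in Section~3 the paper also records an orbit--stabilizer version of the same count, $|p_H^{-1}(x_0)|=|O(e_H)|=[\pi_1(X,x_0):\pi_1(X,x_0)_{e_H}]=[\pi_1(X,x_0):H]$. So the content of your argument matches the paper's.

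There is, however, a small but genuine slip in the key step. The relation of Definition~2.7 gives $\langle\alpha_1\rangle_H=\langle\alpha_2\rangle_H$ iff $[\alpha_1\ast\alpha_2^{-1}]=[\alpha_1][\alpha_2]^{-1}\in H$, and for a general (non-normal) $H$ the condition $gh^{-1}\in H$ is equivalent to the \emph{right}-coset equality $Hg=Hh$, not to $gH=hH$ (the latter is $h^{-1}g\in H$). As written, your map $\Phi(\langle\alpha\rangle_H)=[\alpha]H$ is therefore not well defined unless $H$ is normal. The fix is immediate: send $\langle\alpha\rangle_H$ to $H[\alpha]$ (or to $[\alpha^{-1}]H$), which is a genuine bijection onto the set of right cosets, and then use that left and right cosets are equinumerous, so $|H\backslash\pi_1(X,x_0)|=[\pi_1(X,x_0):H]$. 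With that one-line correction the proof is complete and agrees with the paper.
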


Virk and Zastrow \cite{22} presented an example of a space $X$ which is homotopically Hausdorff but doesn't admit a generalized universal covering. In the following theorem, we show that a sufficient condition on a homotopically Hausdorff relative to $H$ space to admit $H$ as a generalized covering subgroup is that $H$ to be a dense subgroup in ${{\pi }_1}^{wh}(X,x_0)$. We denote the UL property with respect to paths by $UPL$.

\begin{theorem} Let $X$ be homotopically Hausdorff relative to $H$ and $H$ be a dense subgroup of ${{\pi }_1}^{wh}(X,x_0)$. Then $p_H:{\tilde{X}}_H\rightarrow X$ has UPL property.
\end{theorem}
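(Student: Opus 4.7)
My plan is to reduce UPL to the algebraic condition $(p_H)_{\ast}\pi_1(\tilde{X}_H,\tilde{e}_H)\le H$ (by Brodskiy et al.'s Proposition 2.18, invoked immediately before Lemma 2.10 of the excerpt) and then to establish that containment by a direct contradiction. So let $\tilde{F}:I\to\tilde{X}_H$ be any loop at $\tilde{e}_H$, set $f:=p_H\circ\tilde{F}$, and suppose towards contradiction that $[f]\notin H$.

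The homotopical Hausdorffness of $X$ relative to $H$, in the coset form that the paper itself employs in the proof of Proposition 2.3, gives: for every $g\notin H$ and every path $\alpha$ from $x_0$ there is an open neighborhood $U_g$ of $\alpha(1)$ such that no loop $\gamma$ in $U_g$ at $\alpha(1)$ satisfies $[\alpha\ast\gamma\ast\alpha^{-1}]\in Hg$. Applying this with the constant path $\alpha=c_{x_0}$ and $g=[f]$ produces an open neighborhood $V$ of $x_0$ with $H[f]\cap i_{\ast}\pi_1(V,x_0)=\emptyset$, equivalently $[f]\notin H\cdot i_{\ast}\pi_1(V,x_0)$.

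On the other hand, Lemma 3.1 says the sets $[\lambda]\cdot i_{\ast}\pi_1(U,x_0)$ form a basis for the whisker topology on $\pi_1(X,x_0)$, so density of $H$ in $\pi_1^{wh}(X,x_0)$ means that every such basic neighborhood of every point meets $H$. Applied to $[f]\cdot i_{\ast}\pi_1(V,x_0)$, this yields $h\in H$ and $[\sigma]\in i_{\ast}\pi_1(V,x_0)$ with $h=[f][\sigma]$, whence $[f]=h[\sigma]^{-1}\in H\cdot i_{\ast}\pi_1(V,x_0)$ --- contradicting the previous paragraph. Hence $[f]\in H$, and UPL follows. The only real subtlety is to use the coset form of hom Hausdorff rel $H$ (as in Proposition 2.3), not merely its weaker single-element version; under the coset form both hypotheses are naturally phrased against the same basic whisker neighborhoods $[\lambda]\cdot i_{\ast}\pi_1(U,x_0)$ and the contradiction is immediate.
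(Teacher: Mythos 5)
Your proof is correct, and its engine is exactly the paper's: homotopical Hausdorffness relative to $H$ produces a basic whisker neighborhood of an element outside $H$ that misses $H$, which is incompatible with density. The difference is only in the setup. The paper argues straight from the definition of UPL: it takes a path with a lift $\beta$ distinct from the standard lift $\widetilde{\alpha}$, picks $t_0$ with $\beta(t_0)\neq\widetilde{\alpha}(t_0)$, and feeds the resulting loop class $[\alpha_{t_0}\ast\beta_{t_0}^{-1}]\notin H$ into the same coset computation ($i_*\pi_1(U,x_0)\cap gH=\emptyset$ versus density of $H$ in every $[g]i_*\pi_1(U,x_0)$). You instead route through the equivalence, quoted in the paper just before Lemma~2.10 and reproved in one direction inside that lemma, that UPL for $p_H$ is the containment $(p_H)_*\pi_1(\widetilde{X}_H,\widetilde{e}_H)\leq H$, and then run the identical contradiction on the class of a projected loop. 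Your version is slightly cleaner --- it needs only the constant-path instance of the relative Hausdorff condition and no bookkeeping with partial lifts --- at the cost of importing an external equivalence; the paper's version is self-contained. One observation applies to both arguments and is worth registering: neither actually uses that the offending class arises from a lift, since the contradiction is reached for an \emph{arbitrary} $[f]\notin H$. In other words, the two hypotheses together force $H=\pi_1(X,x_0)$ (density plus closedness, the latter by Proposition~3.10), so the theorem holds for a rather degenerate reason; your write-up makes this more visible than the paper's does.
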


\begin{proof} Suppose $p_H:{\tilde{X}}_H\rightarrow X$ doesn't have UPL property and let $\alpha \epsilon P(X,x_0)$ be a path with a lift $\beta :I\to {\tilde{X}}_H,\ \ \ \beta (t)={\langle {\beta }_t\rangle }_H$, $\beta (0)={\tilde{x}}_H=\widetilde{\alpha }(0)$ distinct from its standard lift $\widetilde{\alpha }(t)={\langle {\alpha }_t\rangle }_H$. Then there is $t_0\in (0,1]$ such that $\beta (t_0)\ne \widetilde{\alpha }(t_0)$. Thus ${\langle {\alpha }_{t_0}\rangle }_H\ne {\langle {\beta }_{t_0}\rangle }_H$ and so $[{\alpha }_{t_0}*{{\beta }_{t_0}}^{-1}]$$\notin $$H$. Since $X$ is homotopically Hausdorff relative to $H$, there is an open neighborhood $U$ of $x_{0}$ such that $i_{\ast}\pi_{1}(U,x_{0}) \cap [\alpha_{t_{0}} \ast \beta_{t_{0}}^{-1}]H=\emptyset$ which implies $[\beta_{t_{0}} \ast \alpha_{t_{0}}^{-1}]i_{\ast}\pi_{1}(U,x_{0}) \cap H=\emptyset $. This contradicts to the density of $H$ in $\pi_{1}^{wh}(X,x_{0})$.
\end{proof}

\begin{corollary}
 Let $H$ be a dense subgroup of $\pi_{1}^{wh}(X,x_{0})$, then the map $p_{H}:\tilde{X}_{H}\rightarrow X$ is a generalized covering map (or equivalently $H$ is a generalized covering subgroup) if and only if $X$ is homotopically Hausdorff relative to $H$.
\end{corollary}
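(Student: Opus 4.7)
The plan is to split the biconditional into two directions, treating the $(\Rightarrow)$ direction as a direct citation and concentrating the real work on the $(\Leftarrow)$ direction, where Theorem 3.6 does the heavy lifting. For the $(\Rightarrow)$ direction, I would simply invoke \cite[Proposition 6.4]{11} (alternatively \cite[Lemma 5.9]{3}): any generalized covering subgroup $H$ forces $X$ to be homotopically Hausdorff relative to $H$. This is precisely the ingredient already exploited inside the proof of Proposition 2.3 of the present paper.

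For the $(\Leftarrow)$ direction, assume $X$ is homotopically Hausdorff relative to $H$ and $H$ is dense in $\pi_{1}^{wh}(X,x_{0})$. The plan is a three-step chain. First, apply Theorem 3.6 to deduce that $p_{H}:\tilde{X}_{H}\rightarrow X$ has the UPL property. Second, promote this to the equality $H=(p_{H})_{\ast}\pi_{1}(\tilde{X}_{H},e_{H})$. The inclusion $H\leq (p_{H})_{\ast}\pi_{1}(\tilde{X}_{H},e_{H})$ is automatic: for $[\alpha]\in H$ one has $\langle\alpha\rangle_{H}=e_{H}$ by the definition of $\sim_{H}$, so the standard lift $\tilde{\alpha}(t)=\langle\alpha_{t}\rangle_{H}$ is a loop at $e_{H}$ with $(p_{H})_{\ast}[\tilde{\alpha}]=[\alpha]$. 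The reverse inclusion $(p_{H})_{\ast}\pi_{1}(\tilde{X}_{H},e_{H})\leq H$ is exactly the content of \cite[Proposition 2.18]{7}, already quoted in the paragraph preceding Lemma 2.10. Third, invoke \cite[Lemma 5.9]{3}, which asserts that the equality $H=(p_{H})_{\ast}\pi_{1}(\tilde{X}_{H},e_{H})$ is equivalent to $p_{H}$ being a generalized covering map, i.e.\ to $H$ being a generalized covering subgroup.

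The principal technical hurdle is entirely absorbed by Theorem 3.6: the step from ``homotopically Hausdorff relative to $H$'' to ``$p_{H}$ has UPL'' is where the density hypothesis is truly used, since without density the contradiction at the end of that proof (the set $[\beta_{t_{0}}\ast\alpha_{t_{0}}^{-1}]i_{\ast}\pi_{1}(U,x_{0})$ must meet $H$) cannot be obtained. After Theorem 3.6, everything that remains is routine bookkeeping against the characterizations of \cite[Proposition 2.18]{7} and \cite[Lemma 5.9]{3}. As a sanity check, the example in \cite{22} (also referenced in Example~2.4) shows that dropping density is essential: there one has $X$ homotopically Hausdorff while $p_{H}$ fails to be a generalized covering for $H=1$, so the corollary cannot hold without the density hypothesis on $H$.
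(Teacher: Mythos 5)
Your proposal is correct and follows exactly the route the paper intends: the corollary is stated as an immediate consequence of Theorem 3.6, with the $(\Rightarrow)$ direction being the standard fact that generalized covering subgroups force $X$ to be homotopically Hausdorff relative to $H$ (\cite[Proposition 6.4]{11}, \cite[Lemma 5.9]{3}), and the $(\Leftarrow)$ direction being Theorem 3.6 upgraded to the equality $H=(p_H)_*\pi_1(\tilde{X}_H,e_H)$ via \cite[Proposition 2.18]{7} and \cite[Lemma 5.9]{3}. Your filled-in bookkeeping (the automatic inclusion $H\leq (p_H)_*\pi_1(\tilde{X}_H,e_H)$ and the counterexample justifying the density hypothesis) matches the facts already recorded in Section 2 of the paper.
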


 Torabi et al. \cite[Theorem 2.2]{19} showed that the closure of the trivial subgroup of $\pi_{1}^{qtop}(X,x_{0})$ contains $\pi_{1}^{sg}(X,x_{0})$. In the whisker topology ${{\pi }_1}^{wh}(X,x_0)$, this closure is equal to  $\pi_{1}^{s}(X,x_{0})$, the collection of all small loops at $x_0$.
\begin{proposition}
The closure of the trivial element in ${{\pi }_1}^{wh}(X,x_0)$ equals ${{\pi }_1}^s(X,x_0)$.
\end{proposition}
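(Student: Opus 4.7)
The plan is to use the explicit description of the basic open neighborhoods in ${{\pi}_1}^{wh}(X,x_0)$ provided by Lemma 3.1 and then unwind the definitions of closure and of a small loop; both inclusions will then be essentially formal.

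First I would recall that by Lemma 3.1, a basic open neighborhood of an element $[\alpha]\in{{\pi}_1}^{wh}(X,x_0)$ has the form $[\alpha]\,i_*\pi_1(U,x_0)$ where $U$ ranges over open neighborhoods of $x_0$. Consequently $[\alpha]\in\overline{\{e\}}$ iff every such neighborhood of $[\alpha]$ meets $\{e\}$, i.e., iff for every open neighborhood $U$ of $x_0$ there exists $[\gamma]\in i_*\pi_1(U,x_0)$ with $[\alpha]\cdot[\gamma]=e$. The latter condition is equivalent to saying that $[\alpha]=[\gamma^{-1}]$ with $\gamma^{-1}$ a loop in $U$, which by definition says precisely that $\alpha$ is a small loop at $x_0$.

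For the forward inclusion $\overline{\{e\}}\subseteq\pi_1^s(X,x_0)$, I would take $[\alpha]\in\overline{\{e\}}$ and, for each open neighborhood $U$ of $x_0$, extract a loop in $U$ homotopic to $\alpha$ exactly as above; since $U$ was arbitrary, $\alpha$ is small. For the reverse inclusion, given $[\alpha]\in\pi_1^s(X,x_0)$ and any basic neighborhood $[\alpha]\,i_*\pi_1(U,x_0)$, choose a loop $\gamma:I\to U$ with $[\gamma]=[\alpha]$; then $[\gamma^{-1}]\in i_*\pi_1(U,x_0)$ and $[\alpha]\cdot[\gamma^{-1}]=e$, so $e\in[\alpha]\,i_*\pi_1(U,x_0)$. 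Hence every basic neighborhood of $[\alpha]$ contains $e$, giving $[\alpha]\in\overline{\{e\}}$.

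There is no substantial obstacle here; the only point requiring a little care is making sure that the identification of the equivalence ``$e\in[\alpha]\,i_*\pi_1(U,x_0)$ for all $U$'' with ``$\alpha$ is a small loop'' uses exactly the definition of $\pi_1^s(X,x_0)$ recalled in the introduction (loops that, up to homotopy, lie in every neighborhood of $x_0$), rather than some a priori weaker or stronger variant. Once that bookkeeping is in place, the two inclusions combine to give the equality $\overline{\{e\}}=\pi_1^s(X,x_0)$.
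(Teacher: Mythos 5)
Your proof is correct and follows essentially the same route as the paper: both arguments reduce membership of $[\alpha]$ in $\overline{\{e\}}$ to the condition $e\in[\alpha]\,i_*\pi_1(U,x_0)$ for every neighborhood $U$ of $x_0$, observe via Lemma 3.1 that this is equivalent to $[\alpha]\in i_*\pi_1(U,x_0)$, and identify $\bigcap_U i_*\pi_1(U,x_0)$ with $\pi_1^s(X,x_0)$. The only difference is presentational: the paper states the intermediate step as $[\alpha]\,i_*\pi_1(U,x_0)=i_*\pi_1(U,x_0)$, while you unwind it element by element.
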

\begin{proof} If for any $[f]\in {{\pi }_1}^{wh}(X,x_0)$ and each neighborhood $V$ of $x_0$ in $X$, $1\in [f]i_*{\pi }_1(V,x_0)$, then $[f]i_*{\pi }_1(V,x_0)=i_*{\pi }_1(V,x_0)$. Thus the closure of the trivial subgroup of the fundamental group equipped with  the whisker topology equals with the intersection of all $ i_*{\pi }_1(V,x_0) $, where $ V $ is an open neighborhood of $ x_0 $ in $ X $. Clearly, this intersection only contains small loops at $x_0$.
\end{proof}

If $H\le {{\pi }_1}^{qtop}(X,x_0)$ is a closed subgroup, then $p_H:{\tilde{X}}_H\rightarrow X$ has ${UPL}$ property \cite[Theorem 11]{4}. This result will be changed when the fundamental group equipped with the whisker topology.

\begin{proposition}
 If $X$ is a homotopically Hausdorff relative to $H$ space, then $H$ is a closed subgroup of ${{\pi }_1}^{wh}(X,x_0)$.
\end{proposition}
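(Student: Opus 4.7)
My plan is to show that the complement $\pi_1(X,x_0)\setminus H$ is open in $\pi_1^{wh}(X,x_0)$. By Lemma 3.1, the sets of the form $[g]\,i_*\pi_1(U,x_0)$, where $U$ ranges over open neighborhoods of $x_0$, form a basis of whisker-open neighborhoods of any point $[g]\in\pi_1(X,x_0)$. Thus it suffices, for an arbitrary $[g]\notin H$, to exhibit an open $U$ containing $x_0$ with $[g]\,i_*\pi_1(U,x_0)\cap H=\emptyset$.

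To produce such a $U$, I would invoke the hypothesis that $X$ is homotopically Hausdorff relative to $H$, applied along the constant path at $x_0$ with the class $[g]\notin H$: this yields an open neighborhood $U$ of $x_0$ such that no loop in $U$ represents an element of the coset $H[g]$, i.e. $i_*\pi_1(U,x_0)\cap H[g]=\emptyset$. Since $i_*\pi_1(U,x_0)$ is a subgroup of $\pi_1(X,x_0)$ and is in particular closed under inversion, taking inverses of both sets gives $i_*\pi_1(U,x_0)\cap [g^{-1}]H=\emptyset$. Multiplying this equality on the left by $[g]$ then yields $[g]\,i_*\pi_1(U,x_0)\cap H=\emptyset$, which is exactly the basic whisker-open neighborhood of $[g]$ that is required to be disjoint from $H$.

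Since $[g]\notin H$ was arbitrary, every point outside $H$ has a whisker-open neighborhood missing $H$, so $H$ is closed in $\pi_1^{wh}(X,x_0)$. The main point that deserves care, rather than being a genuine obstacle, is the coset bookkeeping: the definition of homotopically Hausdorff relative to $H$ as used in Proposition 2.3 most naturally produces a condition involving the right coset $H[g]$, whereas a basic whisker-open neighborhood of $[g]$ has the shape of a left translate $[g]\,i_*\pi_1(U,x_0)$, so one has to invert the right coset identity and then left-translate in order for the two conditions to match. Apart from this manipulation, the proof is simply an unwinding of Lemma 3.1 and the definition of relative homotopic Hausdorffness.
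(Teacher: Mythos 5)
Your proof is correct and follows essentially the same route as the paper's: both apply the homotopically Hausdorff relative to $H$ condition along the constant path at $x_0$ to an element outside $H$ and conclude that a basic whisker neighborhood $[g]\,i_*\pi_1(U,x_0)$ misses $H$. The only difference is cosmetic coset bookkeeping (the paper applies the condition to $[\beta^{-1}]$ and reads off the conclusion directly, while you apply it to $[g]$ and then invert and left-translate), and your manipulation is valid.
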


\begin{proof}
Let $[{\beta }^{-1}]\in{\pi }_1(X,x_0)\backslash H$ and put $\alpha $ a constant path at $x_0\in X$. Since $X$ is homotopically Hausdorff relative to $H$, there exists an open neighborhood $U$ of $x_0$ in $X$ such that for every $[\gamma]\in {\pi }_1(U,x_0)$, $[\gamma]\ \notin [{\beta }^{-1}]H $ and so $ [\beta *\gamma]\notin H $. Hence  $ [\beta]i_*{\pi }_1(U,x_0)\cap H=\emptyset$ which implies that $ H$ is a closed subgroup of ${{\pi }_1}^{wh}(X,x_0)$.
\end{proof}

As a consequence of the above proposition, we can state the following corollary using this fact that if $ p_{H}:\tilde{X}_{H}\rightarrow X$ is a generalized covering map, then $X$ is Homotopically Hausdorff relative to $H$.

\begin{corollary}
If $H\le {\pi_{1}(X,x_{0})}$ is a generalized covering subgroup, then $H$ is a closed subgroup of ${{\pi }_1}^{wh}(X,x_0)$.
\end{corollary}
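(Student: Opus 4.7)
The plan is to derive this as a direct consequence of Proposition 3.9, together with the standard fact (invoked earlier in the paper in the proof of Proposition 2.3) that a generalized covering subgroup forces the ambient space to be homotopically Hausdorff relative to $H$. So the strategy is two lines: transfer the hypothesis to the hypothesis of Proposition 3.9, and apply Proposition 3.9.

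First I would unpack the assumption. Since $H$ is a generalized covering subgroup of $\pi_1(X,x_0)$, by definition there is a generalized covering map $p:(\tilde{X},\tilde{x})\to (X,x_0)$ with $p_*\pi_1(\tilde{X},\tilde{x})=H$. Invoking \cite[Proposition 6.4]{11} together with \cite[Lemma 5.9]{3} (exactly as was used in the proof of Proposition 2.3), existence of such a generalized covering map forces $X$ to be homotopically Hausdorff relative to $H$: for every $[\alpha]\in\pi_1(X,x_0)\setminus H$ and every $x\in X$ reached by some path, one can find an open neighborhood witnessing the separation property required by the definition.

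Having established that $X$ is homotopically Hausdorff relative to $H$, I would then apply Proposition 3.9 verbatim to conclude that $H$ is closed in $\pi_1^{wh}(X,x_0)$. Concretely, for any $[\beta^{-1}]\notin H$, Proposition 3.9 supplies an open neighborhood $U$ of $x_0$ such that the basic whisker-open set $[\beta]\,i_*\pi_1(U,x_0)$ misses $H$; since the sets of this form are a basis for the whisker topology (Lemma 3.1), the complement of $H$ is open.

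There is no real obstacle here: the genuine work is contained in Proposition 3.9 and in the earlier cited results characterizing generalized covering subgroups via homotopical Hausdorffness relative to $H$. The only thing to be careful about is ensuring the chain of implications is cited cleanly, namely the implication ``generalized covering subgroup $\Longrightarrow$ homotopically Hausdorff relative to $H$,'' which is the one substantive import from \cite{11,3}. Beyond that, the proof is a single sentence of application.
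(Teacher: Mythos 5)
Your proof is correct and follows exactly the route the paper itself takes: it deduces that $X$ is homotopically Hausdorff relative to $H$ from the existence of the generalized covering (via \cite[Proposition 6.4]{11} and \cite[Lemma 5.9]{3}, just as in the proof of Proposition 2.3) and then applies Proposition 3.9. No discrepancies to report.
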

\noindent \textbf{Question.}
 If $H\le {\pi }_1(X,x_0){\rm \ }$ is a closed subgroup of ${{\pi }_1}^{wh}(X,x_0)$, then when dose $p_H:{\tilde{X}}_H\rightarrow X$ have the ${UPL}$ property?

As mentioned before, in both the whisker topology and the compact-open topology on the fundamental group every open subgroups are also closed, but the converse may not hold, in general. Hence an open subgroup may illustrate some properties which does not hold for the closed one.
Therefore, a question may be raised naturally that if $H$ is an open subgroup of ${{\pi }_1}^{wh}(X,x_0)$, then whether $p_H:{\tilde{X}}_H\rightarrow X$ has the ${UPL}$ property or not? By the following example we show that there is an open (also closed) normal subgroup of the fundamental group equipped with the whisker topology that does not admit a generalized covering map.
\begin{example}
Consider the {\textit{Harmonic Archipelago }}, $HA$ (see \cite{23}). Let $a\in HA$ be the canonical based point and put $b\ne a$ be another point. Using \cite [Proposition 4.21]{8}, ${{\pi }_1}^{wh}(HA,b)$  is discrete. Thus the trivial subgroup of ${{\pi }_1}^{wh}(HA,b)$ is an open and closed subgroup. It is clearly a normal subgroup.

On the other hand, ${{\pi }_1}^{sg}(HA,b)={{\pi }_1}(HA,b)$ (see \cite[Remark 2.11]{19}). Then by Proposition 2.3, ${{\pi }_1}^{gc}(HA,b)={{\pi }_1}(HA,b)$. Hence there is only the trivial generalized covering which is also a covering. Therefore ${{\pi }_1}^{gc}(HA,b)\ne 1$ and so $(HA,b)$ has no generalized universal covering. Thus the trivial subgroup does not admit the generalized covering.
\end{example}

For every subgroup $H\le {{\pi }_1}^{wh}(X,x_0)$, there is a homeomorphism $h:{p_H}^{-1}(x_0)\rightarrow {{{\pi }_1}^{wh}(X,x_0)}/{H}$ when ${{{\pi }_1}^{wh}(X,x_0)}/{H}$ is as quotient space of ${{\pi }_1}^{wh}(X,x_0)$ with the quotient topology.
 Let $h:\ {\langle \alpha \rangle }_H\longmapsto [\alpha ]H$, then the following diagram commutes when $s$ is defined by ${{\langle \alpha \rangle }_e\longmapsto \langle \alpha \rangle }_H$. Clearly $s$ is a quotient map and $f$ is a homeomorphism. Since $q$ is a quotient map, $h$ is a homeomorphism.

\begin{displaymath}
    \xymatrix{
        p^{-1}(x_{0}) \ar[r]^f \ar[d]_s & \pi_{1}^{wh}(X,x_{0}) \ar[d]^{q} \\
        p_{H}^{-1}(x_{0}) \ar[r]_{h}       & \pi_{1}^{wh}(X,x_{0})/H.}
\end{displaymath}

Clearly, for every normal subgroup $H\unlhd {{\pi }_1}^{wh}(X,x_0)$, the fiber ${p_H}^{-1}(x_0)$ will be a group by the binary operation $\psi :{(p^{-1}_H(x_0))}^{wh}\times \ {(p^{-1}_H(x_0))}^{wh}\to \ {((p^{-1}_H(x_0))}^{wh}$ defined by $\psi({\langle \alpha \rangle}_H\ ,{\langle \beta \rangle }_H))={\langle \alpha *\beta \rangle }_H$. In general, there is a useful action for any subgroup $H\le{\pi }_1(X,x_0)$ that may help to explain the structure of $p^{-1}_H(x_0)$. The map $*_H:\ p^{-1}_H(x_0)\times{\pi }_1(X,x_0)\to p^{-1}_H(x_0)$ defined by $*_H({\langle \alpha \rangle }_H\ ,\ [\gamma ])={\langle \alpha *\gamma \rangle }_H$ is a group action with the following properties:
\begin{enumerate}
\item   If  $\tilde{x}={\langle \alpha \rangle }_H\in  p^{-1}_H (x_0)$, then  the stabilizer of  $\tilde{x}$ is $[{\alpha }^{-1}H\alpha]$.

\item   O(${\langle e_0\rangle }_H)={\langle e_0\rangle }_H*\ {\pi }_1(X,x_0)=p^{-1}_H(x_0)\ $, where O(${\langle \alpha \rangle }_H)$ is the orbit of ${\langle \alpha \rangle }_H$.
\end{enumerate}
Part 1 holds since if ${\pi }_1{(X,x_0)}_{\tilde{x}}=:\{[g]\ \in {\pi }_1(X,x_0)\mathrel{|\vphantom{[g]\ \in {\pi }_1(X,x_0) \ \tilde{x}*_H[g]={\rm \ }\tilde{x}}\kern-\nulldelimiterspace}\ \tilde{x}*_H[g]={\rm \ }\tilde{x}\}$ is the stabilizer of $\tilde{x}\in p^{-1}_H(x_0)$, then
$\pi_{1}(X,x_{0})_{\tilde{x}}=\lbrace [g]\in{\pi_{1}(X,x_{0})} \ \vert \ \langle \alpha \ast g \rangle_{H}=\langle \alpha\rangle_{H}\rbrace=\lbrace [g]\in{\pi_{1}(X,x_{0})} \ \vert \ [\alpha\ast g\ast \alpha^{-1}]\in{H}\rbrace$.
Therefore, ${\pi }_1{(X,x_0)}_{\tilde{x}}=[{\alpha }^{-1}H\alpha ]$. The second part is a trivial consequence of part 1. This action easily implies another simple proof for $|p^{-1}_H(x_0)|=[{\pi }_1(X,x_0):H]$, because $|p^{-1}_H(x_0)|=|{\rm o(}e_H{\rm )}|=[{\pi }_1(X,x_0):{\pi }_1{(X,x_0)}_{e_H}]=[{\pi }_1(X,x_0):H]$.

In the following corollary  parts 1, 2 and 3 are immediate consequences of Proposition 3.4. Part 4 is a consequences of the above argument.
\begin{corollary}
Let $p_H:{\tilde{X}}_H\ \to X$ be a generalized covering map, then
\begin{enumerate}
\item   For every pair $ \tilde{x}_{1},\tilde{x}_{0}\in{p^{-1}_H(x_0)}$, ${(p_H)}_*{\pi }_1(\tilde{X}_H, \tilde{x}_0) $ and ${(p_H)}_*{\pi }_1({\tilde{X}}_H,{\tilde{x}}_{1})$ are conjugate subgroups of  ${\pi }_1(X,x_0)$.
\item   $|p^{-1}_H(x_0)|=[{\pi }_1(X,x_0):{(p_H)}_*{\pi }_1({{X}}_H,{\langle e_0\rangle }_H)]$.
\item   If $K\le \ {\pi }_1(X,x_0)$ is conjugate to ${(p_H)}_*{\pi }_1({\tilde{X}}_H,\tilde{x})$ for $\tilde{x}=\ {\langle \alpha \rangle }_H\in{p^{-1}_H(x_0)}$, then $K={(p_H)}_*{\pi }_1({\tilde{X}}_H,\tilde{y})$ for some $\tilde{y}={\langle \alpha *\gamma \rangle }_H$, where ${(p_H)}_*{\pi }_1({\tilde{X}}_H,\tilde{x})=[\gamma ]K[{\gamma }^{-1}]$.
\item  $|O({\langle \alpha \rangle }_H)|=[{\pi }_1(X,x_0):[{\alpha }^{-1}H\alpha]]$.
\end{enumerate}
\end{corollary}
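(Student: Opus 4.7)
The plan is to reduce each part of the corollary to a standard covering-space calculation, using the unique path lifting property of $p_H$ (available because $p_H$ is a generalized covering), together with Lemma 2.10, Corollary 3.5, and the orbit--stabilizer analysis of the action $*_H$ introduced in the paragraph just before the statement.

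For part (1), since $\tilde X_H$ is path connected, I would choose a path $\tilde\delta$ from $\tilde x_0$ to $\tilde x_1$ in $\tilde X_H$ and set $\delta=p_H\circ\tilde\delta$, a loop at $x_0$. Conjugation by $[\tilde\delta]$ induces the usual isomorphism $\pi_1(\tilde X_H,\tilde x_1)\to\pi_1(\tilde X_H,\tilde x_0)$, and pushing forward by $(p_H)_*$ yields $(p_H)_*\pi_1(\tilde X_H,\tilde x_0)=[\delta](p_H)_*\pi_1(\tilde X_H,\tilde x_1)[\delta^{-1}]$. Part (3) then drops out quickly: the standard lift of $\gamma$ starting at $\tilde x=\langle\alpha\rangle_H$ is $t\mapsto\langle\alpha*\gamma_t\rangle_H$, whose endpoint is $\tilde y=\langle\alpha*\gamma\rangle_H$; applying part (1) to the pair $(\tilde x,\tilde y)$ along this lift produces $(p_H)_*\pi_1(\tilde X_H,\tilde x)=[\gamma](p_H)_*\pi_1(\tilde X_H,\tilde y)[\gamma^{-1}]$, and comparing with the hypothesis $(p_H)_*\pi_1(\tilde X_H,\tilde x)=[\gamma]K[\gamma^{-1}]$ forces $(p_H)_*\pi_1(\tilde X_H,\tilde y)=K$.

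For part (2), I would combine Lemma 2.10, which guarantees $(p_H)_*\pi_1(\tilde X_H,e_H)=H$ since $p_H$ is a generalized covering, with Corollary 3.5, which gives $|p_H^{-1}(x_0)|=[\pi_1(X,x_0):H]$; this immediately yields the desired identity. For part (4), I would apply the orbit--stabilizer theorem to the action $*_H$: property (1) listed just above the corollary already identifies the stabilizer of $\langle\alpha\rangle_H$ as the subgroup $[\alpha^{-1}H\alpha]$, hence $|O(\langle\alpha\rangle_H)|=[\pi_1(X,x_0):[\alpha^{-1}H\alpha]]$.

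I do not anticipate a serious obstacle. All the genuinely substantive work---well-definedness of the standard path lifts, transitivity of $*_H$, identification of the stabilizer, and counting of fiber points---has already been carried out earlier in the paper, so the proof of the corollary amounts to an organized bookkeeping argument resting on the unique path-lifting property of the generalized covering $p_H$.
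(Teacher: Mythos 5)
Your proof is correct and matches the paper's own (very terse) treatment: the paper dispatches parts (1)--(3) as ``immediate consequences of Proposition 3.4'' --- whose underlying mechanism is exactly the map $\langle\alpha\rangle_H\mapsto\langle\alpha*\gamma\rangle_H$, i.e.\ the endpoint of the standard lift that drives your change-of-basepoint arguments --- and part (4) via the same orbit--stabilizer computation you cite. The only cosmetic quibble is that for part (2) the identity $(p_H)_*\pi_1(\tilde{X}_H,e_H)=H$ comes most directly from the UPL characterization of Brodskiy et al.\ quoted just before Lemma 2.10 (together with the trivial inclusion $H\le (p_H)_*\pi_1(\tilde{X}_H,e_H)$) rather than from Lemma 2.10 itself, whose equivalence is stated under the hypothesis $\pi_1^{gc}(X,x_0)\le H$.
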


 Similar to the trivial case $H={1}$, there is a bijection $\varphi :{\langle \alpha \rangle}_H*_Hi_*{\pi }_1(U,x_0)\to (U,{\langle \alpha \rangle }_H)\cap p^{-1}_H(x_0)$ defined by $({\langle \alpha \rangle }_H*_H[\gamma])={\langle \alpha *\gamma \rangle }_H$,
for every $H\leq\pi_{1}(X,x_{0})$, any ${\langle \alpha \rangle }_H\in p^{-1}_H(x_0)$ and every open neighborhood $U$ of $x_0$, where $p_H:{\tilde{X}}_H\to X$ is the endpoint projection map. The above action extends Proposition 3.2  for every normal subgroup $H$.
\begin{proposition}
 Let $H$ be a normal subgroup of ${\pi }_1(X,x_0)$, then $p^{-1}_H(x_0)$ is a homogenous space.
\end{proposition}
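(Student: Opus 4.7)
The plan is to mimic, at the level of the fiber, the argument used for Proposition 3.2. Since $H$ is normal, the operation $\langle\alpha\rangle_H *_H \langle\beta\rangle_H := \langle\alpha*\beta\rangle_H$ is well-defined and turns $p_H^{-1}(x_0)$ into a group (this was noted in the paragraph preceding Corollary 3.12, where the normality of $H$ is used to ensure that the product does not depend on representatives). So $p_H^{-1}(x_0)$ acts on itself by left translation, and showing that this action is transitive reduces, by the standard fact cited from \cite[Corollary 1.3.6]{1}, to showing that every left translation $L_{\langle\alpha\rangle_H}:\langle\beta\rangle_H\longmapsto\langle\alpha*\beta\rangle_H$ is a homeomorphism of $p_H^{-1}(x_0)$.

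First I would observe that $L_{\langle\alpha\rangle_H}$ is a bijection, with two-sided inverse $L_{\langle\alpha^{-1}\rangle_H}$. This is immediate from the group structure. It then suffices to prove continuity of $L_{\langle\alpha\rangle_H}$ for every $\langle\alpha\rangle_H\in p_H^{-1}(x_0)$, since the inverse is a left translation of the same form.

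For continuity, I would use the basis of the subspace topology on $p_H^{-1}(x_0)$ described just before the statement: the sets $(U,\langle\gamma\rangle_H)\cap p_H^{-1}(x_0)$ as $U$ ranges over open neighborhoods of $x_0$, and the bijection
\[
\varphi:\langle\gamma\rangle_H *_H i_*\pi_1(U,x_0)\longrightarrow (U,\langle\gamma\rangle_H)\cap p_H^{-1}(x_0),\qquad \langle\gamma\rangle_H*_H[\eta]\longmapsto\langle\gamma*\eta\rangle_H.
\]
Fix $\langle\beta\rangle_H\in p_H^{-1}(x_0)$ and a basic open neighborhood of $L_{\langle\alpha\rangle_H}(\langle\beta\rangle_H)=\langle\alpha*\beta\rangle_H$, namely $\langle\alpha*\beta\rangle_H *_H i_*\pi_1(U,x_0)$. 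Then $\langle\beta\rangle_H *_H i_*\pi_1(U,x_0)$ is a basic open neighborhood of $\langle\beta\rangle_H$, and for any $[\eta]\in i_*\pi_1(U,x_0)$,
\[
L_{\langle\alpha\rangle_H}\bigl(\langle\beta*\eta\rangle_H\bigr)=\langle\alpha*\beta*\eta\rangle_H\in\langle\alpha*\beta\rangle_H *_H i_*\pi_1(U,x_0).
\]
Hence $L_{\langle\alpha\rangle_H}$ sends the given basic neighborhood of $\langle\beta\rangle_H$ into the chosen basic neighborhood of $\langle\alpha*\beta\rangle_H$, proving continuity at an arbitrary point.

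The only subtle step is the well-definedness of both the group operation and of $L_{\langle\alpha\rangle_H}$ on classes modulo $H$; this is exactly where normality enters, because replacing $\alpha$ or $\beta$ by $\sim_H$-equivalent loops multiplies the product on the left or right by an element of $H$, and normality lets us absorb any such element back into $H$ from the other side. Once this is in hand, the argument above is parallel to the proof of Proposition 3.2, and transitivity of the action on itself yields that $p_H^{-1}(x_0)$ is a homogeneous space. I do not expect any serious obstacle beyond correctly tracking the bookkeeping of cosets in the basis, which the bijection $\varphi$ handles cleanly.
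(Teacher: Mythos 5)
Your proof is correct and follows essentially the same route as the paper's: show each left translation $\langle\beta\rangle_H\mapsto\langle\alpha*\beta\rangle_H$ is a continuous bijection by checking that it carries the basic neighborhood $\langle\beta\rangle_H *_H i_*\pi_1(U,x_0)$ into $\langle\alpha*\beta\rangle_H *_H i_*\pi_1(U,x_0)$, and conclude homogeneity from transitivity of the self-action. You are in fact slightly more careful than the paper, which asserts bijectivity without comment, whereas you pinpoint exactly where normality of $H$ is needed for well-definedness.
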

\begin{proof}
 It is enough to show that the left translations are homeomorphism. Let $ {\omega }_{\alpha }:p^{-1}_H(x_0)\to p^{-1}_H(x_0) $ with $ {\langle \beta \rangle }_H\longmapsto {\langle \alpha *\beta \rangle }_H $ for any $ {\langle \alpha \rangle }_H\in p^{-1}_H(x_0)$ be a left translation. Clearly, $\ {\omega }_{\alpha }$ is a bijection. We show that ${\omega }_{\alpha }$ is continuous. Let $U$ be an open neighborhood of $x_0$ in $X$ and ${\langle \alpha *\beta \rangle }_H*_Hi_*{\pi }_1(U,x_0)$ be an open basis neighborhood of ${\langle \alpha *\beta \rangle }_H$ in $p^{-1}_H(x_0)$. It is clear that ${\langle \beta \rangle }_H*_Hi_*{\pi }_1(U,x_0)$  is an open basis neighborhood of ${\langle \beta \rangle }_H$ and ${\omega }_{\alpha }({\langle \beta \rangle }_H*_Hi_*{\pi }_1(U,x_0))\subseteq {\langle \alpha *\beta \rangle }_H*_Hi_*{\pi }_1(U,x_0)$. Hence the result holds.
\end{proof}

\section{Semilocally (Path) Connectedness with Respect to a Subgroup}

It is well-known that semilocally simply connectedness plays an important roles in the covering space theory. For example, a locally path connected space has universal covering if and only if it is semilocally simply connected. Recall that a space $X$ is called semilocally simply connected at ${x}\in{X}$ if there exists an open neighborhood $U\ of\ x$ such that all loops in $U$ at $x$ are nullhomotopic in $X$. Moreover, a space is called semilocally simply connected if it is semilocally simply connected at each of its points. In this section, we extend these concepts to any subgroup of the fundamental group and introduce their topological equivalences.

\begin{definition}
Let $X$ be a topological space and $H$ be a subgroup of $\pi_{1}(X,x_{0})$. Then we define the following notions.\\
\textbf{(i)} $X$ is called {\textit{semilocally H-connected at }} $x_{0}\in{X}$ if there exists an open neighborhood $U\ of\ x_{0}$ with $i_*{\pi }_1(U,x_{0})\le H$.\\
\textbf{(ii)} $X$ is called {\textit{semilocally path H-connected}} if for every path $\alpha$ beginning at $x_{0}$ there exists an open neighborhood $U_{\alpha }$ of  $\alpha (1)$ with $i_*{\pi }_1\left(U_{\alpha },\alpha (1)\right)\le [{\alpha }^{-1}H\alpha ]$, where $[\alpha^{-1} H \alpha]=\lbrace [\alpha^{-1}\gamma \alpha] \ \vert \ [\gamma]\in{H}\rbrace$.\\
\textbf{(iii)} $X$ is called {\textit{semilocally }}${\mathbf H}${\textit{-connected}} if for every $x\in{X}$ and for every path $\alpha $ from $x_{0}$ to $x$, the space $X$ is semilocally $[{\alpha }^{-1}H\alpha ]$-connected at $x\in{X}$.
\end{definition}

Note that in the case of trivial subgroup $H=e$ both of semilocally path $H$-connected and semilocally $H$-connected properties coincide with the semi locally simply connected property.
\begin{theorem}
Let $H\le {\pi }_1(X,x_0)$, then $X$ is semilocally H-connected at ${{\rm x}}_0\in {\rm X}$ if and only if $H$ is an open subgroup of ${{\pi }_1}^{wh}(X,x_0)$.
\end{theorem}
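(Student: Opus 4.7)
The plan is to use Lemma 3.1, which identifies the basic open neighborhoods of the whisker topology on $\pi_{1}(X,x_{0})$ as the cosets $[\alpha]i_{*}\pi_{1}(U,x_{0})$, with $U$ an open neighborhood of $x_{0}$.

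For the forward direction, I would assume $X$ is semilocally $H$-connected at $x_{0}$, so there exists an open neighborhood $U$ of $x_{0}$ with $i_{*}\pi_{1}(U,x_{0})\le H$. To show that $H$ is open in $\pi_{1}^{wh}(X,x_{0})$, I would verify that $H$ is a neighborhood of each of its points. For every $[h]\in H$, the basic open set $[h]i_{*}\pi_{1}(U,x_{0})$ contains $[h]$, and since $[h]\in H$ together with $i_{*}\pi_{1}(U,x_{0})\le H$ gives $[h]i_{*}\pi_{1}(U,x_{0})\subseteq H$. Thus $H=\bigcup_{[h]\in H}[h]i_{*}\pi_{1}(U,x_{0})$ is a union of basic opens, hence open.

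For the reverse direction, I would suppose $H$ is open in $\pi_{1}^{wh}(X,x_{0})$. Since the identity $[c_{x_{0}}]$ belongs to $H$, Lemma 3.1 provides a basic open neighborhood $[\alpha]i_{*}\pi_{1}(U,x_{0})\subseteq H$ containing $[c_{x_{0}}]$. From $[c_{x_{0}}]\in[\alpha]i_{*}\pi_{1}(U,x_{0})$ one has $[\alpha]^{-1}\in i_{*}\pi_{1}(U,x_{0})$, and since $i_{*}\pi_{1}(U,x_{0})$ is a subgroup, $[\alpha]\in i_{*}\pi_{1}(U,x_{0})$. Therefore the coset $[\alpha]i_{*}\pi_{1}(U,x_{0})$ equals $i_{*}\pi_{1}(U,x_{0})$, so $i_{*}\pi_{1}(U,x_{0})\subseteq H$. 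This is exactly semilocally $H$-connectedness at $x_{0}$.

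There is no real obstacle here: both directions are essentially a bookkeeping translation between the defining condition $i_{*}\pi_{1}(U,x_{0})\le H$ and the basic open sets of the whisker topology supplied by Lemma 3.1. The only subtle point to be careful about is noting that a basic open containing $[c_{x_{0}}]$ can (after replacing the representative $[\alpha]$ by an element of $i_{*}\pi_{1}(U,x_{0})$) be written in the canonical form $i_{*}\pi_{1}(U,x_{0})$, which is what lets one extract the desired neighborhood $U$ of $x_{0}$ in the reverse direction.
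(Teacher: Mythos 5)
Your proof is correct and takes essentially the same approach as the paper: both directions reduce to the basis description of the whisker topology from Lemma~3.1, with the forward direction writing $H$ as a union of cosets $[h]i_{*}\pi_{1}(U,x_{0})$ and the reverse direction extracting $i_{*}\pi_{1}(U,x_{0})\le H$ from a basic neighborhood of an element of $H$ contained in $H$. The only cosmetic difference is that the paper works with an arbitrary $[\alpha]\in H$ and translates by $[\alpha]^{-1}$, whereas you specialize to the identity and normalize the coset representative; these are interchangeable.
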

\begin{proof}
Let $X$ be semilocally H-connected at $x_{0}\in{ X}$, i.e, there exists an open neighborhood $U$ $of\ x_0$ such that  $i_*{\pi }_1(U,x_0)\le H$. Thus for every $[\alpha ]\in H$, $[\alpha]i_*{\pi }_1(U,x_0)\le H$. By Lemma 3.1, $H$ is an open subgroup of ${{\pi }_1}^{wh}(X,x_0)$. Conversely, let $[\alpha ]\in H$. Since $H$ is an open subgroup of ${{\pi }_1}^{wh}(X,{x}_0)$ then there exists an open neighborhood $U$ $of\ x_0$ such that  ${[\alpha ]i}_*{\pi }_1(U,x_0)\le H$. Therefore $i_*{\pi }_1(U,x_0)\le [{\alpha }^{-1}]H=H$. Hence $X$ is semilocally H-connected at ${{\rm x}}_0\in {\rm X}$.
\end{proof}

Naturally, semilocally H-connectedness at a point depends on the choice of the point. For instance, the Hawaiian earring $(HE,a)$ is semilocally simply connected at $x\in HE$, where $x$ is any non-based point, $x\ne a$, and clearly it is not semilocally simply connected at $a\in HE$.

We recall from \cite{16} that the Spanier group $\pi(\mathcal{U}, x_{0})$ with respect to an open cover $\mathcal{U}=\lbrace U_{i} \ \vert \ i\in{I}\rbrace$ is defined to be the subgroup of $\pi_{1}(X,x_{0})$ which contains all homotopy classes having representatives of the following type:
$$\prod_{j=1}^{n}\alpha_{j}\beta_{j}\alpha^{-1}_{j},$$
where $\alpha_{j}$'s are arbitrary path starting at $x_{0}$ and each $\beta_{j}$ is a loop inside of the open set $U_{j}\in{\mathcal{U}}$.

Also, we recall from  \cite[Definition 3.1]{16} that the path Spanier group $\tilde{\pi}(\mathcal{V}, x_{0})$ with respect to a path open cover $\mathcal{V}=\lbrace V_{\alpha} \ \vert \ \alpha\in{P(X,x_{0})}\rbrace$ of $X$ such that $\alpha(1)\in{V_{\alpha}}$ for every $\alpha\in{P(X,x_{0})}$, is defined to be the subgroup of $\pi_{1}(X,x_{0})$  which contains all homotopy classes  having representatives of the following type:
$$\prod_{j=1}^{n}\alpha_{j}\beta_{j}\alpha^{-1}_{j},$$
where $\alpha_{j}$'s are arbitrary path starting at $x_{0}$ and each $\beta_{j}$ is a loop inside of the open set $V_{\alpha_{j}}$ for all $j\in{\lbrace1,2,...,n\rbrace}$.

Let $X$ be semilocally path $H$-connected, then by the definition, for every path $\alpha\in p(X,x_{0})$, there exists an open neighborhood $V_{\alpha}$ which for every loop $\gamma :(I,\dot{I})\rightarrow (V_{\alpha},\alpha(1))$, $[\gamma]\in{[\alpha^{-1}H\alpha]} $ if and only if $ [\alpha^{-1}\gamma\alpha]\in{H}$. Put $\mathcal{V}=\lbrace V_{\alpha} \ \vert \ \alpha\in{P(X,x_{0})}\rbrace$ and construct $\tilde{\pi}(\mathcal{V},x_{0})$ such as the above. It implies easily that  $\tilde{\pi}(\mathcal{V},x_{0})\leq H$. On the other hand, Torabi et al. \cite[Theorem 4.1]{16}  showed that $H$ is an open subgroup of ${{\pi }_1}^{qtop}(X,x_0)$ if and only if there exists a path open cover $\mathcal{V}$ of $x_0$ such that $\widetilde{\pi }(\mathcal{V},x_0)\le H$. The result can be seen in the following proposition.
\begin{proposition}
A connected, locally path connected space $X$ is semilocally path H-connected for $H\le {\pi }_1(X,x_0)$ if and only if $H$ is an open subgroup of ${{\pi }_1}^{qtop}(X,x_0)$.
\end{proposition}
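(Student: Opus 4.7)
The plan is to reduce the statement to the characterization of open subgroups of $\pi_1^{qtop}(X,x_0)$ via path Spanier subgroups that was recalled just before the proposition, namely \cite[Theorem 4.1]{16}: $H$ is open in $\pi_1^{qtop}(X,x_0)$ if and only if there exists a path open cover $\mathcal{V}$ of $x_0$ with $\widetilde{\pi}(\mathcal{V},x_0)\le H$. Thus both directions become translations between a pointwise condition (semilocal path $H$-connectedness at endpoints of paths) and the ``global'' condition that such a path open cover can be chosen.

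For the forward direction, I would assume $X$ is semilocally path $H$-connected. By definition, for each $\alpha\in P(X,x_0)$ I can pick an open neighborhood $V_\alpha$ of $\alpha(1)$ with $i_*\pi_1(V_\alpha,\alpha(1))\le[\alpha^{-1}H\alpha]$. The family $\mathcal{V}=\{V_\alpha\mid\alpha\in P(X,x_0)\}$ is then a path open cover in the sense of \cite{16}. A typical generator of $\widetilde{\pi}(\mathcal{V},x_0)$ has the form $[\alpha_1\beta_1\alpha_1^{-1}\cdots\alpha_n\beta_n\alpha_n^{-1}]$ where $\beta_j$ is a loop in $V_{\alpha_j}$ at $\alpha_j(1)$. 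By the choice of $V_{\alpha_j}$ we have $[\beta_j]\in[\alpha_j^{-1}H\alpha_j]$, hence $[\alpha_j\beta_j\alpha_j^{-1}]\in H$ for every $j$, and therefore the whole product lies in $H$. This shows $\widetilde{\pi}(\mathcal{V},x_0)\le H$, and so $H$ is open in $\pi_1^{qtop}(X,x_0)$ by \cite[Theorem 4.1]{16}.

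For the converse, assume $H$ is open in $\pi_1^{qtop}(X,x_0)$. Again by \cite[Theorem 4.1]{16}, there is a path open cover $\mathcal{V}=\{V_\alpha\}$ of $x_0$ with $\widetilde{\pi}(\mathcal{V},x_0)\le H$. Fixing an arbitrary path $\alpha\in P(X,x_0)$ and a loop $\gamma$ in $V_\alpha$ based at $\alpha(1)$, the element $[\alpha\gamma\alpha^{-1}]$ is by construction a single-factor generator of $\widetilde{\pi}(\mathcal{V},x_0)$, so $[\alpha\gamma\alpha^{-1}]\in H$, equivalently $[\gamma]\in[\alpha^{-1}H\alpha]$. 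Thus $i_*\pi_1(V_\alpha,\alpha(1))\le[\alpha^{-1}H\alpha]$, which is exactly semilocal path $H$-connectedness.

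There is no serious obstacle, since the whole argument is a direct matching of the two definitions through the notion of path Spanier subgroup. The only subtle point that deserves care is making sure that the neighborhoods $V_\alpha$ produced by semilocal path $H$-connectedness really do assemble into a path open cover in the technical sense of \cite[Definition 3.1]{16} (in particular that $\alpha(1)\in V_\alpha$ for every $\alpha$), and that the single-loop elements $[\alpha\gamma\alpha^{-1}]$ indeed belong to $\widetilde{\pi}(\mathcal{V},x_0)$ as the $n=1$ case of the defining products; both are built into the definitions recalled right above the proposition, so the verification is immediate.
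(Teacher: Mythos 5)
Your proof is correct and follows essentially the same route as the paper: both directions are reduced to the characterization of open subgroups of $\pi_1^{qtop}(X,x_0)$ via path Spanier subgroups from \cite[Theorem 4.1]{16}, with the forward direction assembling the neighborhoods $V_\alpha$ into a path open cover exactly as in the paragraph preceding the proposition. The only difference is that you spell out the converse direction explicitly, which the paper leaves implicit.
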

\begin{corollary}
For a connected, locally path connected space $X$, the categorical equality $\mathrm{SCov}(X)=\mathrm{GCov}(X)$ holds if and only if $X$ is semilocally path ${{\pi }_1}^{gc}(X,x_0)$-connected.
\end{corollary}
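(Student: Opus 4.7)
The plan is to reduce the statement to a comparison of the defining subgroup classes and then apply Proposition 4.3, using the fact that $\pi_1^{gc}(X,x_0)$ is itself a generalized covering subgroup (Corollary 2.16). First I would translate the topological condition: by Proposition 4.3 applied to $H=\pi_1^{gc}(X,x_0)$, the space $X$ is semilocally path $\pi_1^{gc}(X,x_0)$-connected if and only if $\pi_1^{gc}(X,x_0)$ is an open subgroup of $\pi_1^{qtop}(X,x_0)$. So it suffices to show that $\mathrm{SCov}(X)=\mathrm{GCov}(X)$ if and only if $\pi_1^{gc}(X,x_0)$ is open in $\pi_1^{qtop}(X,x_0)$.

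For the forward direction, assume $\mathrm{SCov}(X)=\mathrm{GCov}(X)$. By Corollary 2.16, the endpoint projection $p_{\pi_1^{gc}(X,x_0)}:\tilde{X}_{\pi_1^{gc}(X,x_0)}\to X$ is a generalized covering, so by assumption it is a semicovering. Hence $\pi_1^{gc}(X,x_0)$ is a semicovering subgroup, and by \cite[Theorem 3.5]{6} (quoted just before Corollary 2.14) it is open in $\pi_1^{qtop}(X,x_0)$.

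For the converse, assume $\pi_1^{gc}(X,x_0)$ is open in $\pi_1^{qtop}(X,x_0)$. The inclusion $\mathrm{SCov}(X)\leq \mathrm{GCov}(X)$ is Remark 2.15, so only the reverse inclusion is needed. Let $p:\tilde{X}\to X$ be any generalized covering with $H=p_*\pi_1(\tilde{X},\tilde{x})$; then by the very definition of $\pi_1^{gc}(X,x_0)$ as the intersection of all generalized covering subgroups, $\pi_1^{gc}(X,x_0)\leq H$. Since $\pi_1^{qtop}(X,x_0)$ is a quasitopological group, left translations are homeomorphisms, so every coset of the open subgroup $\pi_1^{gc}(X,x_0)$ is open; writing $H$ as a union of such cosets shows $H$ is open. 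Thus $H$ is a semicovering subgroup, and by Lemma 2.8 the generalized covering $p$ is (isomorphic to) a semicovering, giving $\mathrm{GCov}(X)\leq \mathrm{SCov}(X)$.

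The main obstacle I anticipate is the bookkeeping between categories of coverings and their subgroup classes: one must verify that the equality of categories $\mathrm{SCov}(X)=\mathrm{GCov}(X)$ is faithfully encoded by equality of the associated classes of subgroups of $\pi_1(X,x_0)$, which is precisely what Lemma 2.8 (uniqueness up to homeomorphism over $X$ of the generalized covering realizing a given subgroup) and the characterization of semicovering subgroups as open subgroups of $\pi_1^{qtop}(X,x_0)$ provide. Once these are in hand, the argument is a short translation using Proposition 4.3 and Corollary 2.16.
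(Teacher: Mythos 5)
Your proof is correct and follows essentially the same route as the paper: both reduce the statement, via Proposition 4.3 and the characterization of semicovering subgroups as open subgroups of $\pi_1^{qtop}(X,x_0)$, to the openness of $\pi_1^{gc}(X,x_0)$, using that it is itself a generalized covering subgroup containing every other one. The only difference is cosmetic --- the paper phrases the forward direction through path Spanier groups and \cite[Theorem 4.1]{16} and dismisses the converse as ``a similar argument,'' whereas you spell out that converse (cosets of an open subgroup, then Lemma 2.8 to transfer the semicovering structure), which is exactly the intended argument.
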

\begin{proof}
Assume that the categorical equality $\mathrm{SCov}(X)=\mathrm{GCov}(X)$ holds, then every generalized covering subgroup and so ${{\pi }_1}^{gc}(X,x_0)$ is a semicovering subgroup. By \cite[Theorem 4.1]{16} there is a path open cover $\mathcal{V}$ of $X$ such that $\widetilde{\pi }(\mathcal{V},x_0)\le {{\pi }_1}^{gc}(X,x_0)$. Using Proposition 4.3, since $\widetilde{\pi }(\mathcal{V},x_0)$ is open, $X$ is semilocally path ${{\pi }_1}^{gc}(X,x_0)$-connected. By a similar argument, one can prove the converse.
\end{proof}
\begin{corollary}
If $X$ is homotopicaly Hausdorff relative to $H$ and the index of $H\ in\ {\pi }_1(X,x_0)$ is finite, then $X$ is semilocally path \textit{H}-connected.
\end{corollary}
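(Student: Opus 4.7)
The plan is to mimic the finite-intersection argument used in the proof of Proposition 2.13. Fix an arbitrary path $\alpha \in P(X, x_0)$; the goal is to produce an open neighborhood $U$ of $\alpha(1)$ such that $i_*\pi_1(U, \alpha(1)) \le [\alpha^{-1} H \alpha]$.

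First, I would use the finite-index hypothesis to write $\pi_1(X, x_0) = H \cup Hg_1 \cup \dots \cup Hg_n$ as a disjoint union of finitely many cosets, with each $g_j \notin H$. Then, for each $j \in \{1, \dots, n\}$, I would invoke the definition of homotopically Hausdorff relative to $H$ (in the form used inside the proof of Proposition 2.3): applied to the path $\alpha$ and to the element $[g_j] \notin H$, it yields an open neighborhood $U_j$ of $\alpha(1)$ such that no loop $\gamma : (I, \dot{I}) \to (U_j, \alpha(1))$ satisfies $[\alpha * \gamma * \alpha^{-1}] \in Hg_j$.

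Next, set $U = \bigcap_{j=1}^{n} U_j$, which is an open neighborhood of $\alpha(1)$ (and, since $X$ is locally path connected by the standing assumption at the end of Section~1, we may shrink $U$ to a path-connected open neighborhood if desired). For any loop $\gamma$ in $U$ based at $\alpha(1)$, the element $[\alpha * \gamma * \alpha^{-1}] \in \pi_1(X, x_0)$ fails to lie in any coset $Hg_j$, so by the finite coset decomposition it must lie in $H$. Equivalently, $[\gamma] \in [\alpha^{-1} H \alpha]$, which gives $i_*\pi_1(U, \alpha(1)) \le [\alpha^{-1} H \alpha]$. Since $\alpha$ was arbitrary, this is exactly the condition that $X$ is semilocally path $H$-connected.

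There is no real obstacle here; the proof is essentially a direct transcription of the coset-exhaustion trick from Proposition 2.13 to the setting of a single endpoint $\alpha(1)$ rather than the basepoint $x_0$, relying only on the finiteness of $[\pi_1(X,x_0) : H]$ to ensure that finitely many neighborhoods $U_j$ suffice for the intersection to remain open. The only thing worth being careful about is bookkeeping: the definition of ``homotopically Hausdorff relative to $H$'' must be applied with the already-chosen path $\alpha$, not with arbitrary paths, so that the neighborhoods $U_j$ genuinely lie around $\alpha(1)$ and can be intersected.
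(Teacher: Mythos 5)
Your proof is correct. The paper itself disposes of this corollary in two lines: since $[\pi_1(X,x_0):H]<\infty$ and $X$ is homotopically Hausdorff relative to $H$, Proposition 2.13 (applied with $K=\pi_1(X,x_0)$) makes $H$ an open subgroup of $\pi_1^{qtop}(X,x_0)$, and Proposition 4.3 then identifies the open subgroups of $\pi_1^{qtop}(X,x_0)$ as exactly the subgroups for which $X$ is semilocally path $H$-connected. You instead inline the coset-exhaustion argument from the proof of Proposition 2.13 and land directly on Definition 4.1(ii): for a fixed $\alpha$, the intersection $U=\bigcap_{j=1}^{n}U_j$ already witnesses $i_*\pi_1(U,\alpha(1))\le[\alpha^{-1}H\alpha]$, because an element of $\pi_1(X,x_0)$ avoiding every coset $Hg_1,\dots,Hg_n$ must lie in $H$. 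The two routes share the same combinatorial core, but yours bypasses the detour through the quasitopological fundamental group and the path Spanier characterization of its open subgroups, on which both Proposition 2.13 and Proposition 4.3 rely; what the paper's version buys is brevity and reuse of results already established. Your closing remark about applying the relative homotopically Hausdorff property at the already-chosen path $\alpha$ is exactly the right bookkeeping, and matches how the paper itself deploys the definition in the proofs of Propositions 2.3 and 2.13.
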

\begin{proof}
Since the index of $H$  in $\pi_{1}(X,x_{0})$ is finite, by Proposition 2.13, $H$ is an open subgroup of $\pi_{1}^{qtop}(X,x_{0})$. The result follows by Proposition 4.3.
\end{proof}

The following proposition states  same result for semilocally ${\rm H}$-connected spaces.
\begin{proposition}
 A connected, locally path connected space $X$ is semilocally ${\rm H}$-connected for $H\le {\pi }_1(X,x_0)$ if and only if $H$ is a covering subgroup of ${\pi }_1(X,x_0)$.
\end{proposition}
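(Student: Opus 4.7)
The plan is to match Definition 4.1(iii) with the classical Spanier-style characterization: for a connected, locally path connected $X$, a subgroup $H\le\pi_1(X,x_0)$ is a covering subgroup if and only if there exists an open cover $\mathcal{U}$ of $X$ with $\pi(\mathcal{U},x_0)\le H$. Throughout, I will use the reading of 4.1(iii) under which, for every $x\in X$, a single open neighborhood $U_x$ of $x$ realizes $i_*\pi_1(U_x,x)\le[\alpha^{-1}H\alpha]$ simultaneously for all paths $\alpha$ from $x_0$ to $x$; this is the point of distinction from the ``path'' version in 4.1(ii), and it is exactly what a Spanier cover delivers.

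For the sufficiency direction, I will assume $H$ is a covering subgroup and pick an open cover $\mathcal{U}$ with $\pi(\mathcal{U},x_0)\le H$. For each $x\in X$, local path connectedness lets me choose a path-connected open neighborhood $U_x$ of $x$ contained in some $U\in\mathcal{U}$. Then for any path $\alpha$ from $x_0$ to $x$ and any loop $\beta$ in $U_x$ at $x$, the conjugate $[\alpha\ast\beta\ast\alpha^{-1}]$ is a Spanier generator of $\pi(\mathcal{U},x_0)$ and hence lies in $H$, which rephrases as $[\beta]\in[\alpha^{-1}H\alpha]$. This yields $i_*\pi_1(U_x,x)\le[\alpha^{-1}H\alpha]$ uniformly in $\alpha$, so $X$ is semilocally $H$-connected.

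For the converse, I will assemble the open cover $\mathcal{U}=\{U_x\mid x\in X\}$ from the neighborhoods supplied by the hypothesis, shrinking each to a path-connected neighborhood (which preserves the inclusion on $i_*\pi_1$), and show $\pi(\mathcal{U},x_0)\le H$. A typical generator has the form $[\alpha\ast\beta\ast\alpha^{-1}]$, where $\beta$ is a loop at $\alpha(1)$ lying inside some $U_y\in\mathcal{U}$. The main technical obstacle is that $\alpha(1)$ need not coincide with the base point $y$ of $U_y$, so the semilocal hypothesis is not available at $\alpha(1)$ itself. I plan to bridge this by picking a path $\gamma$ inside $U_y$ from $\alpha(1)$ to $y$, setting $\delta=\alpha\ast\gamma$, and applying the hypothesis at the pair $(y,\delta)$ to the relocated loop $\gamma^{-1}\ast\beta\ast\gamma\in\pi_1(U_y,y)$; this produces $[\gamma^{-1}\ast\beta\ast\gamma]=[\delta^{-1}\ast h\ast\delta]$ for some $[h]\in H$, and a cancellation of the $\gamma$'s rewrites $[\alpha\ast\beta\ast\alpha^{-1}]=[h]\in H$. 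This completes the inclusion $\pi(\mathcal{U},x_0)\le H$ and hence identifies $H$ as a covering subgroup.
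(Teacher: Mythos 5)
Your proof is correct and follows essentially the same route as the paper's: both directions go through the classical Spanier characterization (``$H$ is a covering subgroup iff $\pi(\mathcal{U},x_0)\le H$ for some open cover $\mathcal{U}$''), assembling the cover from the neighborhoods supplied by semilocal $H$-connectedness and vice versa. The one place you go beyond the paper is the converse, where the paper simply asserts ``it is clear that $\pi(\mathcal{U},x_0)\le H$'': your bridging path $\gamma$ from $\alpha(1)$ to the center $y$ of $U_y$, together with the reading of Definition 4.1(iii) in which the neighborhood depends only on the point and not on the path, is exactly what is needed to make that assertion rigorous, so this is a welcome filling-in of a detail rather than a different argument.
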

\begin{proof}
Remember that  $H$ is a covering subgroup of ${\pi }_1(X,x_0)$ if and only if there is an open cover $\mathcal{U}$ of $X$ such that the Spanier subgroup $\pi (\mathcal{U},x_0)$ of ${\pi }_1(X,x_0)$ is a subgroup of $H$ (see \cite{21}). Let $H\leq \pi_{1}(X,x_{0})$ and assume that $X$ is semilocally $H$-connected space, then for every $x\in{X}$ and for any path $\alpha:x_{0}\mapsto x$, there is an open neighborhood $U$ at $x$ such that $i_{\ast}\pi_{1}(U,x)\leq [\alpha^{-1}H \alpha]$, equivalently, $[\alpha]i_{\ast}\pi_{1}(U,x)[\alpha^{-1}]\leq H$. Put  $\mathcal{U}=\lbrace U \ \vert \ x\in{X}, \ \alpha:x_{0}\mapsto x \rbrace$. It is clear that $\mathcal{U}$ is an open cover of $X$ and $\pi(\mathcal{U}, x_{0})\leq H$. Similarly, one can prove the converse.
\end{proof}
\begin{corollary}
For a connected, locally path connected space $X$, the categorical equality $\mathrm{GCov}(X)=\mathrm{Cov}(X)$ holds if and only if $X$ is semilocally ${{\pi }_1}^{gc}(X,x_0)$-connected.
\end{corollary}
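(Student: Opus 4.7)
The plan is to mirror the argument of Corollary 4.4, but with Proposition 4.6 playing the role of Proposition 4.3. The two key inputs are the characterization ``$X$ is semilocally $H$-connected iff $H$ is a covering subgroup'' (Proposition 4.6) and the fact that $\pi_1^{gc}(X,x_0)$ is itself a generalized covering subgroup, namely the one associated to the universal object $\tilde{X}_{\pi_1^{gc}(X,x_0)}$ in $\mathrm{GCov}(X)$ (Corollary 2.16). With these in hand, the proof reduces to short set-theoretic manipulations of subgroups.

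For the forward direction, I will assume $\mathrm{GCov}(X)=\mathrm{Cov}(X)$. Since $\pi_1^{gc}(X,x_0)$ is a generalized covering subgroup by Corollary 2.16, the assumption forces it to be a covering subgroup. Proposition 4.6 then immediately yields that $X$ is semilocally $\pi_1^{gc}(X,x_0)$-connected.

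For the reverse direction, I will assume $X$ is semilocally $\pi_1^{gc}(X,x_0)$-connected. Proposition 4.6 supplies an open cover $\mathcal{U}$ of $X$ with $\pi(\mathcal{U},x_0)\le \pi_1^{gc}(X,x_0)$. Given any generalized covering subgroup $H\le \pi_1(X,x_0)$, the definition of $\pi_1^{gc}(X,x_0)$ as the intersection of all such subgroups gives $\pi_1^{gc}(X,x_0)\le H$, hence $\pi(\mathcal{U},x_0)\le H$. By the classical Spanier criterion recalled in the proof of Proposition 4.6, $H$ is a covering subgroup, proving $\mathrm{GCov}(X)\le\mathrm{Cov}(X)$. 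The reverse inclusion $\mathrm{Cov}(X)\le\mathrm{GCov}(X)$ is part of Remark 2.15, so the two categories coincide.

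I do not anticipate a genuine obstacle: the only subtlety is passing from ``every generalized covering subgroup is a covering subgroup'' to the categorical equality $\mathrm{GCov}(X)=\mathrm{Cov}(X)$, which is legitimate because in a connected, locally path connected space each object of $\mathrm{GCov}(X)$ (respectively $\mathrm{Cov}(X)$) is determined up to isomorphism over $X$ by its image subgroup in $\pi_1(X,x_0)$, and morphisms in both categories are characterized by inclusion of the corresponding subgroups (via the unique lifting property in Definition 2.1). Thus the categorical statement reduces cleanly to the subgroup-level statement handled above.
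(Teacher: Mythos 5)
Your proposal is correct and follows essentially the same route as the paper: both directions hinge on Proposition 4.6 together with the fact that $\pi_1^{gc}(X,x_0)$ is itself a generalized covering subgroup, and the reverse direction passes from $\pi_1^{gc}(X,x_0)\le H$ to $H$ being a covering subgroup (the paper cites a result that subgroups containing a covering subgroup are covering subgroups, which is exactly the Spanier-criterion step you spell out). Your closing remark on why the subgroup-level statement yields the categorical equality is a point the paper leaves implicit, but it does not change the argument.
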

\begin{proof}
Assume that $X$ is semilocally ${{\pi }_1}^{gc}(X,x_0)$-connected. Using Proposition 4.6, ${{\pi }_1}^{gc}(X,x_0)$ is a covering subgroup of ${\pi }_1(X,x_0)$. Let $H$ be a generalized covering subgroup of ${\pi }_1(X,x_0)$, then clearly ${{\pi }_1}^{gc}(X,x_0)\le H$. By \cite[Proposition 3.3]{15}, $H$ is a covering subgroup. Therefore $\mathrm{GCov}(X)\le \mathrm{Cov}(X)$. The equality holds from  Remark 2.15. Conversely, if $\mathrm{GCov}(X)=\mathrm{Cov}(X)$, then clearly ${{\pi }_1}^{gc}(X,x_0)$ is a covering subgroup of ${\pi }_1(X,x_0)$ and so by Proposition 4.6 $X$ is semilocally ${{\pi }_1}^{gc}(X,x_0)$-connected.
\end{proof}
\begin{corollary}
 If $X$ is a connected, locally path connected and semi locally ${{\pi }_1}^{gc}(X,x_0)$-connected space, then ${{\pi }_1}^{gc}(X,x_0)={{\pi }_1}^{sp}(X,x_0)$ and $X$ is coverable in the sense of \cite{15}.
\end{corollary}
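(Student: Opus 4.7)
The plan is to assemble the conclusion from two short steps: first establishing the equality of subgroups by squeezing via the chain $(*)$, and then deducing coverability from the categorical equality of Corollary~4.7 together with the universal-object statement of Corollary~2.16.

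For the equality ${\pi}_1^{gc}(X,x_0)={\pi}_1^{sp}(X,x_0)$, I would argue as follows. One inclusion is free: by Theorem~2.5 the chain of subgroups gives ${\pi}_1^{gc}(X,x_0)\le {\pi}_1^{sp}(X,x_0)$. For the reverse inclusion, note that semilocal ${\pi}_1^{gc}(X,x_0)$-connectedness is exactly the hypothesis of Proposition~4.6, which shows that ${\pi}_1^{gc}(X,x_0)$ is a covering subgroup, and therefore (invoking Spanier's classical characterization as recalled in the proof of Proposition~4.6) there exists an open cover $\mathcal{U}$ of $X$ with $\pi(\mathcal{U},x_0)\le {\pi}_1^{gc}(X,x_0)$. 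Since ${\pi}_1^{sp}(X,x_0)$ is by definition the intersection of the Spanier subgroups $\pi(\mathcal{V},x_0)$ over all open covers $\mathcal{V}$ of $X$, we get ${\pi}_1^{sp}(X,x_0)\le \pi(\mathcal{U},x_0)\le {\pi}_1^{gc}(X,x_0)$, which combined with the previous inclusion yields the desired equality.

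For coverability, I would use Corollary~4.7: under the present hypothesis, $\mathrm{GCov}(X)=\mathrm{Cov}(X)$. By Corollary~2.16, $\tilde{X}_{{\pi}_1^{gc}(X,x_0)}$ is the universal object in the category $\mathrm{GCov}(X)$, so the categorical equality transports it to a universal object of $\mathrm{Cov}(X)$. By the definition recalled after Remark~2.15 (from \cite[Definition 2.4]{15}), the existence of a universal object in $\mathrm{Cov}(X)$ is exactly what it means for $X$ to be coverable.

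I do not expect any serious obstacle: both conclusions are almost immediate once Proposition~4.6 and Corollary~4.7 are in hand, and the only subtle point is to keep in mind that ${\pi}_1^{sp}(X,x_0)$ is defined as an \emph{intersection} over open covers, so bounding $\pi(\mathcal{U},x_0)$ for one cover $\mathcal{U}$ suffices to bound ${\pi}_1^{sp}(X,x_0)$ from above.
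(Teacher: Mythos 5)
Your argument is correct and follows exactly the route the paper intends: the corollary is stated without proof as an immediate consequence of Theorem~2.5 (for ${\pi}_1^{gc}\le{\pi}_1^{sp}$), Proposition~4.6 with Spanier's characterization (for the reverse inclusion via a single cover $\mathcal{U}$ with $\pi(\mathcal{U},x_0)\le{\pi}_1^{gc}(X,x_0)$), and Corollaries~4.7 and~2.16 (for coverability). Your write-up simply supplies the details the paper leaves implicit, and all steps check out.
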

\begin{corollary} Every semilocally $H$-connected space  is semilocally path H-connected.
\end{corollary}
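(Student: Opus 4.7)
The plan is to unwind Definitions 4.1(ii) and 4.1(iii) and observe that the conclusion then follows directly. First I would fix an arbitrary path $\alpha$ in $X$ beginning at $x_0$ and set $x := \alpha(1)$. The hypothesis of semilocally $H$-connectedness, applied to the pair $(x, \alpha)$, says that $X$ is semilocally $[\alpha^{-1}H\alpha]$-connected at the point $x$. Unfolding this via Definition 4.1(i), now with base point $x$ and subgroup $[\alpha^{-1}H\alpha] \leq \pi_1(X, x)$, produces an open neighborhood $U$ of $x = \alpha(1)$ satisfying $i_{*}\pi_1(U, \alpha(1)) \leq [\alpha^{-1}H\alpha]$. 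Setting $U_\alpha := U$ is precisely the defining condition in Definition 4.1(ii), so $X$ is semilocally path $H$-connected.

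A second, less direct route runs through the group-theoretic characterizations already established in this section. By Proposition 4.6, semilocally $H$-connectedness of $X$ is equivalent to $H$ being a covering subgroup of $\pi_1(X, x_0)$. Every covering subgroup is in particular a semicovering subgroup by Remark 2.15, and semicovering subgroups are exactly the open subgroups of $\pi_{1}^{qtop}(X, x_0)$ by Theorem 3.5 of \cite{6}; hence $H$ is an open subgroup of $\pi_{1}^{qtop}(X, x_0)$. Proposition 4.3 then translates this openness back into semilocally path $H$-connectedness of $X$.

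I do not expect a substantive obstacle in either route. The only point requiring care is that the two definitions range over the same data: a path $\alpha$ starting at $x_0$ is uniquely coupled with its endpoint $\alpha(1)$, so the clauses \emph{``for every path $\alpha$ beginning at $x_0$''} in Definition 4.1(ii) and \emph{``for every $x \in X$ and for every path $\alpha$ from $x_0$ to $x$''} in Definition 4.1(iii) quantify over identical objects, and the neighborhood condition imposed on a chosen open set around $\alpha(1)$ is literally the same in both clauses. Consequently, the first route really is a direct unpacking, and the second route only serves as a conceptual double-check connecting the statement to Propositions 4.3 and 4.6.
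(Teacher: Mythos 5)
Your first route is correct and is exactly the (unstated, immediate) argument the paper intends: the corollary is given without proof precisely because unwinding Definition 4.1(iii) via 4.1(i) at the point $x=\alpha(1)$ hands you the neighborhood $U_\alpha$ required by Definition 4.1(ii). Your second route through Propositions 4.3 and 4.6 and Remark 2.15 is also valid under the paper's standing hypotheses (connected, locally path connected). One caution, though, about your closing remark that the two defining clauses are ``literally the same'': if that were so, the corollary would be an equivalence, which would force every open subgroup of $\pi_1^{qtop}(X,x_0)$ to be a covering subgroup and collapse the distinction between Propositions 4.3 and 4.6 (equivalently, between $\mathrm{SCov}(X)$ and $\mathrm{Cov}(X)$). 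The intended reading of Definition 4.1(iii) is that the neighborhood $U$ of $x$ is chosen uniformly, independent of the path $\alpha$ from $x_0$ to $x$ --- that uniformity is what lets the proof of Proposition 4.6 assemble an open cover $\mathcal{U}$ indexed by points with $\pi(\mathcal{U},x_0)\le H$, whereas in Definition 4.1(ii) the set $U_\alpha$ may genuinely depend on $\alpha$ and one only obtains a path Spanier group inside $H$. Since (ii) is the weaker, path-dependent condition, the one-way implication you prove is unaffected; only the claim of literal identity of the two definitions should be dropped.
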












\end{document}